\theoremstyle{plain}
\newtheorem{theorem}{Theorem}[section] 
\newtheorem{lemma}[theorem]{Lemma}
\newtheorem{corollary}[theorem]{Corollary}
\newtheorem{prop}[theorem]{Proposition}
\theoremstyle{definition}
\newtheorem{example}[theorem]{Example}
\newtheorem{conj}[theorem]{Conjecture}
\newtheorem{remark}[theorem]{Remark}
\theoremstyle{remark}
\newenvironment{taggedsubequations}[1]
 {\begin{subequations}
  \def\@currentlabel{#1}
 }
 {\end{subequations}
 \ignorespacesafterend}
\newcommand{\abs}[1]{\vert {#1} \vert}
\newcommand{\defsep}{\colon}
\newcommand{\st}{\text{s.t.}}
\newcommand{\R}{\mathbb{R}}
\newcommand{\N}{\mathbb{N}}
\newcommand{\Z}{\mathbb{Z}}
\newcommand{\Q}{\mathbb{Q}}
\newcommand{\T}{\mathcal{T}}
\newcommand{\Sc}{\mathcal{S}}
\newcommand{\fa}{\text{ for all }}
\newcommand{\Pa}{\mathcal{P}}
\newcommand{\barP}{\bar{\mathcal{P}}}
\newcommand{\tildex}{\Tilde{x}}
\newcommand{\stpath}{$s$-$t$-path}
\newcommand{\stpaths}{$s$-$t$-paths}
\newcommand{\vwpath}{$v$-$w$-path}
\newcommand{\vwpaths}{$v$-$w$-paths}
\newcommand{\doutP}{\delta^+_{\barP}}
\newcommand{\dinP}{\delta^-_{\barP}}
\newcommand{\doutA}{\delta^+_{A}}
\newcommand{\dinA}{\delta^-_{A}}
\newcommand{\dinPt}{\delta^-_{\barP}(t)}
\newcommand{\xoptnom}{x^*_{\hspace{-0.3mm}\textit{nom}}}
\newcommand{\foptnom}{f^*}
\newcommand{\fnom}{f}
\newcommand{\SGM}{\eqref{static:general}}
\newcommand{\SPM}{\eqref{static:path}}
\newcommand{\SAM}{\eqref{static:edge}}
\newcommand{\foptam}{f^*_{\hspace{-0.3mm}\textit{am}}}
\newcommand{\foptpm}{f^*_{\hspace{-0.3mm}\textit{pm}}}
\newcommand{\foptgm}{f^*_{\hspace{-0.3mm}\textit{gm}}}
\newcommand{\foptpmnom}{\fnom(\xoptpm)}
\newcommand{\foptgmnom}{\fnom(\xoptgm)}
\newcommand{\xoptam}{x^*_{\hspace{-0.3mm}\textit{am}}}
\newcommand{\xoptpm}{x^*_{\hspace{-0.3mm}\textit{pm}}}
\newcommand{\xoptgm}{x^*_{\hspace{-0.3mm}\textit{gm}}}
\newcommand{\xam}{x_{\hspace{-0.3mm}\textit{am}}}
\newcommand{\xpm}{x_{\hspace{-0.3mm}\textit{pm}}}
\newcommand{\xgm}{x_{\hspace{-0.3mm}\textit{gm}}}
\newcommand{\DGM}{\eqref{dynamic:general}}
\newcommand{\DPM}{\eqref{dynamic:path}}
\newcommand{\DAM}{\eqref{dynamic:arc}}
\newcommand{\foptdam}{f^*_{\hspace{-0.3mm}\textit{dam}}}
\newcommand{\foptdpm}{f^*_{\hspace{-0.3mm}\textit{dpm}}}
\newcommand{\foptdgm}{f^*_{\hspace{-0.3mm}\textit{dgm}}}
\newcommand{\foptdpmtr}{f^*_{\hspace{-0.3mm}\textit{tr}}}
\newcommand{\foptti}{f^*_{\hspace{-0.3mm}\textit{ti}}}
\newcommand{\foptdamnd}{\fnom(\xoptdam)} 
\newcommand{\foptdpmnd}{\fnom(\xoptdpm)}
\newcommand{\foptdgmnd}{\fnom(\xoptdgm)}
\newcommand{\xoptdam}{x^*_{\hspace{-0.3mm}\textit{dam}}}
\newcommand{\xoptdpm}{x^*_{\hspace{-0.3mm}\textit{dpm}}}
\newcommand{\xoptdgm}{x^*_{\hspace{-0.3mm}\textit{dgm}}}
\newcommand{\xoptti}{x^*_{\hspace{-0.3mm}\textit{ti}}}
\newcommand{\xdam}{x_{\hspace{-0.3mm}\textit{dam}}}
\newcommand{\xdpm}{x_{\hspace{-0.3mm}\textit{dpm}}}
\newcommand{\xdgm}{x_{\hspace{-0.3mm}\textit{dgm}}}
\newcommand{\thea}{\theta_{\textit{ea}}}
\renewcommand{\epsilon}{\varepsilon}
\newcommand{\uproman}[1]{\uppercase\expandafter{\romannumeral#1}}
\DeclareMathOperator*{\argmax}{arg\,max}
\newcommand{\hs}[1]{\hspace{#1}}
\begin{document}
\title[Robust static and dynamic maximum flows]%
{Robust static and dynamic maximum flows}

\author[C. Biefel, M. Kuchlbauer, F. Liers, L. Waldmüller]%
{Christian Biefel, Martina Kuchlbauer, Frauke Liers, Lisa Waldmüller}
\date{\today}
\address[C. Biefel, M. Kuchlbauer, F. Liers, L. Waldmüller]{%
  Friedrich-Alexander-Universität Erlangen-Nürnberg,
  Germany;
  Cauerstr. 11,
  91058 Erlangen,  
  Germany}
\email{\{christian.biefel,martina.kuchlbauer,frauke.liers,lisa.waldmueller\}@fau.de}

\begin{abstract}
  We study the robust maximum flow problem and the robust maximum flow over
time problem where a given number of arcs $\Gamma$ may fail or may be delayed. 
Two prominent models have been introduced for these problems: either 
one assigns flow to arcs fulfilling weak flow conservation in
any scenario, or one assigns flow to paths where an arc failure or delay affects a 
whole path. 
We provide a unifying framework by presenting novel general models, 
in which we assign flow to subpaths. 
These models contain the known models as special cases and unify their
advantages in order to obtain less conservative robust solutions. 

We give a thorough analysis with respect to complexity of the general models.
In particular, we show that the general models are essentially NP-hard, 
whereas, e.g. in the static case with $\Gamma=1$ an optimal solution can be 
computed in polynomial time.
Further, we answer the open question about the complexity of the dynamic path model for $\Gamma=1$. 
We also compare the solution quality of the different models. 
In detail, we show that the general models have better
robust optimal values than the known models and we prove bounds on these gaps.

\end{abstract}
\keywords{robust maximum flows; network flows; flows over time; robust optimization

\makeatletter
\@namedef{subjclassname@2020}{%
	\textup{2020} Mathematics Subject Classification}
	\def\blfootnote{\gdef\@thefnmark{}\@footnotetext}
\makeatother

\subjclass[2020]{05C21, 
90C05, 
90C17, 
90C27, 
90C35
\maketitle

\section{Introduction}
In this work, we study the robust maximum flow problem in a network with arc 
failures or delays on arcs. 
An optimal solution to this problem maximizes the flow reaching a sink $t$ from a 
source $s$ in the case that at most $\Gamma$ arcs might fail or be delayed, following 
the prominent $\Gamma$-approach in robust optimization \cite{bertsimassim2003}.
The robust maximum flow problem under arc failure has been subject of 
various publications in which different models are discussed.
In \cite{aneja}, the authors present a polynomial time algorithm for
computing a solution maximizing the remaining flow when one arc fails. 
This model is extended to multiple arc failures 
by considering the \stpath ~decomposition of a flow in \cite{du2007maximum}. 
In this model, an arc failure has the effect that the flow on all paths that 
contain the failing arc is deleted. 
The goal is to find a flow and its \stpath ~decomposition such that the guaranteed remaining flow is maximized even if $\Gamma$ arcs fail. 
We call this formulation path model \SPM ~in the following.
For ${\Gamma=2}$, \cite{du2007maximum} claimed that the problem is
NP-hard. However, \cite{disser} pointed out a flaw in the arguments, 
so that the complexity of \SPM ~for fixed $\Gamma\geq 2$ is still open. 
Further, \cite{disser} show that the problem is NP-hard if $\Gamma$ 
is part of the input.
Another model for the robust maximum flow problem, which we call arc model \SAM, is given in \cite{bertsimas}:
flow is assigned to arcs and has to satisfy weak flow conservation 
at every node in every possible scenario, which makes this model quite restrictive. 
However, a great advantage of this model is that an optimal solution can be found in 
polynomial time for arbitrary $\Gamma$. 
Apart from that, \cite{bertsimas} introduce an adaptive model and also 
propose an approximation for \SPM, which was later improved in 
\cite{bertsimasRandomization}. 

Our main contribution to this setting is the introduction of a new general model 
\SGM, which combines the properties of the arc model \SAM ~and the path 
model \SPM ~and which covers them as special cases:
instead of assigning flow to arcs or \stpaths, we assign flow to subpaths. 
Therefore, \SGM ~extends our understanding of robust maximum flows and, in 
particular, on the connection between \SPM ~and \SAM, which seemed to 
be unrelated.
We extend most of the NP-hardness results for \SPM ~given in \cite{disser} 
to \SGM. 
We also show that \SGM ~remains solvable in polynomial time in 
many special cases for which \SPM ~is solvable in polynomial time, 
for example if only one arc may fail. 
Additionally, we provide comparisons 
of the robust optimal values of the three
models and present instances with specific graph structures for which \SGM ~indeed yields better robust optimal values, highlighting the relevance of the new model.

The flows in the former setting are independent of time and are thus called static flows. 
However, in many applications a time component has to be considered. 
In detail, every arc in the network is associated with a transit time, or travel time, 
the flow needs to traverse the arc.
We extend our study to these dynamic flows, in the literature often also called flows over time. 
In the nominal dynamic maximum flow problem, 
the goal is to maximize the flow that reaches the sink within a given time horizon. 
This problem was first introduced by Ford and Fulkerson in \cite{fordfulkersonDynamic}. 
For a more recent overview, we refer to e.g. \cite{skutella}.
To the best of our knowledge, there are only two publications 
on the theory of robust dynamic maximum flows.
In \cite{bertsimas}, an adaptive robust model is introduced for the case that arcs 
might fail.
In \cite{gottschalk}, one instead assumes that at most $\Gamma$ arcs are delayed. 
Clearly, if this delay is chosen to be greater than the time horizon, this corresponds to 
arc failure. 
The model in \cite{gottschalk}, which we call dynamic path model \DPM, 
is path-based and can be seen as a generalization of \SPM ~to flows over time. 
\cite{gottschalk} prove several results on the complexity of \DPM ~and they also show 
that temporally repeated flows, a solution concept in the 
nominal case, are not optimal in the robust setting but satisfy certain approximation guarantees.

In the dynamic setting, the path model \DPM ~is quite restrictive due to the strict
flow conservation and capacity constraints for any possible delay and therefore yields 
very conservative solutions as mentioned in \cite{gottschalk}.
We address this issue by introducing two new models, the dynamic arc model \DAM ~and 
the dynamic general model \DGM. 
These models are analogous extensions of their static counterparts.
Again, we show that \DGM ~contains \DPM ~and \DAM ~as special cases
and therefore is a relaxation of them.
In terms of the computational complexity of the models, we answer an open question 
from \cite{gottschalk} by proving that \DPM ~is already NP-hard for one delayed arc, 
which is in contrast to the polynomial time solvability in the static case.
Further, we clarify the complexity of the new models. 
While \DGM ~is NP-hard in all considered cases, \DAM ~is solvable in polynomial 
time regardless of the number of delayed arcs.
As in the static case, we compare the robust values of all three models
and show that \DGM ~yields the best robust optimal values on all instances
compared to the other models.
Thus, we extend the analysis of robust dynamic maximum flows to a 
similar basis as for the static case. 
\\

The robust maximum flow problem under arc failure can be seen as the opposite version of 
the network interdiction problem. 
While in the robust flow problem the maximum flow is sought under the knowledge 
that $\Gamma$ arcs will be deleted, in the network interdiction problem the task is to 
find a subset of $\Gamma$ arcs whose removal minimizes the maximum flow value in 
the remaining network, cf. \cite{wollmer,wood}, or \cite{chestnut} for recent 
hardness results.
Another related problem is the so called maximum 
multiroute flow problem, cf. \cite{kishimoto1996}. 
A solution to this problem is expressed as the conic combination of 
flows on arc-disjoint paths and 
thus the failure of an arc only destroys a certain fraction of the total flow. 
\cite{baffier14} obtain a $(\Gamma+1)$-approximation to \SPM ~with this concept.
Establishing appropriate robust flow models and clarifying 
algorithmic complexities is also relevant for the study of
more general tasks where a minimum-cost robust layout of a 
network is to be determined, cf.~\cite{mattia,cacchiani}.
\\

This paper is structured as follows:
in the next section we state necessary definitions and notation that 
we will use throughout the paper.
In \Cref{sec:static}, we consider robust static maximum flows. 
We present the two known robust maximum flow models, \SPM ~and \SAM, 
in \Cref{sec:static:models} and introduce the new model \SGM.
We prove several results on the complexity of \SGM ~in 
\Cref{sec:static_general_complexity}.
In \Cref{sec:static-bounds}, we compare the solution quality 
of the three models. 
The section on robust dynamic maximum flows, \Cref{sec:dynamic}, 
is structured similarly. 
In \Cref{sec:dynamic:models}, we discuss \DPM ~
and state the new models, \DAM ~and \DGM. 
We present complexity results for the three models in 
\Cref{sec:dynamic:complexity}. 
In \Cref{sec:dynamic:bounds}, we compare the solution quality 
of the models, 
followed by a brief discussion of possible approximation approaches for
\DGM ~in \Cref{sec:dyn:approx}.
Finally, in \Cref{sec:conclusion}, we collect open questions and discuss further
research directions.

\section{Definitions and notation}\label{sec:Definitions}
We consider a directed graph $G=(V,A,s,t,u)$ with two distinct nodes in $V$, 
a source $s$ and a sink $t$, and positive integral capacities $u_a\in\N$ for all arcs $a \in A$.
In all figures, if not stated otherwise, arc labels denote the arc name and the capacity of the arc as depicted below. 
\begin{figure}[h]
		\begin{tikzpicture}
    	\node(v) at (0,0) [circle,draw=black, thick, minimum size=6mm] {$v$};
    	\node(w) at (3,0) [circle,draw=black, thick, minimum size=6mm] {$w$};
        \draw[thick,->](v) edge node[above]{$(a,u_a)$}(w);
        \end{tikzpicture}\label{fig:arclabelsStatic}
\end{figure}

The source $s$ has no incoming arcs whereas the sink $t$ has 
no outgoing arcs and additionally there is a directed $s$-$v$-$t$-path for every node $v\in V\setminus\{s,t\}$.

In the following, we assume that at most $\Gamma\in\N$ arcs may fail or 
be delayed.
We define the resulting set of all possible scenarios by
\begin{align*}
  \mathcal{S}\coloneqq\{S\subseteq A\defsep|S|\leq \Gamma\}
\end{align*}
and its corresponding indicator set for the considered value of $\Gamma$ by
\begin{equation*}
  \Lambda \coloneqq \{z \in \{0,1\}^{|A|} \defsep 
    \sum_{a\in A} z(a) \leq \Gamma\}.
\end{equation*}
We denote the set of all simple \stpaths ~by $\mathcal{P}$ and 
the set of all subpaths in $\mathcal{P}$ by 
\begin{align*}
  \barP\coloneqq\{\text{\vwpath~}P\defsep v,w\in V,~\exists~\text{\stpath~}P'~
    \st~P\subseteq P'\}.
\end{align*}

We use the standard notation to denote the set of incoming arcs at a node $v$ by 
$\dinA(v)$ and the set of outgoing arcs by $\doutA(v)$.
Analogously, we write $P\in\doutP(v)$ or $P\in\dinP(w)$ 
if the path $P\in\barP$ starts at $v$ or ends at $w$, respectively.

We denote flow by a vector $x$. 
The dimension of $x$ depends on the currently considered model and can be the 
number of arcs, the number of \stpaths ~or the number of subpaths in the graph. 

The nominal value of a flow is the amount that enters $t$ if no arc 
fails, i.e. for the example of $x\in \R^{|\barP|}_{\geq 0}$ this is given by
\begin{align*}
  \fnom(x)\coloneqq\sum_{P\in\dinPt}x(P).
\end{align*}
The nominal maximum flow problem maximizes this value satisfying strict 
flow conservation at the nodes and capacity constraints at the arcs:
	\begin{alignat*}{2}
	\foptnom~\coloneqq~\max_{\mathclap{x\in\R^{|A|}_{\geq 0}}} \quad
		&\sum_{a\in\dinA(t)}x(a) &&\\
	\st \quad & \sum_{a\in \dinA(v)} x(a)=\sum_{a\in \doutA(v)} x(a), \qquad
		&&\forall v\in V\setminus \{s,t \}, \\
		& \quad x(a)\leq u_a, &&\forall a\in A.
	\end{alignat*}

We hence denote the nominal maximum flow value on a graph by $\foptnom$.  
Note that relaxing the strict flow conservation to weak flow conservation,
meaning that the inflow in each node can be larger than the outflow, 
does not improve the optimal value. Further, for any flow defined on arcs, an 
\stpath ~decomposition can be found in polynomial time.
For an extensive overview on flow problems and algorithms, we refer to \cite{ahuja}.

Further, we write in the following for a positive integer 
$n\in\N$, $[n]\coloneqq\{1,...,n\}$, and set
${[n]\coloneqq\emptyset \text{ for all } n \in \Z\setminus\N}$.

\section{Robust maximum flows}\label{sec:static}
In the following, we present different models for the robust maximum flow problem under arc failure, investigate their complexity and study solution quality gaps as well as the price of robustness.

\subsection{Robust flow models}\label{sec:static:models}

\subsection*{Path model (PM)}\bookmarksetup{depth=part}
One of the first maximum flow models dealing with arc failures was introduced in \cite{aneja}. 
There, the goal is to maximize the flow under the assumption that one arc may fail. 
The model was later generalized in \cite{du2007maximum} to the case of $\Gamma\geq 1$.
The overall flow is decomposed into flow on \stpaths, so that the flow variable $x$ has one entry per \stpath. 
If an arc fails, the flow on each path that contains the arc is deleted. 
The corresponding problem, which we call path model \SPM, reads as follows. 
\begin{taggedsubequations}{PM}\label{static:path}
  \begin{alignat}{2}
    \foptpm~:=~\max_{\mathclap{x\in \R^{|\Pa|}_{\geq 0}}} \quad 
    	&\sum_{P\in\Pa}x(P)-
      	\max_{S\in\Sc}\sum_{\substack{P\in\Pa:\\P\cap S\neq \emptyset}}x(P)
    	\qquad &&\\
    \st \quad &\sum_{\substack{P\in\Pa:\\a\in P}}x(P)\leq u_a, &&\forall a\in A.
  \end{alignat}
\end{taggedsubequations}
While for $\Gamma=1$, this problem is solvable in polynomial time, 
cf. \cite{aneja}, it is NP-hard to find an optimal solution if $\Gamma$ is part
of the input, cf. \cite{disser}. For fixed $\Gamma\geq 2$, the complexity of the model is unknown.

\subsection*{Arc model (AM)}\label{subsec:SAM} in 
In the arc model introduced in \cite{bertsimas}, flow is assigned to each arc 
so that the flow variable $x$ has $\abs{A}$ entries. 
Weak flow conservation has to hold at every node and for every
possible scenario of at most $\Gamma$ many arc failures. 
It is hence required that every node has non-negative outflow even if
$\Gamma$ of the incoming arcs fail. 
We call this property robust flow conservation in the following.
The corresponding arc model \SAM ~is given by

\begin{taggedsubequations}{AM}\label{static:edge}
	\begin{alignat}{2}
	  \foptam~:=~\max_{\mathclap{x\in\R^{|A|}_{\geq 0}}} \quad 
	  		&\sum_{a\in\delta^-_A(t)}x(a)
	    	-\max_{z\in\Lambda} \sum_{a\in\delta^-_A(t)}z(a)x(a) &&\\
	  \st \quad & \sum_{a\in \delta_{A}^{-}(v)} (1-z(a)) x(a) 
	      	-\sum_{a\in\delta_{A}^{+}(v)} x(a) \geq 0, \qquad
	      	&&\forall v\in V\setminus \{s,t\},z \in \Lambda,
	      	\label{static:edge-rob-constraint}\\
	   & \quad x(a) \leq u_a, &&\forall a\in A.
	\end{alignat}
\end{taggedsubequations}
\cite{bertsimas} show that \eqref{static:edge} is equivalent to a linear program, where the number of variables and constraints is polynomial in 
$|V|$ and $|A|$. Therefore, an optimal solution to \SAM ~can be computed in polynomial time.

\subsection*{General model (GM)}
We now introduce a new general model that combines the properties of \SAM ~and \SPM.
The flow is assigned to paths in the set $\barP$ of all possible subpaths of \stpaths. 
As in the path model, we assume that the flow on all paths that contain a failing arc 
is deleted. 
Additionally, we require weak flow conservation and hence non-negative outflow at
every node in every possible scenario as in the arc model, i.e. robust flow 
conservation as in \SAM. 
The general model \SGM ~reads as follows.

\begin{taggedsubequations}{GM}\label{static:general}
  \begin{alignat}{2}
    \foptgm~:=~\max_{\mathclap{x\in\R^{|\barP|}_{\geq 0}}} \qquad
    	&\sum_{\mathclap{P\in\dinPt}} ~~~x(P)-\max_{S\in\mathcal{S}}
      		\sum_{\substack{P\in\dinPt:\\P\cap S\neq\emptyset}}x(P) &&\\
    \st \qquad 
    	& \sum_{\mathclap{P\in \delta_{\barP}^{-}(v)}} ~~~x(P) 
	      -\sum_{\substack{P\in\delta_{\barP}^{-}(v):\\P\cap S\neq\emptyset}}x(P) 
	      \geq \sum_{P\in\delta_{\barP}^{+}(v)} x(P), ~\quad
	    &&\forall v\in V\setminus \{s,t \}, S\in\mathcal{S},
	      \label{static:general-rob-constraint}\\
    	& \sum_{\mathclap{\substack{P\in\barP:\\a\in P}}}~~~x(P)\leq u_a, 
    	&&\forall a\in A.
    	  \label{static:general-capacity}
  \end{alignat}
\end{taggedsubequations}
\addtocounter{equation}{-3}

Given a feasible solution $x$ to \eqref{static:general}, 
we denote its objective value 
by $f_R(x)$. 
We denote an optimal solution by $\xoptgm$ with optimal value $\foptgm=f_R(\xoptgm)$. 
We observe that any feasible solution $\xpm$ to \SPM ~can be easily 
extended to a feasible solution to \SGM ~by setting 
$$\xgm(P):=\begin{cases} \xpm(P), &P\in\Pa,\\0, &\text{else.} \end{cases}$$
For ease of notation, we will 
assume that $\xpm\in\R^{|\Pa|}$ 
when we refer to \eqref{static:path} and $\xpm\in\R^{|\barP|}$ when we refer to 
\eqref{static:general}. It follows that $\foptpm=f_R(\xoptpm)$, 
where $\xoptpm$ denotes an optimal solution to \SPM. 
Similarly, all feasible solutions to \SAM ~can be extended to feasible 
solutions to \SGM. 
Again, we have $\foptam=f_R(\xoptam)$, where $\xoptam$ denotes an optimal solution 
to \SAM.
It follows that the new model \SGM ~in fact is a generalization of 
the two previous models since they are special cases in which the solutions 
are restricted to \stpaths ~or to paths consisting of single arcs, respectively. 

By adding an additional variable $\lambda$, the max-min-problem \eqref{static:general} can
be written as the linear program
\begingroup
  \allowdisplaybreaks	
\begin{subequations}\label{static:general-lambda}
  \begin{alignat}{3}
    \foptgm~=~\max_{\mathclap{x\in\R^{|\barP|}_{\geq 0},\lambda\in\R}} 
      ~~\qquad &\sum_{\mathclap{P\in\dinPt }}\quad && x(P)-\lambda &&
      \label{static:general-lambda-obj}\\
    \st~~\qquad & \sum_{\mathclap{\substack{P\in\dinPt :\\P\cap S\neq\emptyset}}}
    	&& x(P)-\lambda \leq 0, \qquad
    	&&\forall S\in\mathcal{S},
    	\label{static:general-lambda-lambdaconstr}\\
    \quad & \sum_{\mathclap{P\in \delta_{\barP}^{-}(v)}} && x(P) 
	      -\sum_{\substack{P\in\delta_{\barP}^{-}(v):\\P\cap S\neq\emptyset}}x(P) 
	      \geq \sum_{P\in\delta_{\barP}^{+}(v)} x(P), ~\quad
	    &&\forall v\in V\setminus \{s,t \}, S\in\mathcal{S},
	    \label{static:general-lambda-robconstr}\\
    \quad & \sum_{\substack{P\in\barP:\\a\in P}}  && x(P)\leq u_a, 
    	&&\forall a\in A.
    	\label{static:general-lambda-cap}
  \end{alignat}
\end{subequations}
\endgroup
For an optimal solution $(x^*,\lambda^*)$ to \eqref{static:general-lambda}, 
it holds that $x^*$ is an optimal solution to \SGM ~and 
${\lambda^*=\max_{S\in\mathcal{S}}\sum_{P\in \dinPt :P\cap S \neq\emptyset}x^*(P)}$ is the amount of flow that is deleted in the worst case.
Clearly, the same rewriting can also be applied to \SPM ~and \SAM.


\bookmarksetup{depth=subsection}
\subsection{Complexity of \SGM}
\label{sec:static_general_complexity}
We now analyze the complexity of \SGM ~for different choices of $u$ and $\Gamma$. 
We start with the case $\Gamma = 1$, in which only one arc may fail, and prove polynomial time solvability.
Further, we extend several hardness results from \cite{disser} to our setting. Thereby, we exploit the fact that \SPM ~is a special 
case of the general model.  
An overview of the complexity of the three robust flow models can be found 
in \Cref{tab: Complexity overview static}.
\begin{table}[h]
\centering
\resizebox{1\textwidth}{!}{
\begin{tabular}{l|l|l|l}
                & Arc Model \SAM & Path Model \SPM & General Model \SGM \\
    \hline & & &\\
    $\Gamma=1$ 
    	& poly time \cite{bertsimas} 
    	& poly time \cite{aneja} 
    	& poly time (Thm. \ref{thm:static:general-gamma1})\\
    & & & \\
    fixed $\Gamma\geq 2$ 
    	& poly time \cite{bertsimas} & ? & ? \\
    & & & \\
    $\Gamma$ arb. 
    	& poly time \cite{bertsimas} 
    	& strongly NP-hard \cite{disser} 
    	& strongly NP-hard (Thm. \ref{thm:static:general-gammainput}) \\
    & & & \\
    $\Gamma$ arb., $u\equiv 1$ 
    	& poly time \cite{bertsimas} 
    	& poly time \cite{disser} 
    	& poly time (Prop. \ref{lem:SGM_u=1_poly})\\
    & & & \\
    $\Gamma$ arb., $u\in \{1,\infty\}$ 
    	& poly time \cite{bertsimas} 
    	& NP-hard \cite{disser} 
    	& NP-hard (Prop. \ref{lem:SGM_capacity1infty_NP})
\end{tabular}}\vspace{0.3cm}
    \caption{Overview of the complexity results of the 
    			robust static flow models.}
    \label{tab: Complexity overview static}
\end{table}

We first state an auxiliary result, which we use in the subsequent complexity proofs.
The result formalizes the following observation: 
if the number of incoming arcs of a node $w\in V$ is not larger than $\Gamma$, 
then there is a scenario in which all incoming arcs of~$w$ fail. 
Hence, in a feasible solution to \SGM, no flow can be sent on paths starting 
at~$w$ due to robust flow conservation \eqref{static:general-rob-constraint}. 
As a consequence, all flow sent on paths ending at $w$ can be discarded without 
reducing the robust flow value. 
\begin{lemma}\label{cor:static:aux-lemma}
	Let $|\delta_{A}^-(w)| \leq \Gamma$ for a node $w\in V\setminus\{s,t\}$ 
	and let $x$ be a feasible solution to \SGM. 
	Then, $x(P)=0$ for all $P\in\doutP(w)$.
	Furthermore, if $x(P)>0$ for some $P\in\dinP(w)$, then the solution $x'$ defined by
	\begin{align*}
	x'(P):=\begin{cases}
	0, &P\in\delta_{\barP}^-(w),\\
	x(P), &P\in\barP\setminus\delta_{\barP}^-(w),
	\end{cases}
	\end{align*}
	is also feasible to \SGM ~and has the same robust flow value as $x$.
\end{lemma}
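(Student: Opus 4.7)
The plan is to treat the two assertions separately, exploiting the special scenario in which every incoming arc of $w$ fails.

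For the first assertion, since $|\delta_A^-(w)|\leq \Gamma$, the set $S\in\mathcal{S}$ can be chosen to contain $\delta_A^-(w)$. Every subpath $P\in\delta_{\barP}^-(w)$ necessarily enters $w$ through some arc in $\delta_A^-(w)\subseteq S$, so $P\cap S\neq\emptyset$ for all such $P$. Instantiating the robust flow conservation constraint \eqref{static:general-rob-constraint} at $w$ with this $S$, the left-hand side collapses to zero, which forces
\[
\sum_{P\in\delta_{\barP}^+(w)} x(P)\leq 0,
\]
and non-negativity of $x$ then gives $x(P)=0$ for every $P\in\delta_{\barP}^+(w)$.

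For the second assertion I would check feasibility of $x'$ and equality of the robust objectives. Capacity constraints \eqref{static:general-capacity} are preserved because $x'\leq x$ coordinatewise. The robust flow conservation at $w$ itself holds trivially: by construction the inflow side is zero, and by the first assertion the outflow side is zero as well. At any other node $v\in V\setminus\{s,t,w\}$ the key observation is that a subpath in $\delta_{\barP}^-(w)$ can only influence the flow conservation at $v$ through the outflow term $\sum_{P\in\delta_{\barP}^+(v)} x(P)$, namely when such a subpath happens to start at $v$; the inflow terms $\sum_{P\in\delta_{\barP}^-(v)}x(P)$ and $\sum_{P\in\delta_{\barP}^-(v):P\cap S\neq\emptyset}x(P)$ only involve subpaths that \emph{end} at $v\neq w$ and are therefore unchanged. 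Hence, in passing from $x$ to $x'$ the left-hand side of \eqref{static:general-rob-constraint} at $v$ is preserved while the right-hand side can only decrease, so the constraint remains valid for every $S\in\mathcal{S}$.

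Finally, both the nominal term $\sum_{P\in\dinPt} x(P)$ and the worst-case loss $\max_{S\in\mathcal{S}}\sum_{P\in\dinPt\colon P\cap S\neq\emptyset}x(P)$ sum exclusively over subpaths ending at $t$. Since $w\neq t$, none of the zeroed-out paths contributes to either expression, yielding $f_R(x')=f_R(x)$. The only delicate point in the argument is the bookkeeping at the intermediate nodes on a zeroed-out subpath: one has to remember that in this subpath formulation a path $P\in\delta_{\barP}^-(w)$ is an indivisible unit and therefore appears only on the outflow side at its start node and on the inflow side at $w$, but in no constraint at any node it merely traverses. Once this is observed, the verification is routine.
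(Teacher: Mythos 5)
Your proposal is correct and follows exactly the argument the paper intends for this lemma: instantiate the robust flow conservation constraint \eqref{static:general-rob-constraint} at $w$ with the scenario $S=\delta_A^-(w)$ to force zero flow on paths starting at $w$, then observe that zeroing the flow on paths ending at $w$ only decreases right-hand sides of the remaining constraints and leaves the objective (which involves only paths ending at $t\neq w$) untouched. Your remark that a subpath contributes to the conservation constraint only at its start and end nodes, not at nodes it merely traverses, is precisely the bookkeeping point that makes the verification go through, so nothing is missing.
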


As mentioned above, the in general NP-hard path model is solvable in polynomial time 
for $\Gamma=1$. 
Although the general model at first glance seems more complex considering the set of variables and constraints, 
we now show that \SGM ~lies in the same complexity class and hence is solvable in polynomial time for $\Gamma=1$.

\begin{theorem}\label{thm:static:general-gamma1}
 For $\Gamma=1$, the optimal value of \SGM ~can be computed by solving the 
 following linear program of polynomial size
  \begingroup
  \allowdisplaybreaks	
  \begin{subequations}\label{eq:static-gamma1lp}
    \begin{align}
      \foptgm ~=\quad \max_{\substack{y\in\R^{|V|^2|A|}_{\geq 0},\\\nu\in \R}} \quad
      	& \sum_{a\in A}\sum_{v\in V} y(a,v,t)-\nu
      	\label{eq:static-gamma1lp-obj}\\[3pt]
      \st \quad &\sum_{v\in V}y(a,v,t) - \nu \leq 0, 
      	&& \forall a\in A,
        \label{eq:static-gamma1lp-nu}\\
      &\sum_{\mathclap{a\in\dinA(v')}}\quad\sum_{v\in V}y(a,v,v')
      	-\sum_{v\in V}y(a',v,v')	
      	&&\nonumber\\
      &\qquad\quad \geq \sum_{a\in\doutA(v')}\sum_{w\in V}y(a,v',w), 
        &&\forall v'\in V, a'\in A,
        \label{eq:static-gamma1lp-robconst}\\
      &\sum_{\mathclap{a\in\dinA(v')}}\quad y(a,v,w)
      	=\sum_{a\in\doutA(v')}y(a,v,w), \quad
        &&\forall v'\in V\setminus\{s,t\}, v,w \in V\setminus\{v'\},
        \label{eq:static-gamma1lp-strict}\\
      &\sum_{v\in V}\sum_{w\in V}y(a,v,w)\leq u_a, 
      	&& \forall a\in A.
        \label{eq:static-gamma1lp-cap}
    \end{align}
  \end{subequations}
  \endgroup
  
  Given a feasible solution $(y,\nu)$ to \eqref{eq:static-gamma1lp}, a
  feasible solution to \SGM ~with same objective value can be computed in polynomial
  time. 
  
  Hence, \SGM ~can be solved in time polynomial in the size of the 
  graph $G$ if $\Gamma=1$.
\end{theorem}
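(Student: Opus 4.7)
The plan is to establish the equivalence between the compact LP \eqref{eq:static-gamma1lp} and \SGM\ with $\Gamma=1$ through two constructions: an aggregation that turns any feasible subpath-flow $x$ for \SGM\ into a feasible $(y,\nu)$ for \eqref{eq:static-gamma1lp} of the same objective value, and a flow decomposition that reverses the aggregation. The polynomial-time conclusion will then follow because \eqref{eq:static-gamma1lp} has $O(|V|^2|A|)$ variables and polynomially many constraints, and the decomposition itself runs in polynomial time.

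For the forward direction, I would set $y(a,v,w):=\sum_{P\in\barP\colon P\text{ is a }v\text{-}w\text{-path},\,a\in P}x(P)$ and $\nu:=\max_{a\in A}\sum_{v\in V}y(a,v,t)$, and then check the LP constraints one by one. Constraint \eqref{eq:static-gamma1lp-strict} holds because every simple $v$-$w$-subpath through an intermediate node $v'$ enters via exactly one arc of $\dinA(v')$ and leaves via exactly one arc of $\doutA(v')$, so both sides count the same paths with the same weights. Constraint \eqref{eq:static-gamma1lp-robconst} at $(v',a')$ is precisely \eqref{static:general-rob-constraint} at $v'$ for the scenario $S=\{a'\}$, obtained by splitting $\sum_{P\in\dinP(v')}x(P)$ into paths containing $a'$ and paths not containing $a'$. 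Finally, \eqref{eq:static-gamma1lp-nu} follows directly from the definition of $\nu$ together with \eqref{static:general-lambda-lambdaconstr}, and \eqref{eq:static-gamma1lp-cap} is a restatement of \eqref{static:general-capacity}.

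For the backward direction, given a feasible $(y,\nu)$, constraint \eqref{eq:static-gamma1lp-strict} tells us that for each ordered pair $(v,w)\in V\times V$ the vector $\bigl(y(a,v,w)\bigr)_{a\in A}$ is a flow on arcs that conserves mass at every intermediate node. Standard flow decomposition therefore writes it as a nonnegative combination of $v$-$w$-paths plus possibly cycles; discarding the cyclic parts can only decrease the aggregated arc usage, so the capacity bounds are preserved. Summing the resulting weights across all pairs $(v,w)$ yields an $x\in\R^{|\barP|}_{\geq 0}$ for which the same algebraic identities used in the forward direction show \eqref{static:general-capacity} and, for every single-arc scenario $S=\{a'\}\in\Sc$, the robust constraint \eqref{static:general-rob-constraint}. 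The $\nu$-constraint translates into the scenario-bound on deleted flow, so that the robust value of $x$ matches the LP value.

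The main obstacle will be the careful bookkeeping of interior arc failures: when the failed arc $a'$ is not incident to the node $v'$ at which robust conservation is enforced, the affected subpath flow through $v'$ must still be fully captured by $\sum_{v\in V}y(a',v,v')$ in \eqref{eq:static-gamma1lp-robconst}, regardless of where $a'$ sits on the corresponding $v$-$v'$-subpaths. Here \Cref{cor:static:aux-lemma} can be invoked to justify, without loss of generality, removing subpath flow starting at nodes $w$ with $|\dinA(w)|\leq 1$, which streamlines the translation between $x$ and $y$. Once these identifications are in place, the remainder is algebraic manipulation, and polynomial-time solvability of the LP via any standard polynomial-time linear programming method, combined with polynomial-time flow decomposition, yields the desired polynomial-time algorithm for \SGM.
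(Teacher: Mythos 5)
Your proposal takes essentially the same route as the paper's proof: the forward direction aggregates the subpath flow into $y(a,v,w)$ (with $\nu$ equal to the worst-case deleted inflow at $t$, equivalently the $\lambda$ of the paper's reformulation) and verifies the constraints exactly as the paper does, while the backward direction uses the same per-pair flow decomposition of $y(\cdot,v,w)$ into simple paths and discarded cycles, and the polynomial-time conclusion rests on the same LP-size and decomposition arguments. The only cosmetic difference is your invocation of \Cref{cor:static:aux-lemma}, which the paper's argument does not need.
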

\begin{proof}
  First, let $(x,\lambda)$ be a feasible solution to problem  
  \eqref{static:general-lambda}, which is the linear programming
  equivalent of \SGM.
  We now construct a feasible solution $(y,\nu)$ to \eqref{eq:static-gamma1lp} 
  with same objective value.
  We set $\nu\coloneqq \lambda$ and 
  \begin{align*}
    y(a,v,w)\coloneqq \sum_{\substack{P\in(\doutP(v)\cap\dinP(w)):\\a\in P}}x(P) 
      \quad \forall a\in A, v,w \in V,
  \end{align*}
  i.e. $y(a,v,w)$ denotes the total flow on $a$ of paths starting at $v$ and ending
  at $w$.
  Thus, for any $a\in A$ we have 
  \begin{equation}\label{eq:gamma1lp_proof_1}
  \sum_{v\in V}y(a,v,t)= \sum_{\substack{P\in\dinP(t):\\a\in P}}x(P),
  \end{equation}
  and hence, $(x,\lambda)$ for \eqref{static:general-lambda} and $(y,\nu)$ 
  for \eqref{eq:static-gamma1lp} have the same objective value.
  Furthermore, 
  \eqref{static:general-lambda-lambdaconstr} implies \eqref{eq:static-gamma1lp-nu}.
  Additionally, in \eqref{eq:gamma1lp_proof_1} we can replace $t$ by any node
  $v'\in V$ and thus robust flow conservation \eqref{static:general-lambda-robconstr}
  for $x$ implies \eqref{eq:static-gamma1lp-robconst} for $y$. 
  Strict flow conservation \eqref{eq:static-gamma1lp-strict} for $y(\cdot,v,w)$
  holds due to the fact that $x$ also fulfills strict flow conservation within
  each path from $v$ to $w$. 
  Finally, $y$ is non-negative since $x$ is non-negative and $y$ fulfills the capacity 
  constraint \eqref{eq:static-gamma1lp-cap} due to \eqref{static:general-lambda-cap}. 
  Thus, $(y,\nu)$ is feasible for
  \eqref{eq:static-gamma1lp} with the same objective value as $(x,\lambda)$ for 
  \eqref{static:general-lambda}.
 
  On the other hand, let $(y,\nu)$ now be an arbitrary feasible solution 
  to \eqref{eq:static-gamma1lp}.
  Constraint \eqref{eq:static-gamma1lp-strict} ensures that $y(\cdot,v,w)$ is
  a flow from $v$ to $w$ satisfying strict flow conservation. 
  Therefore, there exists a decomposition of this $v$-$w$-flow into flow on 
  cycles and simple $v$-$w$-paths. 
  Let $x\in\R^{|\barP|}$ be the union of these path decompositions, for all pairs 
  $v,w\in V$, where we discard flow on cycles. Further, we set $\lambda\coloneqq\nu$. 
  We now show that $(x,\lambda)$ is feasible for \eqref{static:general-lambda}
  with the same objective value as $(y,\nu)$ in \eqref{eq:static-gamma1lp}. 
  From the definition of $x$ as path decomposition 
  it follows that $x$ is non-negative and fulfills the 
  capacity constraint \eqref{static:general-lambda-cap}.
  Furthermore, for any $a\in A$ we have 
  $\sum_{P\in\dinP(t):a\in P}x(P)=\sum_{v\in V}y(a,v,t)$ and therefore
  $\sum_{P\in\dinP(t)}x(P)=\sum_{a\in\dinA(t)}\sum_{v\in V}y(a,v,t)$. 
  Thus, $(x,\lambda)$ has the same objective value as $(y,\nu)$. 
  \eqref{static:general-lambda-lambdaconstr} holds due to 
  \eqref{eq:static-gamma1lp-nu}.
  Finally, due to strict flow conservation of $y(\cdot,v,w)$ for every pair 
  $v,w\in V$, for any node $v'\in V\setminus\{s,t\}$ it holds that 
  \begin{align*}
    \sum_{P\in\dinP(v')}x(P)-\sum_{P\in\doutP(v')}x(P)=
    \sum_{a\in\dinA(v')}\sum_{v\in V}y(a,v,v')
    -\sum_{a\in\doutA(v')}\sum_{w\in V}y(a,v',w).
  \end{align*}
  Additionally, by definition of $x$, for any $a\in A$ we have 
  $\sum_{v\in V}y(a,v,v')=\sum_{P\in\dinP(v'):a\in P}x(P)$ so that 
  \eqref{eq:static-gamma1lp-robconst} implies 
  \eqref{static:general-lambda-robconstr}. 
  Hence, $(x,\lambda)$ is feasible for \eqref{static:general-lambda}. 
  
  Summarizing, for every feasible solution $(y,\nu)$ to \eqref{eq:static-gamma1lp}
  there exists a feasible solution $(x,\lambda)$ to \eqref{static:general-lambda}
  with same objective value and vice versa. 
  Thus, \eqref{eq:static-gamma1lp} has an optimal value of $\foptgm$.
  Since a path decomposition of a flow can be computed in polynomial time, 
  we can furthermore compute an optimal solution $\xoptgm$ to \SGM ~in polynomial
  time given a feasible solution $(y,\nu)$ to \eqref{eq:static-gamma1lp}.
\end{proof}

In addition to the complexity of \SGM ~for $\Gamma=1$, we next show that in this 
case there is a nominal optimal solution which is robust optimal. 
A similar fact has been shown for adaptive robust flows in \cite{bertsimas}.

\begin{theorem}\label{lemma:static:gamma1-maxflow}
  For $\Gamma=1$, there is an optimal solution $\xoptgm$ to \SGM ~which is also 
  nominal optimal, i.e. $\fnom(\xoptgm)=\foptnom$.
\end{theorem}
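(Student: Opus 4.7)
The plan is to pick an optimal solution $x^*$ of \SGM that maximizes $\fnom(x^*)$ among all \SGM-optima, and to show $\fnom(x^*) = \foptnom$. I would proceed by contradiction: suppose $\fnom(x^*) < \foptnom$.

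First I would consider the induced arc flow $x^{*A}(a) := \sum_{P \in \barP,\, a \in P} x^*(P)$. It respects the capacity bounds $u_a$, has inflow at $t$ equal to $\fnom(x^*)$, and satisfies weak flow conservation at every interior node (which follows from the $S=\emptyset$ case of the robust flow conservation \eqref{static:general-rob-constraint} applied to $x^*$). Since $\fnom(x^*) < \foptnom$, standard max-flow theory guarantees an $s$-$t$-augmenting path in the residual graph of $x^{*A}$. The technical core, and the main obstacle, is to argue that this augmenting path may be taken to be \emph{forward-only}, meaning a path in $G$ itself on which every arc has residual capacity $u_a - x^{*A}(a) > 0$. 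I would do this via the LP reformulation \eqref{eq:static-gamma1lp} from \Cref{thm:static:general-gamma1}: the freedom to redistribute flow across $(v,w)$-pairs in the variables $y(a,v,w)$ means that any backward-arc component of an augmenting path can be absorbed by re-routing existing $(v,w)$-flows (a backward arc on arc $a$ corresponds to shortening a $v$-$w$-flow through $a$), leaving only a forward $s$-$t$-path to be added.

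Given a forward-only augmenting $s$-$t$-path $P^+$ with residual capacity $\epsilon>0$, I would define $\tilde x$ from $x^*$ by adding $\epsilon$ units of subpath flow to the $s$-$t$-subpath $P^+$ and leaving all other entries unchanged. Feasibility is preserved: capacity \eqref{static:general-capacity} holds by the residual-capacity bound on $P^+$, and the robust flow conservation \eqref{static:general-rob-constraint} at any interior node $v \in V \setminus \{s,t\}$ is unaffected because an $s$-$t$-path $P^+$ satisfies $P^+ \not\in \delta^\pm_{\barP}(v)$.

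Finally, let $\nu^* := \max_a \sum_{P \in \delta^-_{\barP}(t),\, a \in P} x^*(P)$, attained at some arc $a^*$. If $a^* \in P^+$, the loss on $a^*$ grows by exactly $\epsilon$, and since each other arc's loss grows by at most $\epsilon$ from a baseline of at most $\nu^*$, the new worst-case loss is exactly $\nu^* + \epsilon$; hence $f_R(\tilde x) = f_R(x^*) = \foptgm$ while $\fnom(\tilde x) = \fnom(x^*) + \epsilon$, contradicting the maximality of $\fnom(x^*)$ among optima. If $a^* \not\in P^+$, then for $\epsilon$ small enough no other arc's loss can overtake $\nu^*$, so the worst-case loss of $\tilde x$ stays at $\nu^*$; thus $f_R(\tilde x) = \foptgm + \epsilon$, contradicting optimality of $\foptgm$. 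Either way we obtain a contradiction, so some optimal $x^*$ satisfies $\fnom(x^*) = \foptnom$.
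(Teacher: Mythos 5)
There is a genuine gap, and it sits exactly where you locate "the technical core". First, your starting point is already unjustified: the induced arc flow $x^{*A}$ only satisfies \emph{weak} flow conservation (inflow may strictly exceed outflow at interior nodes), and for such a flow the standard statement ``value at $t$ below $\foptnom$ implies an augmenting $s$-$t$-path in the residual graph'' is simply false. For instance, if subpath flow saturates every arc into an intermediate node $v$ while no flow leaves $v$ towards $t$, every residual $s$-$t$-path is blocked even though the value arriving at $t$ is far below the maximum. You cannot rescue this by invoking the (contradiction) hypothesis that $x^*$ has maximal nominal value among optima without an argument, and ``standard max-flow theory'' does not provide one. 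This is precisely why the paper's proof does \emph{not} take the residual graph of the whole flow: it splits $x^*=g+h$, where $g$ is the flow on paths in $\dinPt$, and argues (via a cut argument that works because $g$ has only deficits, not excesses, at interior nodes) that an augmenting path exists in the residual graph of $g$; the arcs of that path that are blocked in the residual graph of $x^*$ are exactly those carrying $h$-flow.

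Second, the step ``any backward-arc component can be absorbed by re-routing existing $(v,w)$-flows, leaving only a forward $s$-$t$-path'' is asserted, not proved, and it is the entire difficulty. Shortening or re-routing a subpath flow through an arc changes the endpoint of that subpath, hence decreases the inflow at its original end node $w$, which can violate robust flow conservation \eqref{static:general-rob-constraint} at $w$; the paper's proof repairs exactly this by concatenating an arc-disjoint incoming path at $w$ with an outgoing path and shifting $\epsilon$ flow onto the concatenation, and it must in addition verify that the amount of to-$t$ flow on every single arc grows by at most $\epsilon$, so that the worst-case loss cannot grow faster than the nominal gain. (If the backward arc carries flow of a path ending at $t$, re-routing it also changes the nominal value, which needs separate care.) The appeal to the LP \eqref{eq:static-gamma1lp} does not supply any of this: its variables $y(a,v,w)$ still have to satisfy the robust conservation constraint \eqref{eq:static-gamma1lp-robconst}, and no result in the paper converts a residual augmenting path into a forward-only one. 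Your final feasibility and objective computation for a genuinely forward-only path $P^+$ is essentially correct (up to the minor point that the maximum loss may be attained by several arcs, in which case your two cases should be ``some maximizing arc lies on $P^+$'' versus ``none does''), but it rests on the unproven reduction; as written, the proof does not go through.
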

The proof of \Cref{lemma:static:gamma1-maxflow} is quite technical and can be found in \Cref{prooflemma:static:gamma1-maxflow}.

The following result states NP-hardness of \SGM ~when 
$\Gamma$ is arbitrary.
\begin{theorem}\label{thm:static:general-gammainput}
  \SGM ~is strongly NP-hard if $\Gamma$ is part of the input.
\end{theorem}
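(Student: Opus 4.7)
The plan is to reduce from the path model \SPM with $\Gamma$ part of the input, which is strongly NP-hard by \cite{disser}. As noted just after the definition of \SGM, every feasible \SPM solution embeds canonically as a feasible \SGM solution with the same robust value, so $\foptgm \geq \foptpm$ on every instance. It therefore suffices to exhibit a polynomial-time transformation of Disser's hard \SPM instances into \SGM instances on which also $\foptgm \leq \foptpm$, whence strong NP-hardness transfers.

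The lever for the reverse inequality is \Cref{cor:static:aux-lemma}. Suppose every intermediate node $v \in V \setminus \{s,t\}$ satisfies $|\delta_A^-(v)| \leq \Gamma$. Applied at $v$, the lemma first shows $x(P) = 0$ for every $P \in \doutP(v)$, and then allows us to zero $x(P)$ on every $P \in \dinP(v)$ without reducing the robust value. Performing these replacements at all intermediate nodes leaves the support of $x$ on full s-t-paths only, and the resulting restricted solution is feasible for \SPM with the same robust value as $x$. Consequently, on any graph whose intermediate nodes all have in-degree at most $\Gamma$, one has $\foptgm = \foptpm$.

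The reduction then proceeds as follows: take a Disser instance $(G, u, \Gamma)$ and, for every intermediate node $v$ with $|\delta_A^-(v)| > \Gamma$, preprocess $G$ by an arc-splitting gadget. Concretely, subdivide the incoming arcs of $v$ through a shallow in-tree of auxiliary nodes each of in-degree at most $\Gamma$, with capacities inherited from subtree sums so as to be non-binding. The transformation preserves both the effective arc capacities and the family of simple s-t-paths through $v$. The resulting graph $G'$ therefore satisfies the hypothesis of \Cref{cor:static:aux-lemma} at every intermediate node, giving $\foptgm(G') = \foptpm(G')$, while Disser's underlying encoding guarantees $\foptpm(G') = \foptpm(G)$. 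Strong NP-hardness of \SGM with $\Gamma$ in the input follows.

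The main obstacle is the correctness of the preprocessing step: one must check that failures of gadget arcs translate back into an equivalent subset of at most $\Gamma$ failures among the original incoming arcs of $v$, so that the adversary gains no extra power, and that no subpath originating inside a gadget can carry strictly positive flow in an optimal \SGM solution on $G'$. Both properties can be arranged by choosing the auxiliary capacities large relative to the total outgoing capacity at $v$ and by a careful scenario-budget argument inside the in-tree — routine but requires care to match the scenario set $\mathcal{S}$ between $G$ and $G'$.
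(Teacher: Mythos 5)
Your first half is exactly the paper's strategy: reduce from the strongly NP-hard \SPM{} instances of \cite{disser} and use \Cref{cor:static:aux-lemma} to argue that, whenever every node other than $t$ has in-degree at most $\Gamma$, an optimal \SGM{} solution can be stripped down to flow on \stpaths{} only, so $\foptgm=\foptpm$ on such instances. The gap is in your second half. The paper does not need, and you should not introduce, any preprocessing gadget: the point is that Disser's \textsc{clique}-reduction instances \emph{already} satisfy the in-degree condition. Every node outside the special set $B\cup\{s,t\}$ has one or two incoming arcs, and every $b\in B$ satisfies $|\delta_A^-(b)|\leq 2+2|A'|\leq\Gamma$ by the choice of $\Gamma$ in that construction, so \Cref{cor:static:aux-lemma} applies directly and no graph modification is required.

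Your in-tree gadget, as described, does not work and the obstacle you flag is not ``routine''. Aggregating more than $\Gamma$ incoming arcs of $v$ through auxiliary nodes creates internal gadget arcs whose single failure deletes the flow of \emph{all} original arcs routed below them; this strictly enlarges the adversary's power in both \SPM{} and \SGM{} on $G'$, so neither $\foptpm(G')=\foptpm(G)$ nor $\foptgm(G')=\foptgm(G')\geq\foptpm(G)$ is justified, and choosing the auxiliary capacities large is irrelevant because capacities do not constrain which arcs the adversary may fail. Since the claimed equality $\foptpm(G')=\foptpm(G)$ is the load-bearing step of your reduction and is unproven (and in general false for such aggregation gadgets), the argument as written has a genuine hole; replacing the gadget step by the verification that Disser's instances already have bounded in-degree closes it and recovers the paper's proof.
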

\begin{proof}
	In \cite[Theorem 2, proof]{disser}, it is proven by a reduction from \textsc{clique}
	that \SPM ~is strongly NP-hard if $\Gamma$ is part of the input.
	We use the same construction and extend their idea to our setting:
	they construct an instance $I=(G=(V,A,s,t,u),\Gamma)$ 
	for \SPM ~from a given \textsc{clique} instance $I'=(G'=(V', A'),\Gamma')$ 
	and show that an optimal solution to \eqref{static:path} sends flow 
	on a specific arc $(v',v'')\in A$ if and only if 
	there exists a clique of size $\Gamma'$ in $G'$.
	
	Let $\xoptgm$ be an arbitrary optimal solution to \SGM ~on the instance $I$.
	In the following, we will observe that every node $v\in V\setminus\{t\}$ has less 
	than or equal to $\Gamma$ incoming arcs. 
	We will then apply \Cref{cor:static:aux-lemma} to construct an optimal solution 
	$\xoptpm$ to \SPM ~on $I$.
	First, we observe that every node $v\in V\setminus(B\cup\{s,t\})$, 
	where $B\subset V$ is defined as in \cite[Theorem 2, proof]{disser}, 
	has one or two incoming arcs.
	Second, from the construction of $G$ and the definition of $\Gamma$ as in 
	\cite[Theorem 2, proof]{disser}, we derive for the nodes $b\in B$: 
	\begin{align*}
		|\delta_{A}^-(b)| \leq 2+2|A'| \leq \Gamma.
	\end{align*}
		
	Thus, from \Cref{cor:static:aux-lemma} it follows that 
	$\xoptgm(P)=0$ for all \vwpaths ~$P$ in $G$ with $v\neq s$. 

	Additionally, it follows that we can construct a solution $x^*$ 
	\begin{align*}
	  x^*(P)\coloneqq\begin{cases}
	    \xoptgm(P), &P\in\Pa,\\
	    0, &P\in\barP\setminus\Pa,
	  \end{cases}
	\end{align*}
	with the same robust objective value as $\xoptgm$. The solution $x^*$ only 
	uses \stpaths ~and is therefore an optimal solution to \SPM.
	
	Summarizing, by solving \SGM ~on $I$, we can find in a time, which is 
	polynomial in the size of the original solution, an optimal solution to \SPM 
	~by which we can decide whether there is a clique of size $\Gamma'$ in $G'$. 
\end{proof}

\subsection*{Restrictions on the capacity \textit{u}}\bookmarksetup{depth=part}
\label{sec:static_general_special_cases}

In the following, we investigate the complexity of \SGM ~for several special cases  in which the capacity $u$ is restricted to certain values.
Again, the results are based on analogous statements in \cite{disser}.
We start with the special case in which all arcs have unit capacity.

\begin{prop}\label{lem:SGM_u=1_poly}
	Let $u \equiv 1$. 
	Then, for arbitrary $\Gamma\geq 1$, an optimal solution to \SGM ~can be determined 
	in polynomial time.
\end{prop}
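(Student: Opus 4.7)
The plan is to establish that for $u \equiv 1$ and arbitrary $\Gamma$,
\[
\foptgm \;=\; \max\{0,\ \foptnom - \Gamma\}.
\]
Since $\foptnom$ can be computed in polynomial time by a standard maximum-flow algorithm, and since Menger's theorem decomposes the nominal maximum flow into $\foptnom$ arc-disjoint \stpaths ~in a unit-capacity graph, this formula yields polynomial time solvability together with an explicit construction of an optimal $\SGM$ solution by placing one unit of flow on each of those \stpaths.

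For the lower bound $\foptgm \geq \max\{0, \foptnom - \Gamma\}$ I use that \SPM ~is a restriction of \SGM. The unit-flow solution on the $\foptnom$ arc-disjoint \stpaths ~is feasible for \SPM (and hence for \SGM), and since arc-disjointness means a failing arc hits at most one of the paths, any scenario in $\Sc$ leaves at least $\foptnom - \Gamma$ paths intact. This matches the unit-capacity \SPM ~result in \cite{disser} and gives the lower bound.

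For the upper bound, fix a feasible $x$ to \SGM, let $C$ be an $s$-$t$ minimum cut in $G$, so $|C| = \foptnom$, and pick $S \subseteq C$ with $|S| = \min\{\Gamma, \foptnom\}$. Define the restricted arc flow
\[
\tilde y(a)\;\coloneqq\;\sum_{\substack{P \in \barP:\, a \in P \\ P \cap S = \emptyset}} x(P), \qquad a \in A \setminus S.
\]
From \eqref{static:general-capacity} we get $\tilde y(a) \leq 1$. A direct computation shows that $\sum_{a \in \delta^-_A(t)}\tilde y(a)$ is exactly the surviving value $\sum_{P \in \dinPt,\, P\cap S = \emptyset} x(P)$, and that at every $v \in V \setminus\{s,t\}$ the net outflow of $\tilde y$ equals $\sum_{P \in \delta^+_{\barP}(v),\, P \cap S = \emptyset} x(P) - \sum_{P \in \delta^-_{\barP}(v),\, P \cap S = \emptyset} x(P)$, which is non-positive by applying \eqref{static:general-rob-constraint} to the scenario $S$. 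Hence $\tilde y$ is a ``weak'' $s$-$t$ flow in $G \setminus S$ with unit capacities and absorbing internal nodes, so the standard min-cut estimate gives $\sum_{P \in \dinPt,\, P \cap S = \emptyset} x(P) \leq |C \setminus S| = \max\{0, \foptnom - \Gamma\}$. Since the robust value of $x$ is the \emph{minimum} surviving value over $S \in \Sc$, our particular choice already yields $\foptgm \leq \max\{0, \foptnom - \Gamma\}$; combined with the lower bound the claimed equality follows.

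The main obstacle is the subpath-to-arc bookkeeping: a subpath $P$ contributes to $\tilde y(a)$ for every arc $a \in P$, but to the node balance of $\tilde y$ at $v$ only through its start and end vertex, with contributions at interior vertices of $P$ cancelling out. Setting up this calculation correctly and identifying both the target quantity (surviving value $=$ net inflow at $t$ of $\tilde y$) and the non-positive net outflow at internal nodes from \eqref{static:general-rob-constraint} are the core technical steps; once in place, the min-cut argument matches the lower bound and $\foptgm = \max\{0, \foptnom - \Gamma\}$ is obtained.
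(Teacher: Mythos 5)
Your proof is correct and takes essentially the same route as the paper: both establish $\foptgm=\max\{0,\foptnom-\Gamma\}$ via the minimum cut value, obtaining the lower bound from an arc-disjoint path decomposition of a nominal maximum flow and the upper bound from a cut argument, so that a nominal optimal solution is robust optimal and computable in polynomial time. The paper only sketches this by analogy to Theorem~7 of \cite{disser}, whereas you spell out the subpath-to-arc bookkeeping and the use of \eqref{static:general-rob-constraint} for the fixed scenario $S\subseteq C$ in detail, which is precisely the verification the paper leaves implicit.
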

\begin{proof}
	The proof works analogously to the proof of Theorem 7 in \cite{disser}.
	Let $C$ be the value of the minimal cut of a given instance. 
	At most $C$ units of flow can arrive at the sink in the nominal case. 
	Due to the unit capacities, at most $\Gamma$ units of flow may be deleted. 
	If $\Gamma > C$, then no flow arrives at the sink 
	in the robust case. If $C > \Gamma$, exactly $\Gamma$ of the $C$ 
	flow units are deleted and the robust flow value equals $C-\Gamma$. 
	Hence, a nominal optimal solution is also optimal for \eqref{static:general}.
\end{proof}

\begin{prop}\label{lem:SGM_capacity1infty_NP}
Let $\Gamma$ be part of the input and
let	capacities $u_a\in\{1, u_{\max}\}$ or $u_a\in\{1, \infty\}$ for all $a\in A$.	
Then, \SGM ~is NP-hard.
\end{prop}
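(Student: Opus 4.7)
The plan is to mirror the strategy used in the proof of \Cref{thm:static:general-gammainput}: start from the reduction of \cite{disser} that establishes NP-hardness of \SPM~in the corresponding capacity regimes, and then exploit \Cref{cor:static:aux-lemma} to show that any optimal solution to \SGM~on the constructed instance can be collapsed onto \stpaths~without loss, thereby producing an optimal solution to \SPM.

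More concretely, I would proceed as follows. First, I would take the instance $I=(G=(V,A,s,t,u),\Gamma)$ obtained from the corresponding reduction in \cite{disser} (with $u_a\in\{1,u_{\max}\}$ or $u_a\in\{1,\infty\}$) and examine the structure of $G$. The key structural check is whether every intermediate node $w\in V\setminus\{s,t\}$ satisfies $|\delta_A^-(w)|\leq \Gamma$; this was the crucial property exploited in \Cref{thm:static:general-gammainput}, and it holds for the \cite{disser} clique-based construction because each gadget node has a bounded number of incoming arcs, all bounded by $\Gamma$ (by the choice of $\Gamma$ in terms of the clique size and $|A'|$). If the reduction in \cite{disser} for the $\{1,\infty\}$ (respectively $\{1,u_{\max}\}$) setting does not satisfy this condition directly, I would slightly augment it by subdividing high-in-degree nodes, preserving solution values on both models.

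Second, given the structural property, let $\xoptgm$ be an arbitrary optimal solution to \SGM~on $I$. By \Cref{cor:static:aux-lemma} applied to each intermediate node $w$, we may assume $\xoptgm(P)=0$ for every $P\in\doutP(w)$ that does not start at $s$, and that rerouting/discarding flow on $\dinP(w)$ does not decrease the robust objective. Iterating this over all intermediate nodes yields a solution $x^*$ supported only on \stpaths, with $f_R(x^*)=f_R(\xoptgm)$. Hence $x^*$ is feasible for \SPM~and optimal there, so $\foptgm=\foptpm$ on $I$.

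Finally, since \SPM~is NP-hard on instances of this form by \cite{disser}, and the transformation from a solution of \SGM~to a solution of \SPM~is polynomial, NP-hardness of \SGM~follows. The main obstacle will be the bookkeeping in the first step: I need to be careful that the specific construction in \cite{disser} for the $\{1,\infty\}$ capacity case satisfies the in-degree bound $|\delta_A^-(w)|\leq \Gamma$ at every intermediate node, which is a quick but essential verification; once this holds, the rest is a direct translation of the argument used for \Cref{thm:static:general-gammainput}.
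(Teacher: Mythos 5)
There is a genuine gap, and it sits exactly at the step you yourself flag as "a quick but essential verification." The NP-hardness of \SPM~with capacities in $\{1,\infty\}$ (or $\{1,u_{\max}\}$) in \cite{disser} is not obtained by a fresh clique construction with small in-degrees; it is obtained by taking the arbitrary-capacity hardness instance and replacing every arc $a=(v,w)$ of capacity $u_a$ by an infinite-capacity arc into a new node followed by $u_a$ parallel unit-capacity arcs into $w$. This splitting gadget inflates the in-degree of each original head node $w$ from $|\delta^-_A(w)|$ to $\sum_{a\in\delta^-_A(w)}u_a$, i.e.\ to the \emph{total incoming capacity}, which is not bounded by $\Gamma$ (the original instance necessarily has capacities larger than $1$, since $u\equiv 1$ is polynomially solvable). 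So \Cref{cor:static:aux-lemma} cannot be applied at those nodes, and your collapse of an optimal \SGM~solution onto \stpaths~breaks down precisely where the capacity restriction was engineered. Your fallback of subdividing high-in-degree nodes is not an innocuous repair: the new arcs are additional failure candidates and shared bottlenecks for many paths, so they can change $\foptpm$ (and $\foptgm$) on the instance, and "preserving solution values on both models" would itself require a nontrivial argument. Without the in-degree bound you cannot conclude $\foptgm=\foptpm$ on these instances, and that equality is the whole engine of your reduction.

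The paper's proof avoids this issue by not passing through \SPM~at all. It reduces from \SGM~with \emph{arbitrary} capacities, which is already NP-hard by \Cref{thm:static:general-gammainput}, and applies the splitting construction itself to an arbitrary \SGM~instance $G$, obtaining $G'$ with capacities in $\{1,\infty\}$. \Cref{cor:static:aux-lemma} is then invoked only at the newly introduced nodes $v_a$, which have in-degree $1\leq\Gamma$, so no global in-degree hypothesis is needed; the remaining difficulty (worst-case losses at nodes whose incoming arcs have been split into many unit arcs) is handled by a separate observation that some worst-case scenario can always be chosen among the infinite-capacity arcs $a'$ only, which makes the losses on $G'$ and $G$ coincide. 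Together with the easy direction (uniformly splitting a flow on $G$ over the parallel unit arcs of $G'$), this gives equality of the \SGM~optimal values on $G$ and $G'$, and hence hardness, without ever needing $\foptgm=\foptpm$ or any structural property of the \cite{disser} clique instance beyond what \Cref{thm:static:general-gammainput} already established. If you want to rescue your route, you would have to either verify the in-degree bound for the composed \cite{disser} instance (doubtful, as argued above) or supply the worst-case-scenario argument at the high-in-degree nodes — at which point you have essentially reconstructed the paper's proof.
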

The proof uses the analogous result for the problem with arbitrary capacities, following the proof for \SPM ~in \cite{disser}. 
For this, we exploit an idea that is similar to \Cref{cor:static:aux-lemma}, 
and which we apply in a similar way as in the proof of \Cref{thm:static:general-gammainput}.
We refer to \Cref{prooflem:SGM_capacity1infty_NP}.

\subsection*{Integral flows}
\label{sec:static-complexity-integral}
So far, all flows were allowed to take fractional values. 
In the nominal maximum flow problem, there always exists an optimal flow 
with only integral values, as long as the capacities are integral, 
cf. \cite[Theorem 6.5]{ahuja}. 
Such an optimal integral flow usually can be found by standard flow algorithms, 
e.g. the well known Ford-Fulkerson algorithm, 
cf. \cite{fordfulkersonStatic}, augments the flow by an integer value in each 
iteration.
In the various robust models, however, there is not necessarily an optimal solution 
with integral values. 
\begin{figure}[h]
	\begin{tikzpicture}[scale=.9]
      \node(s) at (-3,0) [circle,draw=black, thick, minimum size=8mm] {$s$};
      \node(v) at (0,0) [circle,draw=black, thick, minimum size=8mm] {$v$};
      \node(t) at (3,0) [circle,draw=black, thick, minimum size=8mm] {$t$};
      \draw[thick,->,bend left =25](s)edge node[above]{$(a_1,2)$}(v);
      \draw[thick,->,bend right=25](s)edge node[below]{$(a_2,2)$}(v);
      \draw[thick,->,bend left=35](v)edge node[above]{$(a_3,1)$}(t);
      \draw[thick,->,bend left=-10](v)edge node[above]{$(a_4,1)$}(t);
      \draw[thick,->,bend left=-35](v)edge node[below]{$(a_5,1)$}(t);
	\end{tikzpicture}
	\caption{Instance with $\foptam=\nicefrac{4}{3}$, 
			$\foptpm=\nicefrac{3}{2}$ and $\foptgm=2$ for $\Gamma=1$.}
	\label{fig:integralflow_uleq2}
\end{figure}
For example, on the instance in \Cref{fig:integralflow_uleq2}, the unique
optimal solution to \SAM ~sends $\nicefrac{2}{3}$ units of flow on each arc
from $v$ to $t$, every optimal solution to \SPM ~uses at least two \stpaths ~with 
fractional values, and the optimal solution to \SGM ~sends $\nicefrac{1}{3}$ 
units of flow on each \stpath ~as well as on each arc from $v$ to $t$. 

Therefore, it is of interest to study the complexity of the integral models
in which we require all entries of the flows to be integral.
To the best of our knowledge, there are no results on the complexity of 
integral \SAM. 
For integral \SPM, \cite{aneja} show that computing an optimal solution is 
possible in polynomial time if $\Gamma=1$. Moreover, \cite{disser} prove that 
an optimal integral flow can be found in polynomial time for arbitrary $\Gamma$ if 
the capacities are restricted to $u_a\leq 2$ for all $a\in A$.
The complexity of integral \SGM ~remains an open question for these special cases, 
whereby we note that the proof techniques by \cite{disser} are not directly 
transferable to integral \SGM.
On the other hand, \cite{disser} also show that integral \SPM ~is NP-hard and 
that there is no $(\nicefrac{3}{2}-\epsilon)$-approximation for integral \SPM 
~even for $\Gamma=2$ and instances with $u_a\leq 3$ for all $a\in A$. 
We extend this result to integral \SGM.   
    
\begin{theorem}\label{lem:SGM_integralSolution_NP}
	 Unless P = NP, there is no $(\nicefrac{3}{2}-\epsilon)$-approximation 
	 algorithm for computing an optimal solution to integral \SGM, even 
	 when restricted to instances where $\Gamma=2$ and $u_a\leq 3$ for all 
	 $a \in A$.
\end{theorem}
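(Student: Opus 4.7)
The plan is to piggyback on the $(\nicefrac{3}{2}-\epsilon)$-inapproximability result for integral \SPM ~from \cite{disser}, in the same spirit as the proof of \Cref{thm:static:general-gammainput}. Let $I = (G=(V,A,s,t,u),\Gamma)$ be an instance produced by the reduction of \cite{disser} that witnesses the non-approximability of integral \SPM ~under the assumptions $\Gamma = 2$ and $u_a \leq 3$ for all $a \in A$. The key structural fact I would extract from that construction is that every intermediate node $w \in V\setminus\{s,t\}$ satisfies $|\delta_A^-(w)| \leq \Gamma = 2$; this is analogous to the verification carried out in the proof of \Cref{thm:static:general-gammainput}, and should be immediate from the gadgets used in \cite{disser} for the $u_a\leq 3$, $\Gamma=2$ hardness.

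Given this structural property, let $x^*$ be any feasible integral solution to \SGM ~on $I$. First, by the first statement of \Cref{cor:static:aux-lemma}, $x^*(P)=0$ for every $P \in \doutP(w)$ with $w$ intermediate, so every subpath carrying flow must start at $s$. Next, I would iterate the second statement of \Cref{cor:static:aux-lemma} over all intermediate nodes $w$: each time we set the flow on subpaths ending at $w$ to zero we obtain a new feasible solution with the same robust objective value, and crucially integrality is preserved since we only zero out entries. After finitely many such replacements we obtain an integral feasible $\tildex$ with $f_R(\tildex) = f_R(x^*)$ and $\tildex(P) = 0$ whenever $P \notin \Pa$. Restricting $\tildex$ to $\Pa$ then yields a feasible integral solution to \SPM ~on $I$ with the same robust value. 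Conversely, every integral \SPM ~solution extends to an integral \SGM ~solution of equal value, so the optimal integral values of the two models agree on every instance $I$ arising from the reduction.

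Consequently, an $(\nicefrac{3}{2}-\epsilon)$-approximation algorithm for integral \SGM ~would, when applied to such an $I$, produce an integral \SGM ~solution whose objective approximates $\foptgm^{\mathrm{int}}(I) = \foptpm^{\mathrm{int}}(I)$ within the same factor. Converting this solution to an integral \SPM ~solution by the zeroing-out procedure above does not decrease its robust value, yielding a $(\nicefrac{3}{2}-\epsilon)$-approximation for integral \SPM ~on the restricted instance class $\{\Gamma=2,\; u_a\leq 3\}$, which contradicts \cite{disser} unless $\mathrm{P}=\mathrm{NP}$.

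The main obstacle I anticipate is strictly bookkeeping rather than conceptual: one must confirm that the specific gadgets employed in the Disser inapproximability reduction (and not merely those used in the weaker hardness reductions already cited in the paper) still satisfy $|\delta_A^-(w)| \leq \Gamma$ at every intermediate node $w$. If isolated high in-degree nodes appear, a minor modification — for example, subdividing offending arcs or inflating $\Gamma$ while correspondingly padding the graph with dummy source arcs — should restore the property without changing the optimal robust values, and hence preserve the approximation gap. Once the structural condition is in place, the reduction to \SPM ~via \Cref{cor:static:aux-lemma} is entirely mechanical.
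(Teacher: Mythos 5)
Your overall strategy matches the paper's: take the instance that Disser et al.\ build from \textsc{arc-disjoint paths} for the integral \SPM{} gap (value at least $3$ versus at most $2$), show that on this instance integral \SGM{} and integral \SPM{} have the same optimal value by discarding flow on subpaths via \Cref{cor:static:aux-lemma}, and transfer the inapproximability. However, the step you call ``strictly bookkeeping'' is in fact the crux, and your claimed structural fact is false. The construction embeds the \emph{arbitrary} digraph $G'$ of the \textsc{arc-disjoint paths} instance as a subgraph (with terminals $s_1,s_2,t_1,t_2$), and the nodes of $G'$ can have in-degree far larger than $\Gamma=2$; only the handful of added nodes outside $G'$ satisfy $|\delta_A^-(w)|\leq 2$. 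So \Cref{cor:static:aux-lemma} cannot be applied verbatim at the nodes inside $G'$, and your argument that every flow-carrying subpath can be pushed back to $s$ breaks down exactly there. Your suggested repairs do not close this gap: subdividing an arc does not reduce the in-degree of its head; replacing a high in-degree node by an in-tree of degree-$2$ nodes can change both the answer to \textsc{arc-disjoint paths} (two paths meeting only at that node may be forced to share tree arcs) and the robust values (one failing tree arc now kills several original incoming arcs at once), so the cited hardness would have to be re-proved on the modified instance; and inflating $\Gamma$ is not available, since the theorem fixes $\Gamma=2$ and Disser's result is for $\Gamma=2$.

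The paper's proof gets around the unbounded in-degrees inside $G'$ precisely by exploiting integrality, which is why the remark preceding the theorem stresses that single flow units cannot be split among paths. Since the only arcs feeding $G'$, namely $(v,s_1)$ and $(s,s_2)$, have capacity $1$, an integral solution routes at most one indivisible unit through $s_1$ and one through $s_2$; hence at every node of the embedded subgraph at most two incoming paths carry flow. With $\Gamma=2$ one failing arc can be chosen on each such path, so the scenario-based argument behind \Cref{cor:static:aux-lemma} still forces the flow on subpaths starting at these nodes to zero and allows the flow on subpaths ending there to be discarded, even though the in-degree bound fails. This integrality argument (applied inside $G'$, combined with the literal in-degree argument on the added nodes) is the missing ingredient in your proposal; without it, the equality of the integral \SGM{} and \SPM{} optima on the reduction instances is not established.
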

The proof extends the corresponding proof
for integral \SPM ~from \cite{disser} and can be found in \Cref{prooflem:SGM_integralSolution_NP}.
The proof utilizes the fact that, for an integral solution, single flow units 
cannot be distributed among multiple paths, which leads to a reduction from \textsc{arc-disjoint paths}.


\bookmarksetup{depth=subsection}
\subsection{Comparison of the solution quality between the robust flow models}
\label{sec:static-bounds}
As discussed in \Cref{sec:static:models}, all feasible solutions to 
\SAM ~and \SPM ~are feasible for \SGM. 
Therefore, it obviously holds that the optimal value of the general model is always greater than or equal to the optimal value of the arc model and the path model. 
We first show
that there is no bound on how much better the optimal value of 
\SPM ~and \SGM ~can be compared to \SAM, i.e. the extent to which the arc model is surpassed by the others is unbounded.
Second, we provide a lower bound on the factor by which the arc model as well as the general model may outperform the path model. 
Thus, there are instances where the path model achieves a greater optimal value, but also those where the arc model leads to a greater one, so that neither of the two models can be considered to be better in general.

Furthermore, we conjecture that the above mentioned lower bound on the gap between 
\SPM ~and \SGM ~is tight.
In \Cref{Prop:gapSGMSPMupperboundGamma1}, we prove this for the special case that $G$ is a DAG and only one arc may fail, i.e. $\Gamma=1$.

\begin{prop}\label{Prop:gapSPM_SAM}
  For any $\Gamma\in \N$ and $\alpha\in\R$ there are instances such that 
  $\foptpm>\alpha \foptam$ and consequently $\foptgm>\alpha \foptam$.
\end{prop}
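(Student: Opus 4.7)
The plan is to construct, for each $k \in \N$, an instance $G_k$ on which $\foptpm \geq 1$ while $\foptam \leq (\Gamma+1)^{-(k-1)}$, so that for any prescribed $\alpha$ the choice $k > 1 + \log_{\Gamma+1}\alpha$ gives $\foptpm > \alpha\foptam$. The corresponding statement for \SGM then follows from the observation, recorded below the definition of \SGM, that every feasible \SPM solution extends to a feasible \SGM solution of equal robust value, whence $\foptgm \geq \foptpm > \alpha\foptam$.

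The instance $G_k$ will be the chain $s = v_0, v_1, \dots, v_k = t$ in which, for each $j \in [k]$, the nodes $v_{j-1}$ and $v_j$ are joined by $\Gamma+1$ parallel arcs $a^j_1, \dots, a^j_{\Gamma+1}$ of unit capacity. For the lower bound on $\foptpm$, I would consider the symmetric path flow that assigns $1/(\Gamma+1)^{k-1}$ units to each of the $(\Gamma+1)^k$ many \stpaths; every arc is then saturated, so the flow is feasible. If an adversary deletes $\gamma_j$ arcs in layer $j$ with $\sum_j \gamma_j \leq \Gamma$, the number of surviving paths equals $\prod_{j=1}^k (\Gamma+1 - \gamma_j)$. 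A short swap argument --- replacing a pair $(\gamma_i,\gamma_j)$ with $(\gamma_i+\gamma_j,0)$ strictly decreases this product, because $(\Gamma+1-\gamma_i)(\Gamma+1-\gamma_j) - (\Gamma+1)(\Gamma+1-\gamma_i-\gamma_j) = \gamma_i \gamma_j > 0$ whenever both are positive --- shows the minimum is attained by placing all $\Gamma$ failures in a single layer, leaving $(\Gamma+1)^{k-1}$ surviving paths and a surviving flow value of exactly $1$. Hence $\foptpm \geq 1$.

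For the upper bound on $\foptam$, I would analyse an arbitrary feasible $x$ layer by layer via the aggregated quantities $F_j := \sum_{i=1}^{\Gamma+1} x(a^j_i)$. Among $\Gamma+1$ nonnegative numbers the sum of the top $\Gamma$ equals the total minus their minimum, so the worst-case instance of \eqref{static:edge-rob-constraint} at $v_j$ collapses to $\min_i x(a^j_i) \geq F_{j+1}$. Combined with the elementary bound $\min_i x(a^j_i) \leq F_j/(\Gamma+1)$, this yields the recursion $F_{j+1} \leq F_j/(\Gamma+1)$; iterating from $F_1 \leq \Gamma+1$ gives $F_k \leq (\Gamma+1)^{-(k-2)}$, so the robust objective, which equals $\min_i x(a^k_i)$, is at most $F_k/(\Gamma+1) \leq (\Gamma+1)^{-(k-1)}$. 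The only non-routine step in the whole argument is the swap-based minimisation of $\prod_j(\Gamma+1-\gamma_j)$; the rest reduces to direct algebra.
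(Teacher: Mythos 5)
Your proposal is correct, but it follows a genuinely different route from the paper. The paper exhibits a single fixed instance: $\Gamma+1$ internally disjoint two-arc \stpaths{} through nodes $v_1,\dots,v_{\Gamma+1}$, all arcs of unit capacity. Since each $v_i$ has only one incoming arc and $\Gamma\geq 1$, robust flow conservation \eqref{static:edge-rob-constraint} forces every feasible arc-model flow to be zero on the arcs into $t$, so $\foptam=0$, while the path model sends one unit on each of the $\Gamma+1$ disjoint paths and keeps value $1$ after $\Gamma$ failures; because $\foptam=0$, this one instance works simultaneously for every $\alpha$, and $\foptgm\geq\foptpm$ gives the second claim. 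You instead construct a family of layered chains depending on $\alpha$: the symmetric path flow together with the swap argument minimizing $\prod_j(\Gamma+1-\gamma_j)$ gives $\foptpm\geq 1$, and the recursion $F_{j+1}\leq F_j/(\Gamma+1)$, obtained because with exactly $\Gamma+1$ incoming arcs the worst-case surviving inflow at a node is $\min_i x(a^j_i)\leq F_j/(\Gamma+1)$, makes $\foptam\leq(\Gamma+1)^{-(k-1)}$ decay geometrically. Both arguments are sound; the paper's is shorter and produces an instance with infinite ratio, whereas yours requires the instance to grow with $\alpha$ but shows the stronger qualitative fact that the ratio $\foptpm/\foptam$ is unbounded even over instances on which $\foptam>0$. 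The only detail worth adding is the trivial case $\alpha\leq 0$ (your logarithmic choice of $k$ presumes $\alpha>0$): there any instance with $\foptpm\geq 1$ suffices, since $\foptam\geq 0$ always holds.
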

\begin{proof}

  For an arbitrary $\Gamma\geq 1$, we construct an instance $I=(G,\Gamma)$ such 
  that $\foptpm=1$ and $\foptam=0$, cf. \Cref{fig:gapSPM_SAM}.
  The constructed graph $G$ consists of the nodes $s$, $t$, and $v_i$, 
  $i\in[\Gamma+1]$. 
  For all $i\in [\Gamma+1]$, $s$ is connected to $v_i$ by an arc $a'_i$ and 
  $v_i$ is connected to $t$ by an arc $a''_i$.  
  All arcs have unit capacity.

  In all feasible solutions $\xam$ to \SAM ~it holds that $\xam(a''_i)=0$ 
  for all $i\in[\Gamma+1]$ due to robust flow conservation 
  \eqref{static:edge-rob-constraint}.
  It follows that $\foptam=0$. 
  
  The set of \stpaths ~in $G$ is given by 
  $\Pa=\{P_i\coloneqq \{a'_i,a''_i\} ~ \forall i \in [\Gamma+1]\}$ 
  and the optimal solution $\xoptpm$ to \SPM ~is given 
  by $\xoptpm(P)=1 ~\forall P \in \Pa$. 
  Deleting $\Gamma$ arcs results in $\foptpm=(\Gamma+1)-\Gamma=1$, 
  proving the first claim. 
  Since the optimal solution to \SPM ~is also optimal for \SGM ~on this graph, the second claim follows.
\end{proof}

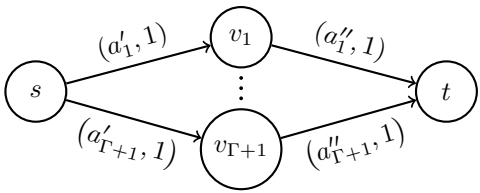
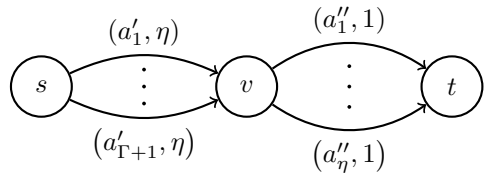
\begin{figure}[h]
  \begin{minipage}[t]{.49\textwidth}
    \begin{tikzpicture}[scale=.9]
      \node at (0,1.8) {}; 
      \node at (0,-1.8) {}; 
      \node(s) at (-3,0) [circle,draw=black, thick, minimum size=8mm]{$s$};
      \node(v1) at (0,0.8) [circle,draw=black, thick, minimum size=8mm]{$v_1$};
      \node at (0,0.2)[fill, circle, inner sep = 0.5pt] {};
      \node at (0,0.05)[fill,circle,inner sep=0.5pt]{};
      \node at (0,-0.1)[fill, circle, inner sep = 0.5pt] {};
      \node(vgamma1)at(0,-0.85)[circle,draw=black,thick,minimum size=8mm]
      		{$v_{\Gamma+1}$};
      \node(t) at (3,0) [circle,draw=black, thick, minimum size=8mm] {$t$};
      \draw[thick,->](s)edge node[above,rotate=15]{$\left(a'_1,1\right)$}(v1);
      \draw[thick,->](s)edge node[below,,rotate=-15]
      		{$\left(a'_{\Gamma+1},1\right)$}(vgamma1);
      \draw[thick,->](v1)edge node[above,rotate=-16]{$\left(a''_1,1\right)$}(t);
      \draw[thick,->](vgamma1)edge node[below,rotate=16]
      		{$\left(a''_{\Gamma+1},1\right)$}(t);
    \end{tikzpicture}
    \subcaption{Instance with $\foptpm=\foptgm=1$ and $\foptam=0$.}
    		\label{fig:gapSPM_SAM}
  \end{minipage}
  \begin{minipage}[t]{.49\textwidth}
    \begin{tikzpicture}[scale=.9]
      \node at (0,1.8) {}; 
      \node at (0,-1.8) {}; 
      \node(s) at (-3,0) [circle,draw=black, thick, minimum size=8mm] {$s$};
      \node(v) at (0,0) [circle,draw=black, thick, minimum size=8mm] {$v$};
      \node(t) at (3,0) [circle,draw=black, thick, minimum size=8mm] {$t$};
      \draw[thick,->,bend left =25](s)edge node[above]
      	{$\left(a'_{1},\eta\right)$}(v);
      \draw[thick,->,bend right=25](s)edge node[below]
      	{$\left(a'_{\Gamma+1},\eta\right)$}(v);
      \draw[thick,->,bend left =35](v)edge node[above]
      	{$\left(a''_{1},1\right)$}(t);
      \draw[thick,->,bend right=35](v) edge node[below]
      	{$\left(a''_{\eta},1\right)$} (t);
      \node at (-1.5,0.25)[fill, circle, inner sep = 0.5pt] {};
      \node at (-1.5,0)[fill,circle,inner sep=0.5pt]{};
      \node at (-1.5,-0.25)[fill, circle, inner sep = 0.5pt] {};
      \node at (1.5,0.30)[fill, circle, inner sep = 0.5pt] {};
      \node at (1.5,00)[fill,circle,inner sep=0.5pt]{};
      \node at (1.5,-0.30)[fill, circle, inner sep = 0.5pt] {};
    \end{tikzpicture}
    \subcaption{Instance with $\foptam=\foptgm=\eta-\Gamma$ and \\
    	$\foptpm = \nicefrac{\eta}{\Gamma+1}$, 
    	i.e. $\foptam\geq \foptpm$ and $\foptgm\geq \foptpm$.}
    \label{fig:gapSAM_SPM}
  \end{minipage}    
  \caption{Instances in the proofs of Prop. \ref{Prop:gapSPM_SAM} 
  	({\footnotesize A}) and Prop. \ref{Prop:gapSAM_SPM} ({\footnotesize B}).}
\end{figure}

\begin{prop}\label{Prop:gapSAM_SPM}
	Let $\Gamma\in\N$. For any $\alpha = \Gamma+1-\nicefrac{1}{\beta}$ with 
	$\beta\in \N$ there are instances such that ${\foptam=\alpha \foptpm}$ and 
	${\foptgm=\alpha \foptpm}$.
\end{prop}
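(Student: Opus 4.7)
The plan is to use the instance depicted in \Cref{fig:gapSAM_SPM} with the parameter choice $\eta \coloneqq \beta\Gamma(\Gamma+1)$. That is, $G$ has a source $s$, an intermediate node $v$, a sink $t$, arcs $a'_1,\dots,a'_{\Gamma+1}$ from $s$ to $v$ each of capacity $\eta$, and arcs $a''_1,\dots,a''_\eta$ from $v$ to $t$ each of unit capacity. With this choice, the target ratio $\alpha = \Gamma+1-\tfrac{1}{\beta}$ will drop out of the arithmetic.

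For $\foptpm$, the $s$-$t$-paths of $G$ are the pairs $(a'_i,a''_j)$. The uniform solution $x(P)=\tfrac{1}{\Gamma+1}$ is feasible (capacities at $a''_j$ are tight), gives nominal value $\eta$, and loses $\tfrac{\Gamma\eta}{\Gamma+1}$ to any adversary removing $\Gamma$ of the $a'$-arcs, yielding robust value $\tfrac{\eta}{\Gamma+1}=\beta\Gamma$. For the matching upper bound, observe that for any feasible $\xpm$ we have $\sum_P \xpm(P)\le\eta$, and the adversary that deletes the $\Gamma$ arcs $a'_i$ carrying the largest flow destroys at least $\tfrac{\Gamma}{\Gamma+1}\sum_P \xpm(P)$. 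Hence $\foptpm\le\tfrac{\eta}{\Gamma+1}$.

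For $\foptam$, set $\xam(a'_i)=\eta$ and $\xam(a''_j)=1$; robust flow conservation at $v$ holds since even after deleting $\Gamma$ incoming arcs the remaining $a'$-arc still supplies $\eta\ge\eta=\sum_j\xam(a''_j)$, and the worst-case deletion of $\Gamma$ arcs at $t$ leaves $\eta-\Gamma$ flow arriving. For $\foptgm$, this value is already achieved since the \SAM-solution extends to \SGM, so $\foptgm\ge\eta-\Gamma$. The matching upper bound for both models follows from the same observation: every unit of flow arriving at $t$ must traverse exactly one $a''_j$, each of capacity $1$, so the flow $f_j$ through $a''_j$ satisfies $f_j\le 1$ and $\sum_j f_j\le\eta$. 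The adversary that deletes the $\Gamma$ arcs $a''_j$ of largest $f_j$-value destroys at least $\sum_j f_j-(\eta-\Gamma)$ flow (since the remaining $\eta-\Gamma$ terms each contribute at most $1$), and the robust value is therefore bounded by $\eta-\Gamma$. This step is the main obstacle, because \SGM\ allows flow on arbitrary subpaths and one might fear that combining $s$-$v$ and $v$-$t$ subpaths circumvents the adversary; the flow-through-$a''_j$ argument shows this is not the case, since \emph{every} path terminating at $t$ passes through some $a''_j$, whether it be an $s$-$t$-path, a $v$-$t$-path, or a subpath arising from a weak-flow-conservation slack at $v$.

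Combining $\foptam=\foptgm=\eta-\Gamma$ and $\foptpm=\tfrac{\eta}{\Gamma+1}$ gives
\[
 \frac{\foptam}{\foptpm}=\frac{\foptgm}{\foptpm}=\frac{(\eta-\Gamma)(\Gamma+1)}{\eta}=\Gamma+1-\frac{\Gamma(\Gamma+1)}{\eta}=\Gamma+1-\frac{1}{\beta}=\alpha,
\]
completing the proof.
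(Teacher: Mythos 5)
Your proposal is correct and follows essentially the same route as the paper: the same three-node instance with $\Gamma+1$ parallel $s$-$v$ arcs of capacity $\eta=\beta\Gamma(\Gamma+1)$ and $\eta$ unit-capacity $v$-$t$ arcs, the same values $\foptam=\foptgm=\eta-\Gamma$ and $\foptpm=\nicefrac{\eta}{(\Gamma+1)}$, and the same final ratio computation. The only difference is that you spell out the matching upper bounds (the averaging argument over the $\Gamma+1$ arcs $a'_i$ for \SPM, and the per-arc flow values $f_j\le 1$ through the $a''_j$ for \SAM~and \SGM) which the paper's proof asserts more informally, e.g.\ when claiming that $\xoptam$ is also optimal for \SGM; this is a welcome refinement rather than a different argument.
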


\begin{proof}
  
  For an arbitrary $\Gamma\geq 1$ we construct an instance $I=(G,\Gamma)$ 
  that consists of three nodes $\{s, v, t\}$. Let 
  \begin{equation}
  	\eta:=\frac{\Gamma^2+\Gamma}{\Gamma+1-\alpha}=\beta(\Gamma^2+\Gamma)\in\N.
  \end{equation}
  There are $\Gamma+1$ parallel arcs $a'_i,~i\in[\Gamma+1]$, from $s$ to $v$ 
  with capacity $\eta$.
  The node $v$ is connected to $t$ by $\eta$ parallel arcs $a''_i, ~i\in[\eta]$, 
  with unit capacity, cf. \Cref{fig:gapSAM_SPM}.
  
  First, we show that an optimal solution $\xoptam$ to \SAM ~on $I$ has 
  objective value ${\foptam = \eta-\Gamma}$. 
  We may assume w.l.o.g. that $\xoptam(a'_i)=u_{a'_i}=\eta$ for all $i\in[\Gamma+1]$.
  Due to robust flow conservation \eqref{static:edge-rob-constraint}, 
  at most $\eta$ units of flow may be sent from node $v$ to $t$. 
  Since $\xoptam$ is optimal, these $\eta$ flow units are distributed equally along the 
  parallel arcs from $v$ to $t$, fully utilizing their capacity. 
  Since $\eta\geq\Gamma+1$, $\Gamma$ many arcs from $v$ to $t$ with unit 
  capacity are deleted in the worst case, resulting in an optimal value 
  $\foptam = \eta-\Gamma$. 
  
  Next, we show that an optimal solution $\xoptpm$ to \SPM ~has 
  objective value $\foptpm = \nicefrac{\eta}{(\Gamma+1)}$.
  Obviously, $\xoptpm$ saturates each of the parallel arcs from
  $v$ to $t$ and thus $\sum_{P\in\mathcal{P}}\xoptpm(P)=\eta$.
  As the optimal flow minimizes the flow that is deleted in the worst case, 
  it is uniformly distributed on the $\Gamma+1$ parallel arcs from $s$ to $v$. 
  Thus, $\sum_{P\in\mathcal{P}:a'\in P}x(P)=\nicefrac{\eta}{(\Gamma+1)}$ 
  for all arcs $a'$ from $s$ to $v$.
  Since $\eta\geq \Gamma+1$, $\Gamma$ of the arcs connecting $s$ and $v$ are 
  deleted in the worst case and the optimal value is given by 
  $\foptpm=\eta-\nicefrac{\Gamma\eta}{(\Gamma+1)}=\nicefrac{\eta}{(\Gamma+1)}$. 
  
  Combining the above yields 
  \begin{align*}
    \frac{\foptam}{\foptpm} = 
    \frac{\eta-\Gamma}{\nicefrac{\eta}{(\Gamma+1)}} = 
    \frac{(\Gamma+1)(\eta-\Gamma)}{\eta} = 
    \Gamma+1 - \frac{\Gamma^2+\Gamma}{\eta} = 
    \alpha.
  \end{align*} 
  
  We note that the optimal solution $\xoptam$ to \SAM ~is also optimal 
  to \SGM ~so that ${\foptgm=\alpha \foptpm}$.
\end{proof}

For the special case of $\Gamma=1$ and instances without directed cycles, 
i.e. directed acyclic graphs (DAGs), we prove tightness of this bound. 
We thereby use the following lemma.
\begin{lemma}\label{lemma:reducedflowproperty}
  Let $\Gamma=1$ and let $G$ be a DAG. 
  For any feasible solution $\tildex$ to \SGM ~there exists a feasible solution~
  $x$ to \SGM ~with 
  \begin{enumerate}
  	\item[\textit{(i)}] ${\sum_{P:a\in \barP}x(P)\leq \sum_{P:a\in \barP}\tildex(P)}$
  	for all ${a\in A}$,
  	\item[\textit{(ii)}] ${x(P)=\tildex(P)}$ for all ${P\in\dinPt}$, implying
  	  ${\fnom(x)=\fnom(\tildex)}$ and ${f_R(x)=f_R(\tildex)}$,
  \end{enumerate}
  and
  \begin{align}\label{eq:reducedflowproperty}
  	\sum_{\substack{P\in\dinP(v):\\a\in P}}x(P)\leq \sum_{P\in\doutP(v)}x(P)
  	\quad\quad \forall a\in A
  \end{align}
  at every node $v\in V\setminus\{s,t\}$. 
\end{lemma}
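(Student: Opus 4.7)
The plan is to construct $x$ from $\tilde{x}$ by iteratively reducing flow on subpaths that end at internal nodes, processing these nodes in reverse topological order. First observe that $\fnom(\cdot)$ and $f_R(\cdot)$ depend only on the flows $\{x(P) : P \in \dinPt\}$, so condition (ii) immediately yields the asserted equalities $\fnom(x) = \fnom(\tildex)$ and $f_R(x) = f_R(\tildex)$. Furthermore, if the construction only ever decreases $x(P)$ for subpaths $P \notin \dinPt$, condition (i) holds automatically (arc usage never grows) and (ii) is trivially preserved; the entire task therefore reduces to finding such a sequence of decreases that enforces \eqref{eq:reducedflowproperty} at every internal node.

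I would then treat the internal nodes $v \in V \setminus \{s,t\}$ in reverse topological order, which is well-defined since $G$ is a DAG. Abbreviate $T_v := \sum_{P \in \dinP(v)} x(P)$, $O_v := \sum_{P \in \doutP(v)} x(P)$, $I_{v,a} := \sum_{P \in \dinP(v),\, a \in P} x(P)$ and $s_{v,a} := T_v - I_{v,a} - O_v$ at the current state of $x$. The SGM robust flow conservation at $v$ for $\Gamma = 1$ is exactly $s_{v,a} \geq 0$ for all $a \in A$, whereas the target inequality \eqref{eq:reducedflowproperty} reads $I_{v,a} \leq O_v$. If $T_v \leq 2 O_v$, then $I_{v,a} \leq T_v - O_v \leq O_v$ follows from RFC and no action is needed at $v$. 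Otherwise I select reductions $\Delta_P \in [0, x(P)]$ for $P \in \dinP(v)$ satisfying $\sum_{P \in \dinP(v),\, a \in P} \Delta_P \geq I_{v,a} - O_v$ for every overloaded arc $a$ (ensuring the new $I_{v,a}$ drops to at most $O_v$) and $\sum_{P \in \dinP(v),\, a' \notin P} \Delta_P \leq s_{v,a'}$ for every $a' \in A$ (preserving RFC at $v$), and apply them before moving on to the next node in the order.

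Two points then have to be verified. Invariance under later iterations: reductions at $v$ only alter flows on $\dinP(v)$, hence leave $T_w, I_{w,a}, O_w$ unchanged for every other internal node $w$, and they decrease $O_u$ at the origin $u$ of each reduced path, which only relaxes RFC at $u$; any new violation of \eqref{eq:reducedflowproperty} at $u$ is handled when $u$ itself is processed, since $u$ lies strictly upstream of $v$ and therefore comes later in the reverse topological order. Existence of valid $(\Delta_P)$: this is the main obstacle, which I plan to settle by a direct combinatorial construction that repeatedly picks a most-overloaded arc $a$ and reduces flow on a path $P \in \dinP(v)$ containing $a$ together with all currently RFC-tight arcs, using a slack-accounting argument on the $s_{v,a'}$ to show that such a path always exists and that the procedure terminates with a valid $(\Delta_P)$. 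Alternatively, the existence of $(\Delta_P)$ can be cast as an LP feasibility question and settled via Farkas' lemma, exploiting the simple structure of the $\Gamma = 1$ RFC constraints.
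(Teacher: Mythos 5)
Your reduction of the lemma to the existence of a valid reduction vector $(\Delta_P)_{P\in\dinP(v)}$ is where the whole difficulty sits, and as stated this step fails: restricting yourself to \emph{reducing} flow on the paths of the given decomposition is not enough, and the path you want to reduce (\emph{a path containing $a$ together with all currently RFC-tight arcs}) need not carry any flow in the current decomposition. Concretely, let $s\to w_1\to w_2\to v$ be connected by three consecutive pairs of parallel arcs $\{a_i,b_i\}$, $i=1,2,3$, let $v$ be joined to $t$ by a single arc of capacity $1$ carrying one unit of flow, and let $\tildex$ put one unit on each of the three $s$-$v$-paths $(a_1,a_2,b_3)$, $(b_1,a_2,a_3)$, $(a_1,b_2,a_3)$ (all other capacities large). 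Then $T_v=3$, $O_v=1$, $I_{v,a_i}=2$ and $s_{v,a_i}=0$ for $i=1,2,3$, so $\tildex$ is robust feasible with the constraints for the scenarios $a_1,a_2,a_3$ tight, while \eqref{eq:reducedflowproperty} is violated at all three arcs $a_i$. Your constraints (B) force $\Delta_P=0$ for each of the three flow-carrying paths (each avoids some tight $a_i$), whereas (A) requires a total reduction of at least $1$ through every $a_i$; the system is infeasible, so neither the greedy construction nor the Farkas/LP route can succeed within the fixed decomposition. The lemma still holds on this instance, but only after \emph{re-routing} the inflow of $v$: the same arc-level inflow admits a decomposition using the path $(a_1,a_2,a_3)$ with one unit, and deleting that unit restores \eqref{eq:reducedflowproperty} at $v$ while preserving robust feasibility, (i) and (ii).

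This re-decomposition of the flow entering $v$ (changing the path decomposition while keeping the arc values $\sum_{P\in\dinP(v):a\in P}x(P)$ fixed) is precisely the ingredient your plan omits and around which the paper's proof is built: among all solutions satisfying the property at the already-processed nodes, it picks one minimizing the total excess $R(x')=\sum_{a\in A}\max\{y(x',a)-f^+(x'),0\}$, uses this extremal choice to show that any two maximizing arcs lie together on some positive-flow path ending at $v$, uses the DAG structure to totally order the maximizing arcs and place them on one $v$-bound path, then re-decomposes the incoming arc flow (via artificial arcs $(s,w)$) so that a path carrying positive flow contains \emph{all} maximizing arcs, and finally reduces flow on that path by a small $\epsilon$, contradicting minimality of $R$. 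Your outer framework is sound and parallels the paper's (processing nodes with the sink side first, the invariance argument for already-processed nodes, and the observation that nothing is needed when $T_v\le 2O_v$), but the central feasibility claim is false without allowing flow to be shifted among paths ending at $v$, so the proposal has a genuine gap at its key step.
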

For the proof, we refer to \Cref{prooflemma:reducedflowproperty}.

The following proof of tightness starts with an optimal solution to \SGM ~and then 
explicitly constructs a solution to \SPM ~on the same instance. 
We show that this construction, namely the deletion of flow on all subpaths, 
along with a suitable shifting of flow, does not reduce the robust flow value by 
more than half. 

\begin{theorem}\label{Prop:gapSGMSPMupperboundGamma1}
  Let $\Gamma=1$ and let $G$ be a DAG. 
  Then, $\foptgm\leq 2\foptpm$ and $\foptam\leq 2\foptpm$.
\end{theorem}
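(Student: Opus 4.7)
The plan is to prove the stronger bound $\foptgm\le 2\foptpm$; the bound $\foptam\le 2\foptpm$ then follows immediately, since every \SAM-feasible solution can be lifted to a \SGM-feasible solution of the same robust value, giving $\foptam\le\foptgm$.

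Starting from an optimal $\xoptgm$, I would first apply Lemma \ref{lemma:reducedflowproperty} to obtain a solution $x$ with $f_R(x)=\foptgm$, the same values on $\dinPt$, and the additional reduced-flow property \eqref{eq:reducedflowproperty}. Set $F:=\fnom(x)$ and $L^*:=\max_{a\in A}\sum_{P\in\dinPt,\,a\in P}x(P)$, so that $\foptgm=F-L^*$. Since $G$ is a DAG, I can process nodes in reverse topological order and iteratively extend every subpath in $\dinPt$ backwards to an \stpath. For a subpath $P\in\dinPt$ starting at $v\ne s$, weak flow conservation (the $S=\emptyset$ case of \eqref{static:general-rob-constraint}) guarantees enough incoming $x$-flow at $v$ to prepend, and the prepending is done by splitting the prepended mass \emph{proportionally} across the available incoming subpaths rather than concentrating it on any single incoming arc. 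Iterating produces a solution $x'\in\R^{|\Pa|}_{\ge 0}$ which is \SPM-feasible because its arc loads are bounded above by the corresponding arc loads of $x$, which satisfy $\sum_{P\in\barP,\,a\in P}x(P)\le u_a$.

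The crux is then to bound the worst-case arc load $L':=\max_{a\in A}\sum_{P\in\Pa,\,a\in P}x'(P)$ by $(F+L^*)/2$, which directly yields $f_R(x')\ge F-(F+L^*)/2=(F-L^*)/2=\foptgm/2$, i.e.\ $\foptpm\ge\foptgm/2$. To prove this bound, I would decompose the load of $x'$ on an arc $a$ into the contribution coming from the ``forward'' $\dinPt$-pieces of the extended \stpaths{} (bounded by $L^*$ by definition) and the contribution coming from the prepended pieces. The reduced-flow property \eqref{eq:reducedflowproperty} provides, at every node $v$ visited during the extension, the inequality $\sum_{P\in\dinP(v),\,a\in P}x(P)\le\sum_{P\in\doutP(v)}x(P)$; applied telescopically along each backward-extension chain and combined with the proportional prepending, this yields the desired uniform bound on $L'$.

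The hardest step is establishing the uniform bound on $L'$. The forward and prepended contributions to a single arc's load cannot be bounded independently of each other: a naive extension scheme can route all prepended mass through one bottleneck arc and blow up the ratio. One really needs the balanced proportional prepending together with \eqref{eq:reducedflowproperty} to ensure that whenever the forward load on an arc is large, the prepended load on the same arc is correspondingly small. Making this trade-off precise, in particular telescoping \eqref{eq:reducedflowproperty} correctly along the backward extension chains, is the technical heart of the proof.
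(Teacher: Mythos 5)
Your plan follows the same route as the paper's proof: start from an optimal solution to \SGM, invoke \Cref{lemma:reducedflowproperty} to get the reduced-flow property \eqref{eq:reducedflowproperty}, convert it into an \SPM-feasible flow by iteratively prepending incoming subpaths with the prepended mass split \emph{proportionally}, and aim at exactly the bound $\max_{a\in A}\sum_{P\in\Pa:a\in P}\xpm(P)\le \tfrac12(F+L^*)$ with $F:=\fnom(\xoptgm)$ and $L^*:=\max_{a\in A}\sum_{P\in\dinPt:a\in P}\xoptgm(P)$, which gives $f_R(\xpm)\ge\tfrac12\foptgm$ and, via $\foptam\le\foptgm$, both assertions. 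So the architecture and the key lemma are identical to the paper's.

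The genuine gap sits exactly where you place ``the technical heart'': the uniform bound on your $L'$ is asserted, not proved, and the proposed mechanism is not obviously adequate. Two concrete points. First, the per-node estimate that at most half of the incoming flow at a node uses a fixed arc $a'$ requires \emph{both} robust flow conservation \eqref{static:general-rob-constraint} (scenario $\{a'\}$) \emph{and} \eqref{eq:reducedflowproperty} for the flow values actually used in the proportional split at the moment of processing; the paper secures this by re-invoking \Cref{lemma:reducedflowproperty} after every single shifting step, whereas you apply it only once. In your reverse-topological scheme this is repairable (the incoming subpath flows at a node are untouched until that node is processed, and an induction along the processing order shows the inherited mass to be prepended never exceeds the original outgoing flow, so weak flow conservation suffices for feasibility of the prepending), but none of this is argued. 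Second, and more seriously, ``telescoping \eqref{eq:reducedflowproperty} along each backward-extension chain'' does not by itself yield the bound: mass that avoids $a'$ at one prepending step can still pick up $a'$ at a later, more upstream step, so the per-node fractions $\rho_j\le\tfrac12$ do not compose along a chain — naively they accumulate to $1-\prod_j(1-\rho_j)$, which can exceed $\tfrac12$. The paper does not telescope chain-wise; it performs a per-iteration \emph{aggregate} accounting, showing that in each iteration the increase of the total through-$a'$ inflow of $t$ is at most half of the shifted amount, and combines this with the preservation of the nominal inflow of $t$ to reach $\sum_{P\in\Pa:a'\in P}\xpm(P)\le L^*+\tfrac12\foptgm$. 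Without an argument of this type (or a worked-out substitute), the decisive inequality $L'\le\tfrac12(F+L^*)$ — and hence the theorem — remains unproven in your proposal.
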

\begin{proof}
  For a feasible solution $\xgm$ to \SGM ~and an arc $a$,
  we denote the amount of flow on paths ending at $t$ and using  arc $a$ by 
  \begin{align*}
    \mu(\xgm,a)\coloneqq\sum_{\substack{P\in\dinPt:\\a\in P}}\xgm(P).
  \end{align*}
  We then have, due to $\Gamma=1$, 
  \begin{align}\label{eq:rob_value_mu}
    f_R(\xgm)=\fnom(\xgm)-\max_{a\in A}\mu(\xgm,a).
  \end{align}
  Let $\xoptgm$ be an optimal solution to \SGM ~which satisfies 
  \eqref{eq:reducedflowproperty}. 
  The existence of such a solution follows from \Cref{lemma:reducedflowproperty}. 
  
  We construct a feasible solution $\xpm$ to \SPM ~on $G$, for which we will show 
  that $f_R(\xpm)\geq\nicefrac{1}{2}\foptgm$.
  
  The construction is an iterative procedure, starting with $x_0=\xoptgm$.
  The procedure iterates over incoming paths of $t$ and in each iteration 
  $i=1,2,\dots$ of the procedure, we will construct a new flow $x_i$ as follows:
  let $P_i\in\dinPt$ be a $v_i$-$t$-path with $v_i\neq s$, $x_{i-1}(P_i)>0$ 
  and minimum number of arcs.
  If no such path exists, we stop the procedure as we will discuss later.
  We first construct an interim solution $\tilde{x}_i$.
  For this, we shift all the flow from $P_i$ to 
  incoming paths of $t$ that contain $P_i$ as subpath, i.e. to paths in the set
  \begin{align*}
    \Pa_i:=\{\bar{P}\in\dinPt\defsep 
      \bar{P}=P'\cup P_i\text{ with }P'\in\dinP(v_i)\}.
  \end{align*} 
  In detail, we set $\tilde{x}_i(P_i):= 0$ and
  \begin{alignat}{2}
    \tilde{x}_i(\bar{P})&:=x_{i-1} (\bar{P})
      +&&\frac{x_{i-1}(P')}{\sum_{P\in\dinP(v_i)}x_{i-1}(P)}\cdot x_{i-1}(P_i)
      \quad\forall\bar{P}=P'\cup P_i\in\Pa_i, 
      \label{Equ:Upperbound_construction_xi}\\
    \tilde{x}_i(P')&:=x_{i-1}(P')
      -&&\frac{x_{i-1}(P')}{\sum_{P\in\dinP(v_i)}x_{i-1}(P)} \cdot x_{i-1}(P_i)
      \quad\forall P'\in\dinP(v_i),\\
    \tilde{x}_i(P)&:=x_{i-1}(P) \quad
      &&\forall P\in\barP\setminus(\{P_i\}\cup\Pa_i\cup \dinP(v_i)).
      \label{Equ:Upperbound_construction_xi2}
  \end{alignat}
  By this, we reduce the flow on paths ending at $v_i$, namely
  in total by $x_{i-1}(P_i)$. 
  The outgoing flow of $v_i$ is also reduced by $x_{i-1}(P_i)$ on $P_i$. 
  Thus, $\tilde{x}_i$ still satisfies robust flow conservation
  \eqref{static:general-rob-constraint} at $v_i$ and is therefore feasible 
  for \SGM ~since also the capacity constraints remain satisfied.
  
  From $\tilde{x}_i$, a feasible solution $x_i$ 
  satisfying (i), (ii) (cf. \Cref{lemma:reducedflowproperty}), 
  and \eqref{eq:reducedflowproperty} can be constructed due 
  to \Cref{lemma:reducedflowproperty}.

  In some iteration $K$, there exists no path
  $P_K\in\dinPt$ with $P_K\notin\doutP(s)$ and $x_{K-1}(P_K)>0$.
  We define $\xpm(P):=x_{K-1}(P)$ for all $P\in\Pa$ and $\xpm(P):=0$ for all 
  $P\in\barP\setminus\Pa$ and stop with the procedure.
  Note that $\xpm$ is a feasible solution to \SPM ~as there is no flow on subpaths.
  
  Furthermore, this procedure ends after finitely many iterations, 
  i.e. $K<\infty$, as we consider every path ending at $t$ at most once.
  
  We now prove two further properties of $\xpm$.\\
  First, in every iteration $i\in [K-1]$, the inflow of $t$ does not change, 
  since, due to (ii), 
  \begin{align*}
    \sum_{\bar{P}\in\Pa_i}x_i(\bar{P})+x_i(P_i)
      =\sum_{\bar{P}\in\Pa_i}x_i(\bar{P})
      =\sum_{\bar{P}\in\Pa_i}x_{i-1}(\bar{P})+x_{i-1}(P_i),
  \end{align*}
  which implies equality of the nominal values $\fnom(\xpm)=\fnom(\xoptgm)$.
  
  Second, we investigate the amount of flow on the single arcs. 
  Let $a'\in A$ be an arbitrary arc. 
  
  For every iteration $i\in [K-1]$, $x_{i-1}$ satisfies robust 
  flow conservation \eqref{static:general-rob-constraint} at $v_i$, 
  which in particular due to $\Gamma=1$ implies
  \begin{align*}
    \sum_{P\in\dinP(v_i):a'\notin P}x_{i-1}(P)\geq 
      \sum_{P\in\doutP(v_i)}x_{i-1}(P).
  \end{align*}   
  Together with \eqref{eq:reducedflowproperty} we obtain
  \begin{align*}
    \frac{\sum_{P\in\dinP(v_i):a'\in P}x_{i-1}(P)}
      {\sum_{P\in\dinP(v_i)}x_{i-1}(P)}\leq \frac{1}{2}.
  \end{align*}  
  Therefore, by summing up \eqref{Equ:Upperbound_construction_xi} 
  and \eqref{Equ:Upperbound_construction_xi2} for all paths 
  that end at $t$ and contain $a'$, it follows that
  \begin{align*}
    \sum_{\mathclap{\substack{P\in\dinPt:\\a'\in P}}}~~~x_i(P) 
      =~~~ \sum_{\mathclap{\substack{P\in\dinPt:\\a'\in P}}}~~~x_{i-1}(P) 
      	+ \frac{\sum_{P\in\dinP(v_i):a'\in P}x_{i-1}(P)}
      		{\sum_{P\in\dinP(v_i)}x_{i-1}(P)} x_{i-1}(P_i)
      \leq ~~~\sum_{\mathclap{\substack{P\in\dinPt:\\a'\in P}}}~~~x_{i-1}(P)
      	+  \frac{1}{2}x_{i-1}(P_i).
  \end{align*}
  We recall that in every iteration $i$, we add flow on 
  incoming paths of $t$ of amount $x_{i-1}(P_i)$ in total. 
  The above inequality hence says that at most half of this amount of flow 
  uses the arc $a'$. 
  Thus, for $\xpm$ and the arc $a' \in A$, we have 
  \begin{align*}
    \sum_{\substack{P\in\Pa:\\a'\in P}}\xpm(P)
      \leq \mu(\xoptgm,a') 
      + \sum_{\substack{P\in\Pa:\\a'\notin P}}\xpm(P)
      \leq \max_{a\in A}\mu(\xoptgm,a) 
      + \sum_{\substack{P\in\Pa:\\a'\notin P}}\xpm(P).
  \end{align*}

  Hence, 
  \begin{align*}
    \sum_{\substack{P\in\Pa:\\a'\notin P}}\xpm(P)\geq 
      \sum_{\substack{P\in\Pa:\\a'\in P}}\xpm(P) - \max_{a\in A}\mu(\xoptgm,a).
  \end{align*}
  Putting together the above, we obtain
  \begingroup
  \allowdisplaybreaks
  \begin{align*}
    \foptgm &= \fnom(\xoptgm)-\max_{a\in A}\mu(\xoptgm,a)
    =\fnom(\xpm)-\max_{a\in A}\mu(\xoptgm,a)\\
    &=\sum_{\substack{P\in\Pa:\\a'\notin P}}\xpm(P)
      +\sum_{\substack{P\in\Pa:\\a'\in P}}\xpm(P)
      -\max_{a\in A}\mu(\xoptgm,a)
    \geq 2\sum_{\substack{P\in\Pa:\\a'\in P}}\xpm(P)
      - 2\max_{a\in A}\mu(\xoptgm,a),
  \end{align*}
  \endgroup
  which is equivalent to
    $\sum_{P\in\Pa:a'\in P}\xpm(P)
      \leq \max_{a\in A}\mu(\xoptgm,a) + \frac{1}{2}\foptgm$.
  
  Since $a'\in A$ has been chosen arbitrarily, we can conclude the proof of the 
  first claim by
  \begin{align*}
    \foptpm\geq f_R(\xpm)&=\fnom(\xpm)
      -\max_{a\in A}\sum_{\substack{P\in\Pa:\\a\in P}}\xpm(P)
    =\fnom(\xoptgm)-\max_{a\in A}\sum_{\substack{P\in\Pa:\\a\in P}}\xpm(P)\\
    &\geq \fnom(\xoptgm) - \max_{a\in A}\mu(\xoptgm,a) - \frac{1}{2}\foptgm
    ~= \frac{1}{2}\foptgm.
  \end{align*}
  
  Moreover, every optimal solution to \SAM ~is feasible for \SGM~and thus 
  ${\frac{1}{2}\foptam\leq \frac{1}{2}\foptgm\leq \foptpm}$. 
\end{proof}

We recall that both, the path model and the general model, are solvable in polynomial time for $\Gamma=1$. Thus, the former results show that \SGM ~yields 
robust optimal values up to twice as large as the ones of \SPM ~in this computationally tractable special case.

Note that the proof of \Cref{lemma:reducedflowproperty} 
requires the graph to be acyclic. If this lemma can be extended
to general directed graphs, \Cref{Prop:gapSGMSPMupperboundGamma1} can be extended to those as well.

Due to the simple structure of the instances in \Cref{Prop:gapSAM_SPM} and tightness on DAGs with $\Gamma=1$,
we suppose that the lower bounds on the gap from \Cref{Prop:gapSAM_SPM} are tight in general.  
We therefore state the following conjecture.

\begin{conj}\label{conj:upperbound}
  For any $\Gamma\in\N$ the inequalities $\foptgm\leq (\Gamma+1)\foptpm$ and 
  $\foptam\leq (\Gamma+1)\foptpm$ hold on every instance.
\end{conj}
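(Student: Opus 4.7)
The plan is to generalize the iterative shifting argument of \Cref{Prop:gapSGMSPMupperboundGamma1} to arbitrary $\Gamma$. Starting from an optimal solution $\xoptgm$ to \SGM, I would construct a feasible solution $\xpm$ to \SPM ~by repeatedly replacing flow on a hanging subpath ending at $t$ but starting at some $v\neq s$ with flow on its extensions back to $s$, and then bound the damage these shifts can inflict on the robust objective.

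The first step is to generalize \Cref{lemma:reducedflowproperty}: given a feasible $\tildex$ to \SGM ~on a DAG, find an equivalent feasible $x$ (with no larger flow on any arc, same $\fnom$ and same $f_R$) that in addition satisfies the strengthened reduced flow property
\[
	\sum_{\substack{P\in\dinP(v):\\P\cap S\neq\emptyset}} x(P)
	\;\leq\; \sum_{P\in\doutP(v)} x(P)
	\qquad \forall v\in V\setminus\{s,t\},~S\in\Sc.
\]
This should follow inductively in a topological order on $v$ by cancelling minimal circulations that create superfluous inflow to $v$ through some scenario $S$ relative to the outflow, in the spirit of \Cref{lemma:reducedflowproperty}.

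The second step is the shifting procedure itself, formally identical to the one used for $\Gamma=1$: at iteration $i$ pick a path $P_i\in\dinPt$ with $x_{i-1}(P_i)>0$, $v_i\neq s$ and minimum number of arcs, and redistribute $x_{i-1}(P_i)$ across the extensions $\bar{P}=P'\cup P_i$ with $P'\in\dinP(v_i)$, proportionally to $x_{i-1}(P')$. Feasibility is maintained by robust flow conservation at $v_i$ together with the capacity constraints, and $\fnom$ is preserved throughout. The third step is the error analysis: for any scenario $S\in\Sc$ the quantity $\mu(x_i,S):=\sum_{P\in\dinPt:P\cap S\neq\emptyset}x_i(P)$ grows by at most $x_{i-1}(P_i)\cdot\rho_i(S)$, where $\rho_i(S)$ denotes the fraction of the $x_{i-1}$-weighted inflow at $v_i$ that passes through $S$.

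The main obstacle is getting the correct bound on $\rho_i(S)$. Combining the strengthened reduced flow property with robust flow conservation at $v_i$ yields only $\rho_i(S)\leq\tfrac{1}{2}$, which reproduces the factor $2$ of \Cref{Prop:gapSGMSPMupperboundGamma1} rather than the conjectured factor $\Gamma+1$. Two avenues seem promising for bridging this gap. The first is an amortized analysis: summed over all iterations, the total growth on a fixed $S$ should be controlled by the capacities of the at most $\Gamma$ arcs in $S$ rather than by the crude sum of local fractions, and a charging argument in the spirit of the multiroute analysis of \cite{baffier14} might then deliver the sharper constant. The second is to bypass the shifting entirely and exploit maximum $(\Gamma+1)$-route flows: a Kishimoto-style min-cut characterisation gives $\foptpm\geq v^{*}/(\Gamma+1)$ where $v^{*}$ denotes the value of a maximum $(\Gamma+1)$-route flow, so it would suffice to prove $\foptgm\leq v^{*}$, which could be attempted via LP duality by comparing \eqref{static:general-lambda} to the multiroute max-flow LP and constructing a feasible dual witness. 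In either case, the bound $\foptam\leq(\Gamma+1)\foptpm$ follows as an immediate corollary, since every feasible solution to \SAM ~is feasible for \SGM ~and thus $\foptam\leq\foptgm$.
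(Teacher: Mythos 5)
You are attempting to prove what the paper states only as \Cref{conj:upperbound}: the paper gives no proof, and indeed remarks right after the conjecture that the technique used for \Cref{Prop:gapSGMSPMupperboundGamma1} seems insufficient for $\Gamma>1$. Your proposal does not close this gap, and its first step is already unattainable. The strengthened reduced flow property you postulate, namely $\sum_{P\in\dinP(v):P\cap S\neq\emptyset}x(P)\leq\sum_{P\in\doutP(v)}x(P)$ for all scenarios $S\in\Sc$, fails on the paper's own instance from \Cref{Prop:gapSAM_SPM} (\Cref{fig:gapSAM_SPM}) for \emph{every} feasible solution with positive outflow at $v$: since $s$ has no incoming arcs, the paths ending at $v$ are exactly the $\Gamma+1$ parallel arcs, and robust flow conservation \eqref{static:general-rob-constraint} at $v$ forces each of them to carry at least the full outflow $F$ of $v$ (consider the scenario deleting the other $\Gamma$ arcs). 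Hence for a scenario $S$ consisting of $\Gamma$ of these arcs the inflow through $S$ is at least $\Gamma F>F$ whenever $\Gamma\geq 2$ and $F>0$, and no cancellation of circulations can repair this. Consequently the bound $\rho_i(S)\leq\nicefrac{1}{2}$ is not available — and it had better not be, because assembling it exactly as in the $\Gamma=1$ proof would yield $\foptgm\leq 2\foptpm$ for all $\Gamma$, contradicting the ratios arbitrarily close to $\Gamma+1$ exhibited in \Cref{Prop:gapSAM_SPM}. This also shows that the identity $f_R(x)=\fnom(x)-\max_{a\in A}\mu(x,a)$, on which the final accounting of \Cref{Prop:gapSGMSPMupperboundGamma1} rests, has no straightforward analogue once the worst case consists of several arcs.

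The two fallback avenues you mention are sketches rather than arguments. The amortized charging scheme is not specified: you do not say what quantity is charged to the capacities of the arcs in $S$, nor why repeated shifting of the same flow unit (a path created in one iteration can be shifted again later) does not inflate the total charge. The multiroute avenue correctly reduces the conjecture to $\foptgm\leq v^{*}$, where $v^{*}$ is the maximum $(\Gamma+1)$-route flow value, since $\foptpm\geq\nicefrac{v^{*}}{(\Gamma+1)}$ follows from \cite{kishimoto1996,baffier14}; but the inequality $\foptgm\leq v^{*}$ is precisely where the difficulty now sits — an optimal solution to \SGM~need not decompose into systems of $\Gamma+1$ arc-disjoint \stpaths, and you construct no dual witness for the comparison of \eqref{static:general-lambda} with the multiroute LP. As it stands, your proposal establishes nothing beyond the already proven case $\Gamma=1$; the statement remains open.
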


The proofs of \Cref{lemma:reducedflowproperty}  as well as
\Cref{Prop:gapSGMSPMupperboundGamma1} heavily 
exploit the fact that in the case of ${\Gamma=1}$ it is easy to determine the arc that fails in the worst case, as it is the arc that maximizes $\mu$. 
Thus, for ${\Gamma>1}$, a different proof technique seems to be necessary for \Cref{conj:upperbound}.


\subsection*{Price of Robustness}\bookmarksetup{depth=part}

We conclude this section by comparing the nominal value of the robust
optimal solutions to the nominal optimal value~$\foptnom$. 
We hence analyze the loss that may arise when hedging 
against uncertainties that do not realize
which has been introduced as the \textit{Price of Robustness (PoR)} in \cite{PoR}. 
To the best of our knowledge, there are no earlier results on the PoR in the robust network flow context. 

We start by a result on the PoR for the arc model, for which we use the instance from the proof of  \Cref{Prop:gapSPM_SAM}, cf. \Cref{fig:gapSPM_SAM}. 
Note that the nominal value of all robust feasible solutions 
to \SAM ~in this instance is zero, while the nominal 
optimal flow has a value greater than zero. 
We infer that, in general, the PoR can be unbounded: 
\begin{corollary}\label{prop:gapNom_SAMNom}
    For any $\Gamma\in\N$ and $\alpha\in\R$ there are instances such that 
    $\foptnom>\alpha \fnom(\xam)$ for all robust feasible solutions 
    $\xam$ to \SAM.
\end{corollary}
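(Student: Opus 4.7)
The plan is to invoke the instance already constructed in the proof of \Cref{Prop:gapSPM_SAM}, depicted in \Cref{fig:gapSPM_SAM}, and observe that it simultaneously witnesses this corollary for arbitrary $\alpha$. Concretely, I would take the graph with nodes $\{s,t\}\cup\{v_i:i\in[\Gamma+1]\}$, a single arc $a'_i$ of unit capacity from $s$ to each $v_i$, and a single arc $a''_i$ of unit capacity from each $v_i$ to $t$.

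First, I would verify that every robust feasible solution $\xam$ to \SAM{} satisfies $\fnom(\xam)=0$. Since $|\dinA(v_i)|=1\leq\Gamma$, the scenario where $a'_i$ fails activates robust flow conservation \eqref{static:edge-rob-constraint} at $v_i$ with zero inflow, forcing $\xam(a''_i)=0$. Repeating this for every $i\in[\Gamma+1]$ yields
\begin{equation*}
  \fnom(\xam)=\sum_{i\in[\Gamma+1]}\xam(a''_i)=0.
\end{equation*}

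Second, I would observe that the nominal problem admits the flow sending one unit on each \stpath{} $\{a'_i,a''_i\}$, giving $\foptnom\geq \Gamma+1>0$. Combining both observations yields
\begin{equation*}
  \foptnom\geq \Gamma+1>0=\alpha\cdot\fnom(\xam)
\end{equation*}
for every robust feasible $\xam$ to \SAM{} and for every $\alpha\in\R$, which is exactly the desired conclusion.

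There is no real obstacle here: the statement is essentially a reinterpretation of \Cref{Prop:gapSPM_SAM}, since the instance used there already makes the nominal value of any robust feasible arc flow vanish while leaving a strictly positive nominal optimum. The only thing worth stating explicitly in the write-up is the application of robust flow conservation at each $v_i$, which is what drives $\fnom(\xam)=0$.
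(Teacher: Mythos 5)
Your proposal is correct and matches the paper's argument: the paper likewise derives this corollary directly from the instance of \Cref{Prop:gapSPM_SAM} (\Cref{fig:gapSPM_SAM}), noting that robust flow conservation forces every robust feasible solution to \SAM{} to have nominal value zero while the nominal optimum is positive. Your explicit verification via \eqref{static:edge-rob-constraint} at each $v_i$ is exactly the observation the paper relies on.
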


In contrast, for $\Gamma=1$ there exists an optimal solution $\xoptpm$ to \SPM ~with 
$\foptpmnom = \foptnom$ and an optimal solution $\xoptgm$ to \SGM ~with 
$\foptgmnom = \foptnom$, as shown in \cite{aneja} and 
\Cref{lemma:static:gamma1-maxflow}, respectively.
Robustness thus can be achieved without loss in the nominal flow value if $\Gamma=1$. 
However, this is not the case for $\Gamma\geq 2$ anymore.

\begin{prop}\label{Prop:gapNom_SPMNom}
  Let $\Gamma\geq 2$ and $\alpha \in [1,\nicefrac{2\Gamma}{\Gamma+1})\cap \Q$. 
  Then, there are instances such that 
  ${\foptnom= \alpha \foptpmnom=\alpha \fnom(\xoptgm)}$, where 
  $\xoptpm$ is an optimal solution to \SPM ~with maximal nominal value and
  $\xoptgm$ is an optimal solution to \SGM ~with maximal nominal value.
\end{prop}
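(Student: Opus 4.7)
My plan is to construct, for each rational $\alpha = p/q$ in the specified range, an explicit instance realizing exactly this Price of Robustness. I would build on the structure from \Cref{Prop:gapSAM_SPM} (Figure~\ref{fig:gapSAM_SPM}), combining a ``robust core'' between $s$ and $t$ that fully determines the robust-optimal value with an auxiliary substructure that enlarges the nominal maximum flow but cannot be utilized in any robust optimum. Writing $\alpha = p/q$ with coprime positive integers satisfying $p(\Gamma+1) < 2\Gamma q$, the parameters $p$, $q$, and $\Gamma$ determine the exact dimensions of each component.

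The robust core would be a Figure~\ref{fig:gapSAM_SPM}-type subgraph (with $\Gamma+1$ parallel arcs between $s$ and an intermediate node $v$, and parallel arcs between $v$ and $t$) whose parameters are chosen so that any PM and GM robust optimum uses exactly $q$ units of flow (the scaling factor $\eta$ in the original construction serves this role here). The auxiliary ``nominal-boosting'' substructure, contributing an additional $p-q$ units to the nominal max flow, would be designed so that placing positive flow on any of its arcs either (i)~creates a new adversarial scenario whose loss exactly matches the nominal gain, leaving the robust value unchanged while preventing a net nominal increase in a robust optimum, or (ii)~is excluded by the lemma \Cref{cor:static:aux-lemma} in the GM setting (when the additional arcs attach to nodes with in-degree $\leq\Gamma$).

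The verification would then proceed in three steps. I would first compute $\foptnom = p$ by exhibiting an explicit nominal max flow saturating both components. Second, I would derive the robust-optimal value $\foptpm = \foptgm$ by a direct calculation of worst-case losses over the adversary's choice of $\Gamma$ arcs, showing this value equals the robust-optimal value of the core alone. Third, I would show that any robust optimum with maximal nominal value uses zero flow on the booster component, giving $\foptpmnom = \fnom(\xoptgm) = q$ and hence $\alpha = p/q$.

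The main obstacle is the third step: ensuring that no robust optimum in either model uses positive flow on the booster. In PM this requires the booster arcs to be capacity-shared with the core in a way that any positive booster flow creates a worst-case scenario whose loss increases by at least the nominal gain; a careful case distinction over the adversary's partition of $\Gamma$ failures between the core and the booster is required. In GM the additional robust flow-conservation constraints allow an alternative argument via \Cref{cor:static:aux-lemma}, but one must verify that the subpath flows permitted in GM cannot circumvent the bound by rerouting through intermediate structure. Parameter tuning of the core and booster to hit precisely each rational $\alpha$ in the range, while simultaneously satisfying both the PM and GM bounds, is the chief technical challenge of the construction.
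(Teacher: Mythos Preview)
Your plan diverges from the paper's construction, and the divergence is exactly where your acknowledged ``main obstacle'' bites. The paper does \emph{not} use a separate ``robust core plus nominal booster'' architecture. Instead it builds a single four-node graph $\{s,v_1,v_2,t\}$ in which the nominal and robust optima compete for the \emph{same} arc capacities: there are $\Gamma-1$ parallel low-capacity arcs on each of the three hops $s\to v_1\to v_2\to t$, plus two high-capacity ``bypass'' arcs $a_1=(s,v_2)$ and $a_2=(v_1,t)$. The nominal optimum saturates $a_1,a_2$ and ignores the middle hop; the robust optimum is forced to route most flow through the middle hop (so that no single arc carries too much), and the capacity sharing on $a'_i,a'''_i$ then caps its nominal value. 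The parameter $\eta$ is solved for explicitly as a function of $\alpha$ and $\Gamma$, and the optimality argument is a short exchange argument: shifting flow from a middle path $P'_i$ onto a path through $a_1$ or $a_2$ keeps or lowers the nominal value while strictly increasing the worst-case loss, so no robust optimum can have larger nominal value. For \SGM, the paper's argument is one line: both intermediate nodes $v_1,v_2$ have in-degree exactly $\Gamma$, so \Cref{cor:static:aux-lemma} forces every GM optimum to live on \stpaths, hence $\foptgm$-optimality coincides with $\foptpm$-optimality.

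Your mechanism (i) --- ``loss exactly matches the nominal gain, leaving the robust value unchanged'' --- does not by itself rule out the booster in a robust optimum with \emph{maximal nominal value}: if the robust value is unchanged and the nominal value went up, the booster-using flow is still robust-optimal and has strictly larger nominal value, which is the opposite of what you need. You would have to make the booster strictly \emph{hurt} the robust value, and then argue that the trade-off is sharp enough to pin down the exact rational $\alpha$; this is where the capacity-sharing idea you mention would have to do real work, and you have not specified how. The paper sidesteps this entirely by making the ``booster'' and the ``core'' the same arcs, so the exchange argument is direct and the GM reduction is automatic via \Cref{cor:static:aux-lemma}. If you want to push your two-component plan through, you will need to replace mechanism (i) with a strict-decrease argument and give the explicit capacities; as written, the third step is a genuine gap.
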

In the proof of \Cref{Prop:gapNom_SPMNom} in \Cref{proofProp:gapNom_SPMNom}, 
we provide such a class of instances.\\
Going beyond our results on the PoR for the three static models, some open 
questions remain, e.g., if there is an upper bound on the PoR 
for \SPM ~and \SGM, or if there exist instances on which the PoR for \SPM ~is 
higher than for \SGM.

\bookmarksetup{depth=subsection}
\section{Robust dynamic maximum flows}\label{sec:dynamic}
We now extend the concepts of robust maximum flows to a dynamic setting. 
In detail, we consider maximum flow problems in which a travel time is assigned to each arc. 
These travel times then are affected by uncertainty in the sense that delays may occur. 
We model such problems as follows.
A graph $G=(V,A,s,t,u,\tau,\Delta\tau)$ is equipped with
travel times $\tau_a\in\N_0$ and delays $\Delta\tau_a\in\N_0$ on each arc $a\in A$. 
Therefore, in figures throughout this section, arc labels denote the name of the arc, the travel time, the delay and the capacity as shown below.
\begin{figure}[h]
		\begin{tikzpicture}
    	\node(v) at (0,0) [circle,draw=black, thick, minimum size=6mm] {$v$};
    	\node(w) at (4,0) [circle,draw=black, thick, minimum size=6mm] {$w$};
        \draw[thick,->](v) edge node[above]{$(a,\tau_a,\Delta\tau_a,u_a)$}(w);
        \end{tikzpicture}
        \label{fig:arclabelsDynamic}
\end{figure}

If an amount of flow enters an arc $a=(v,w)\in A$ at a point in time, denoted by $\theta$, then it arrives at $w$ at time $\theta+\tau_a$. 
Given a time horizon $T\in\N$, we define the set of time steps until the 
time horizon by $\T:=[T]=\{1,\dots,T\}$. 
We say that an amount of flow arrives at the sink within the time horizon if it reaches $t$ at any point in time $\theta\in\T$.
Flow solutions are again defined on arcs or paths and now also depend on time, as we have to specify when a certain amount of flow starts to travel along an arc or a path. 
$x(a,\theta)$ describes the inflow rate of arc $a$ at time $\theta$ and 
$x(P,\theta)$ the inflow rate of path $P$ at time $\theta$, respectively.
Furthermore, we consider uncertainties in the form that at most $\Gamma$ many arcs may be delayed.
The set of all possible scenarios $\Sc$ and its corresponding 
indicator set $\Lambda$ as well as the set of all simple \stpaths ~$\Pa$ and the set of all subpaths $\barP$ are defined as stated in \Cref{sec:Definitions}. 
For simplicity, we allow flow only on simple paths. 
Sending flow on cycles would introduce additional complexity, as then there would  be infinitely many paths.
Such a problem in turn could then be modeled alternatively by allowing for waiting. 
The discussion of such and other extensions of our model is beyond the scope of this work.
\\ 

The nominal dynamic maximum flow problem maximizes the flow arriving at the sink $t$ within the time horizon $T$ while satisfying flow conservation as well as capacity constraints. 
We set $x(a,\theta):=0$ for all $a\in A, ~\theta\in\Z\setminus{\T}$. 
Then, a simple discrete nominal dynamic maximum flow model using strict flow conservation is given by
\begin{subequations}\label{dynamic:nominal}
    \begin{alignat}{2}
    f^*=~\max_{x\in\R_{\geq 0}^{|A|T}} \quad
        &\sum_{\mathclap{a\in\dinA(t)}}\quad
         \sum_{\theta\in [T-\tau_a]}x(a,\theta)\\[5pt]
    \st \quad 
    	&\sum_{\mathclap{a\in\dinA(v)}}\quad x(a,\theta-\tau_a)
    		=\sum_{a\in\doutA(v)}x(a,\theta), \qquad 
        	&&	\forall v\in V\setminus\{s,t\},\theta\in \T, \\
        &x(a,\theta)\leq u_a,  &&\forall a\in A,\theta\in \T.
	\end{alignat}
\end{subequations}
There are several variants of this problem in the literature, for example continuous models, models using weak flow conservation and models which allow to store flow at intermediate nodes, cf. \cite{skutella}. 
In \cite{tardos}, the authors point out  a close correspondence between discrete
and continuous nominal dynamic flows.

\subsection{Robust dynamic flow models}\label{sec:dynamic:models}
In the dynamic setting, flow requires a certain travel time $\tau$ to traverse an arc. 
If an arc $a$ is delayed, the travel time on this arc changes from $\tau_a$ to $\tau_a+\Delta\tau_a$ for all flow sent on arc $a$ at any point in time.

When defining flow on paths, $\tau_P=\sum_{a\in P}\tau_a$ describes the travel time of path $P$ 
and $\Delta_z(P)\in\N_0$ the delay of path $P$ in scenario $z$.

For a path $P$ and an arc $a \in P$, we denote the point in time when flow, which enters arc $a$ at time $\theta$ in scenario $z$ via path $P$, has entered path $P$ at its first node by $\theta_{z,a}(P)$ (cf. \cite{gottschalk}): 
 \begin{equation*}
\theta_{z,a}(P):=\theta-\sum_{a'\in A:a'\prec_P a}(\tau_a'+z(a')\Delta\tau_{a'}),
 \end{equation*}
where $a'\prec_P a$ means that $a', a\in P$ and  $a'$ is visited before $a$.

Additionally, by $\theta_z(P)$ (and $\theta_z(a)$) we denote the point in time when flow, which arrives at the end node of path $P$ (or arc $a$) at time $\theta$ in scenario $z$, has entered path $P$ (or arc $a$) at its first node:
\begin{equation*}
	\theta_z(P):= \theta - (\tau_P + \Delta_z(P)) \quad\text{and}\quad
	\theta_z(a):= \theta - (\tau_a + z(a)\Delta\tau_a).
\end{equation*}

\subsection*{Dynamic path model (DPM)}\bookmarksetup{depth=part}
To the best of our knowledge, there is only one model for the robust dynamic maximum flow problem with delays in the literature so far, cf. \cite{gottschalk}. 
This model is an extension of the path model \SPM ~to the dynamic case, i.e., flow is assigned to \stpaths ~again.
\cite{gottschalk} set up the model in a continuous framework, where the flow is then a function of the time. 
They show that there always exists a piecewise constant flow solution that changes its values only at integer points in time and still solves the continuous model to optimality. 
For this reason, we give their model, which we call dynamic path model \DPM, in a discretized version.
We define $x(P,\theta):=0$ for all $P\in \Pa, ~\theta \in \Z\setminus \T$. 
Then, the dynamic path model \DPM ~reads
\begin{taggedsubequations}{DPM}\label{dynamic:path}
    \begin{alignat}{2}
    \foptdpm:= 
    \max_{\mathclap{x \in \R_{\geq 0}^{|\Pa|T}}} \quad
    	&\min_{z\in\Lambda} \quad \sum_{P\in\Pa}  \qquad \quad
    		\sum_{\mathclap{\theta\in[T-\tau_P-\Delta_z(P)]}} \qquad x(P,\theta) 
    		\qquad &&\\[5pt]
    \st \quad &\sum_{\mathclap{P\in\Pa:a\in P}} \quad x(P,\theta_{z,a}(P))\leq u_a, 
        		\quad &&\forall a\in A, \theta\in\T, z\in\Lambda.
    \end{alignat}
\end{taggedsubequations}
We denote an optimal solution to \DPM ~by $\xoptdpm$ and the optimal value 
by $\foptdpm$.

An important solution concept for the nominal dynamic maximum flow problem are temporally repeated flows, 
cf. e.g. \cite{skutella}.
A feasible solution is called temporally repeated if the inflow rate 
$x(P,\theta)$ of all paths $P\in\Pa$ is constant over all $\theta\in\T$.
While there always exists a temporally repeated flow solving the nominal 
problem to optimality, there is not necessarily a temporally repeated flow that solves \DPM ~to optimality, cf. \cite{gottschalk}.

\subsection*{Dynamic arc model (DAM)}
We now introduce the dynamic arc model \DAM, which is the natural extension of the 
static arc model \SAM ~to the dynamic case, i.e. the flow solution has one entry for
each arc and for each time step. 
In the static setting, robust flow conservation ensured that the inflow rate at 
each node is higher than the outflow rate, no matter which scenario realizes. 
We extend this constraint to the dynamic setting by requiring 
robust flow conservation for each point in time.
We define $x(a,\theta):=0$ for all $a\in A, ~\theta\in\Z\setminus{\T}$. 
Then, the dynamic arc model \DAM ~is given by 
\begin{taggedsubequations}{DAM}\label{dynamic:arc}
    \begin{alignat}{2}
    \foptdam:= 
	\max_{\mathclap{x\in\R_{\geq 0}^{|A|T}}} \quad
		&\min_{z\in\Lambda} \quad \sum_{\mathclap{a\in\dinA(t)}} \qquad \qquad
        \sum_{\mathclap{\theta\in [T-\tau_a-z(a)\Delta\tau_a]}} \qquad 
        x(a,\theta) \\[3pt] 
    \st \quad &\sum_{\mathclap{a\in\dinA(v)}} ~~ x(a,\theta_z(a)) \geq ~~~
    	 \sum_{\mathclap{a\in\doutA(v)}} ~~ 
    	x(a,\theta), ~~\qquad
        &&\forall v\in V \setminus\{s,t\}, \theta\in \T, z\in\Lambda,
        	\label{dynamic:arc:rob-constraint}\\
    	& x(a,\theta) \leq u_a,
    	&&\forall a\in A,\theta\in \T.
    \end{alignat}
\end{taggedsubequations}
We denote an optimal solution to \DAM ~by $\xoptdam$ and the optimal value 
by $\foptdam$.

\subsection*{Dynamic general model (DGM)}
Finally, we also extend the new general model to the dynamic setting, i.e. flow is assigned to subpaths. 
Analogously to the other two dynamic models, we require flow conservation and capacity constraints to hold for every possible scenario at each point in time.
We define $x(P,\theta):=0$ for all $P\in\barP, ~\theta\in\Z\setminus{\T}$.
The dynamic general model \DGM ~then reads
\begin{taggedsubequations}{DGM}\label{dynamic:general}
    \begin{alignat}{4}
    \foptdgm:= 
    \max_{\mathclap{x\in \R_{\geq 0}^{|\barP|T}}} \quad
    	&\min_{z\in\Lambda} \quad
    	&&\sum_{\mathclap{P\in\dinPt}}  \qquad \qquad
    	&&\sum_{\mathclap{\theta\in [T-\tau_P-\Delta_z(P)]}} ~~\quad x(P,\theta) 
    	\qquad && \label{dynamic:general-obj}\\[5pt]
    \st \quad 
    	&\sum_{\mathclap{P\in\dinP(v)}} 
    	&&x(P,\theta_z(P))
    	&&\geq ~~~ \sum_{\mathclap{P\in\doutP(v)}} \quad x(P,\theta), \quad 
    	&&\forall v\in V\setminus \{s,t\},\theta\in\T, z\in\Lambda,
    		\label{dynamic:general-rob-constraint}\\
     	&\sum_{\mathclap{P\in\barP:a\in P}}
    	&&x(P,\theta_{z,a}(P))
    	&&\leq ~u_a, \quad
    	&&\forall a\in A,\theta\in\T,z\in\Lambda.
    \end{alignat}
\end{taggedsubequations}
\addtocounter{equation}{-2}

Given a feasible solution $x$ to \eqref{dynamic:general}, 
we denote its objective value by $f_R(x)$. 
We denote the optimal solution usually by $\xoptdgm$ and the optimal value by  
$\foptdgm=f_R(\xoptdgm)$. 

As in the static case, we observe that \DPM ~and \DAM ~are special cases of \DGM 
~where the solutions are restricted to \stpaths ~or paths consisting of 
single arcs, respectively. 
Therefore, any feasible solution to \DPM ~or \DAM ~can be extended such that 
it is feasible for \DGM ~by defining the flow on the subpaths, which are not
\stpaths ~or single arcs, respectively, to be equal to zero. 
For ease of notation, we write ${\xdpm\in\R^{|\Pa|}}$ 
when talking about \eqref{dynamic:path} and ${\xdpm\in\R^{|\barP|}}$ when talking 
about \eqref{dynamic:general}, which implies $\foptdpm=f_R(\xoptdpm)$.
Similarly, we proceed with feasible solutions to \DAM.

Additionally, we remark that the static robust maximum flow models are special cases of their respective dynamic counterparts. 
Given an instance for the static problem, we can transform it to an instance 
for the dynamic problem by defining $\tau\equiv 0$, $\Delta\tau\equiv 1$, 
and $T=1$.
The delay of any arc $a$ increases the travel time so that flow does not arrive at 
the sink within the time horizon if it uses $a$. 
Therefore, the delay of an arc has the same effect as if this arc would fail. 
Since the travel times are zero and the time horizon is one, an optimal solution
to one of the dynamic model thus maximizes the objective value of the corresponding 
static model.

\subsection*{Continuous models} In the above dynamic flow models, we consider discretized flow solutions. 
The discretization has been made over the time, so that the flow that is assigned to a path and a point in time can only be assigned to integer points in time. 
We justify this restriction of our dynamic models to the discretized settings as follows: 
one can show that, even if one allows for arbitrary Lebesgue-integrable functions over time as flow solutions, the models can be solved to optimality by piecewise constant functions that change their values only at integer points in time. 
This has been shown for the dynamic path model in \cite[Proposition 1]{gottschalk}. 
Here, we show the statement for the dynamic general model and the dynamic arc model.
In the following, we denote by continuous dynamic model a model that allows, in contrast to the stated ones, for any Lebesgue-integrable function over time as flow solution. 
A formal model for such a continuous framework can be easily derived by slightly modifying the previously stated ones: the domain of $x$ changes as described before and 
all sums over time are replaced by integrals over time.

\begin{prop}\label{lem:DAM/DGM:pw-constant} 
The continuous general model and the continuous arc model can be solved to optimality by a piecewise 
constant function: 
	\begin{enumerate}
	\item[\textit{(i)}] There is a piecewise constant function which changes 
		its values only at integer points and solves the dynamic general model 
		to optimality.
	\item[\textit{(ii)}] There is a piecewise constant function which changes 
		its values only at integer points and solves the dynamic arc model 
		to optimality.
	\end{enumerate}
	\end{prop}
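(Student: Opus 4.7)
The plan is to adapt the averaging argument from \cite[Proposition 1]{gottschalk}, which establishes the analogous statement for \DPM, to handle both \DGM ~and \DAM, exploiting throughout that all travel times and delays are integer. For part \textit{(i)}, I start with any feasible Lebesgue-integrable solution $x^*$ to the continuous general model and define a candidate $\tilde{x}\colon \barP\times \R\to \R_{\geq 0}$ by the unit-interval averaging
\begin{align*}
\tilde{x}(P,\theta)\coloneqq \int_{\lfloor \theta\rfloor}^{\lfloor \theta\rfloor+1} x^*(P,\sigma)\, d\sigma,
\end{align*}
so that $\tilde{x}(P,\cdot)$ is constant on every interval $[\sigma,\sigma+1)$ for $\sigma\in\Z$. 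The decisive structural fact is that because $\tau_a,\Delta\tau_a\in\N_0$, each shift $\theta_z(P)$ or $\theta_{z,a}(P)$ differs from $\theta$ by an integer depending only on $P,z,a$. Hence, whenever $\theta\in[\sigma,\sigma+1)$ with $\sigma\in\Z$, the shifted argument also lies in a unit interval with integer endpoints, on which $\tilde{x}(P,\cdot)$ is constant.

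To check feasibility of $\tilde{x}$, I would integrate the pointwise continuous constraints for $x^*$ over $\theta\in[\sigma,\sigma+1)$: the capacity constraint, after interchanging integral and sum and performing the unit-Jacobian change of variables in the shifted time argument, yields
\begin{align*}
\sum_{P\in\barP:\,a\in P}\tilde{x}(P,\theta_{z,a}(P))=\int_\sigma^{\sigma+1}\sum_{P\in\barP:\,a\in P}x^*(P,\theta_{z,a}(P))\,d\theta\leq u_a,
\end{align*}
and the analogous argument transfers the robust flow conservation inequality \eqref{dynamic:general-rob-constraint} to $\tilde{x}$. For the objective, since $T-\tau_P-\Delta_z(P)\in\N_0$ for every $z\in\Lambda$ and $P\in\dinPt$, telescoping the unit-interval integrals gives
\begin{align*}
\int_0^{T-\tau_P-\Delta_z(P)}\tilde{x}(P,\theta)\,d\theta=\int_0^{T-\tau_P-\Delta_z(P)} x^*(P,\theta)\,d\theta,
\end{align*}
so the worst-case objective agrees for $\tilde{x}$ and $x^*$, and in particular optimality is preserved. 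Part \textit{(ii)} then follows without extra work, because \DAM ~is precisely the restriction of \DGM ~to flow variables supported on single-arc subpaths, and the averaging construction preserves this support.

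The main obstacle I anticipate is careful bookkeeping with the two shift notations $\theta_z(\cdot)$ and $\theta_{z,a}(\cdot)$ across capacity, conservation, and objective constraints, in particular keeping track of the fact that different subpaths undergo different integer shifts but that each such shift maps $[\sigma,\sigma+1)$ to another unit interval with integer endpoints. Once this integer-shift property is used to align all time intervals, each verification reduces to a routine interchange of integral and sum combined with an integer-shift change of variables.
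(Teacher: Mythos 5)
Your proposal is correct and follows essentially the same route as the paper: unit-interval averaging of an optimal continuous solution, with the integrality of $\tau_a$ and $\Delta\tau_a$ guaranteeing that all time shifts map integer unit intervals to integer unit intervals, so feasibility and the objective value are preserved. The only cosmetic difference is that you verify the constraints by a direct integral--sum interchange and change of variables, whereas the paper argues feasibility by contradiction at a single point of the unit interval; both are instances of the same averaging argument adapted from \cite[Proposition~1]{gottschalk}, and your treatment of \DAM\ as the single-arc-supported special case matches the paper's ``analogous'' remark.
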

The proof of \Cref{lem:DAM/DGM:pw-constant} is similar to the proof of the analogous statement for
the dynamic path model from \cite{gottschalk} and can be found in 
\Cref{prooflem:DAM/DGM:pw-constant}.


\bookmarksetup{depth=subsection}
\subsection{Complexity of the dynamic models}\label{sec:dynamic:complexity}

In the following, we provide an overview of the computational complexity of the 
three robust dynamic maximum flow models. 
We start with the complexity of \DPM: 
\cite{gottschalk} prove that solving \DPM ~is at least as hard as \SPM, 
which implies NP-hardness for $\Gamma$ being part of the input. 
They further investigate the complexity of computing a temporally repeated flow 
and provide several results. We supplement the results from \cite{gottschalk} 
by a complexity proof for fixed $\Gamma\geq 1$, which they have left as an open 
question. 
Subsequently, we show that \DAM ~is solvable in polynomial time followed by 
various results concerning the computational complexity of \DGM.
A summary of the complexity results can be found in \Cref{tab:ComplexityOverviewDynamic}.
\begin{table}[h]
\centering
\resizebox{1\textwidth}{!}{
\begin{tabular}{l|l|l|l|l}
        & Arc Model \DAM & \multicolumn{2}{c|}{Path Model \DPM} 
        & General Model \DGM \\[3pt]
        & & general solutions & temporally repeated &
    \\\hline	& & & &\\
    fixed $\Gamma\geq 1$ 
    	& polynomial time 
    	& NP-hard  
    	& NP-hard 
    	& NP-hard \\
    	& (\Cref{thm:DAM:reform}) 
    	& (\Cref{thm:DPM:gamma1})
    	& (\Cref{cor:DPM:fixedgamma-temprep})
    	& (\Cref{thm:DGM:fixedgamma})
    \\	& & & &\\[1pt]
    $\Gamma$ arb.
    	& polynomial time 
    	& NP-hard 
    	& NP-hard 
    	& strongly NP-hard \\    	
    	& (\Cref{thm:DAM:reform}) 
    	& \cite{gottschalk} 
    	& \cite{gottschalk} 
    	& (\Cref{thm:DGM:gammainputNP})
\end{tabular}}
    \caption{Overview of the complexity results of the 
    			robust dynamic flow models.}
    \label{tab:ComplexityOverviewDynamic}
\end{table}
\subsection*{Complexity of \DPM}\bookmarksetup{depth=part}
\begin{theorem}\label{thm:DPM:gamma1}
  \DPM ~is NP-hard for fixed $\Gamma\geq 1$.
\end{theorem}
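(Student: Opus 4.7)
The plan is to prove NP-hardness by a polynomial-time reduction from a weakly NP-hard numerical problem, most naturally \textsc{partition}, to the decision version of \DPM~with $\Gamma = 1$; the case of fixed $\Gamma \geq 2$ then follows by appending $\Gamma - 1$ dummy arcs that the adversary has no incentive to delay, or by a direct analogous argument. The key new ingredient compared to the static setting, where $\Gamma = 1$ is polynomial time solvable by \cite{aneja}, is that the time horizon $T$ together with the per-arc delays $\Delta\tau_a$ provide a natural way to encode numerical quantities of an input instance into the model.

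Given an instance $(a_1, \dots, a_n; S)$ of \textsc{partition} with $\sum_i a_i = 2S$, I would construct a network consisting of $n$ essentially arc-disjoint \stpaths~$P_1, \dots, P_n$, where each $P_i$ contains a distinguished arc $b_i$ whose delay $\Delta\tau_{b_i}$ is calibrated so that attacking $b_i$ destroys exactly the flow which the defender has committed to $P_i$ (i.e. shifts precisely that commitment past $T$). Capacities, travel times and $T$ are chosen so that the nominal maximum flow equals $2S$ and so that the destructible amount on $P_i$ scales linearly with the defender's per-path commitment. With $\Gamma = 1$ the adversary attacks the worst $b_i$, so the defender's optimal strategy is to distribute $2S$ units over the $P_i$ while minimizing the maximum per-path commitment, and the robust optimum reaches the threshold $2S - S = S$ exactly when a balanced partition of $\{a_i\}$ exists.

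The main obstacle is the gadget design: we must force the defender's per-path flow to effectively behave like picking an integer quantity proportional to $a_i$, despite flows being fractional and parametrized by time. I would enforce this via time-dependent capacity constraints along each $b_i$, spreading the sending on $P_i$ across a window of length proportional to $a_i$, and tuning $\Delta\tau_{b_i}$ together with $T$ so that a delay on $b_i$ shifts precisely the entire contribution of $P_i$ past the horizon. A secondary issue is ruling out attacks on arcs other than the $b_i$, which can be handled by giving every non-$b_i$ arc zero delay and keeping the $b_i$ pairwise arc-disjoint. Once the gadget is verified, both directions of the equivalence are short: a balanced partition $I \subseteq [n]$ yields an explicit feasible flow of robust value $S$, and any flow of robust value at least $S$ induces a balanced partition by reading off the total flow on each $P_i$. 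Since the construction has size polynomial in $n + \log\max_i a_i$, the reduction runs in polynomial time and establishes NP-hardness of \DPM~for fixed $\Gamma \geq 1$.
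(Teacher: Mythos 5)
Your reduction source (\textsc{partition}) matches the paper's, but the gadget you sketch cannot work, and the failure is structural rather than a matter of tuning. In your construction the $n$ paths $P_1,\dots,P_n$ are arc-disjoint, every arc except the distinguished $b_i$ has zero delay, and the deliverable amount on $P_i$ within the horizon is (proportional to) $a_i$. Then the defender's problem collapses to: maximize $\sum_i x_i-\max_i x_i$ subject to $0\le x_i\le a_i$ with fractional $x_i$ — a box-constrained continuous load-balancing problem whose optimum is $2S-\max_i a_i$, attained simply by saturating every path (reducing the unique maximum does not help, since sum and maximum drop at the same rate). Since one may assume $\max_i a_i\le S$ (otherwise the \textsc{partition} instance is trivially a ``NO''-instance), every nontrivial instance meets your threshold $S$ regardless of whether a balanced partition exists; e.g.\ $a=(3,3,2)$, $S=4$ has no balanced partition but robust value $8-3=5\ge 4$. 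So the backward direction of your claimed equivalence fails, and no calibration of $\Delta\tau_{b_i}$, $T$ or ``windows of length proportional to $a_i$'' can repair it, because with arc-disjoint paths the adversary's single delay only ever removes one coordinate of a freely splittable fractional flow. (A secondary point: the model has no time-dependent capacities; a sending window can only be emulated through travel times via the deadline $T-\tau_{P_i}$, which again yields exactly the box constraint above.) The paper sidesteps this trap by encoding the numbers in \emph{travel times} and using a purely structural zero/nonzero threshold: on a chain $s=v_1,\dots,v_{n+1}=t$ with two parallel unit-capacity arcs per link of travel times $n\bar{b}b_i$ and $n\bar{b}b_i+b_i$ (where $\bar{b}=\max_i b_i$), all delays equal to $T$ and $T=(2n\bar{b}+1)L+1$, one has $\foptdpm>0$ if and only if there exist two arc-disjoint \stpaths~of travel time at most $T-1$, which by the travel-time arithmetic happens if and only if the partition is balanced.

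Your lift to fixed $\Gamma\ge 2$ is also stated backwards: appending dummy arcs that the adversary has \emph{no} incentive to delay leaves it free to spend all $\Gamma$ delays inside the original gadget, so you would be reducing from the budget-$\Gamma$ version of the instance rather than from the budget-$1$ version you analyzed. What is needed (and what the paper does) is the opposite: append $\Gamma-1$ parallel $s$-$t$ arcs that the worst case is \emph{forced} to delay — capacity $2$ (larger than any flow value achievable in the original gadget), travel time $0$, delay $T$ — so that $\Gamma-1$ units of the adversary's budget are absorbed and exactly one delay remains for the original instance.
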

\begin{proof}
  We first show that \DPM ~is NP-hard for $\Gamma=1$ and then 
  extend this result to fixed $\Gamma>1$. 
  The proof for $\Gamma=1$ closely follows the proof by which \cite{bertsimas} 
  show weakly NP-hardness of the adaptive maximum flow problem under arc 
  failure and which in turn is based on \cite[Theorem 1, proof]{melkonian}. 
  Indeed, the delays we use in the following are sufficiently large such 
  that they correspond to the failure of an arc.
  
  We prove the statement by a reduction from the weakly 
  NP-complete \textsc{partition} problem:
  let $n\in\mathbb{N}$ and given a set of integers $b_i \in \mathbb{N}$, 
  $i\in[n]$, with $\sum_{i\in [n]} b_i=2L$ for some $L\in\N$, 
  is there a subset $I\subset [n]$ such that $\sum_{i \in I} b_i = L$?
  
  We construct an instance $(G=(V,A,s,t,u,\tau,\Delta\tau),\Gamma,T)$ 
  of \DPM ~from such a \textsc{partition} instance as follows,
  cf. \Cref{fig: Network NP-proof}.
  Let $\Gamma=1$, $\bar{b}=\max_{i\in[n]}b_i$ and $T=(2 n \bar{b} +1)L +1$.
  The node set is given by $V=\{s=v_1, v_2,...,v_n, v_{n+1}=t\}$.
  We connect for  every $i\in[n]$ the node $v_i$ to $v_{i+1}$, by two parallel 
  arcs $a_i^*$ and $a_i'$ with travel times $\tau_{a_i^*} := n \bar{b} b_i$ and
  $\tau_{a_i'} := (n \bar{b}+1) ~b_i = \tau_{a_i^*} + b_i$.
  We denote the set of arcs of type $a_i^*$ by $A^*$ and the set of arcs of type
  $a_i'$ by $A'$, so that $A=A^*\cup A'$. 
  We set $\Delta \tau_a:=T$ and $u_a:=1$ for all $a\in A$.
  \begin{figure}[h]
  \centering
  \begin{tikzpicture}[shorten >=2pt, node distance=3.3cm,on grid,auto, 
    roundnode/.style={circle, draw=black, thick, minimum size=8mm}]
    \node[roundnode] (v1) {$s$};
    \node[roundnode] (v2) [right = of v1] {$v_2$};
    \node[roundnode] (v3) [right = of v2] {$v_3$};
    \node[roundnode] (vn) [right = of v3] {$v_n$};
    \node[roundnode] (vn+1) [right = of vn] {$t$};
    \draw[thick,->,bend left=25](v1)edge node {$(a_1^*,n \bar{b} b_1,T,1)$}(v2);
    \draw[thick,->,bend right=25](v1) edge node[below]
    	{$(a_1',n \bar{b} b_1+b_1,T,1)$}(v2);
    \draw[thick,->,bend left=25](v2)edge node {$(a_2^*,n \bar{b} b_2,T,1)$}(v3);
    \draw[thick,->, bend right=25](v2) edge node[below] 
    	{$(a_2',n \bar{b} b_2+b_2,T,1)$}(v3);
    \draw[thick,->,dashed,bend left=25](v3) edge node {} (vn);
    \draw[thick,->,dashed,bend right=25](v3) edge node {} (vn);
    \draw[thick,->, bend left=25](vn) edge node 
    	{$(a_n^*,n \bar{b} b_n,T,1)$}(vn+1);
    \draw[thick,->, bend right=25](vn) edge node[below] 
    	{$(a_n',n \bar{b} b_n+b_n,T,1)$}(vn+1);
  \end{tikzpicture}    
	\caption{Instance constructed in the proof of \Cref{thm:DPM:gamma1}.
		This construction has been proposed in \cite{melkonian,bertsimas}.}
	\label{fig: Network NP-proof}
  \end{figure}
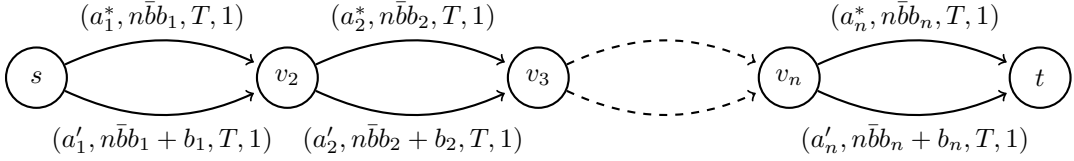
	
	We show that the \textsc{partition}  instance is a 'YES'-instance if and 
	only if
	$\foptdpm>0$ on the constructed instance of \DPM.
	
	First, assume that there exists a subset $I\subseteq S$ 
	such that $\sum_{i \in I} b_i = \sum_{i \in S\setminus I} b_i = L$.
	We define two arc-disjoint paths $P_1$ and $P_2$ by
	\begin{align*}
	  P_1 := \{a_i^*\in A^* \defsep i \in I \} ~ \cup ~ 
	    	\{a_i' \in A' \defsep i \in [n]\setminus I\}, \quad
	  P_2 := A\setminus P_1.
	\end{align*}    
	The travel times of these paths are given by
	\begin{align*}
	  \hspace{-0.5cm}
	  \tau_{P_1}  = \sum_{\substack{a_i^* \in A^*:\\i \in I}} \tau_{a_i^*} 
	    + \sum_{\substack{a_i' \in A':\\i \in [n]\setminus I}}\tau_{a_i'} 
      = n \bar{b}\sum_{i\in I}b_i+(n \bar{b} +1)\sum_{i \in [n]\setminus I} b_i 
      = (2n \bar{b}+1 ) L
	  = T-1,
	\end{align*}
	and analogously $\tau_{P_2}=T-1$.
	Let $x$ be the solution defined by $x(P_1,\theta)=x(P_2,\theta)=1$ for all 
	$\theta\in \T$.
	Without any arc being delayed, two units of flow arrive at the sink within 
	the time horizon $T$. 
	By delaying any arc $\tilde{a}$, the travel time of the path including 
	$\tilde{a}$ changes into $2T-1$, such that no flow arrives at the sink $t$ 
	via that path in time. 
	However, one unit of flow arrives at $t$ via the path not including 
	$\tilde{a}$ and thus $\foptdpm\geq f_R(x)=1>0$.
	
	Second, assume that $\foptdpm>0$.
	This implies that there exist two arc-disjoint paths $P_1$ and $P_2$ 
	with travel times at most $T-1$, such that the flow sent on either path 
	reaches the sink within $T$ if the other path is delayed. 
	We define
	$I := \left\{i \in [n] : a^*_i \in P_1\right\}$. 
	Assume for contradiction that 
	$\sum_{i \in I} b_i = L + \delta$ for some $\delta \neq 0$ 
	and thus $\sum_{i \in S\setminus I} b_i = L - \delta$. 
	Then, the travel times of $P_1$ and $P_2$ are given by
	\begin{align*}
	  	\tau_{P_1} & = (n \bar{b})\sum_{i\in I}b_i + (n \bar{b} +1)
	  		\sum_{i\in [n]\setminus I}b_i
	    \\ & = (n \bar{b})(L+\delta) +  (n \bar{b} +1)(L-\delta)
	       = (2n \bar{b} +1) ~L - \delta
	       = (T-1)-\delta
	\end{align*}
	and similarly,
	\begin{align*}
	  \tau_{P_2} & = (n \bar{b})\sum_{i\in [n]\setminus I} b_i 
	    + (n \bar{b}+1)\sum_{i\in I} b_i 
	         = (2n \bar{b} +1) ~L + \delta
	         = (T-1)+\delta.
	\end{align*}
	This contradicts the existence of two arc-disjoint paths 
	$P_1$ and $P_2$ with $\tau_{P_1}, \tau_{P_2} \leq (T-1)$. 
	Hence, $\sum_{i \in I} b_i = \sum_{i \in [n]\setminus I} b_i = L$, 
	which corresponds to a 'YES'-instance of the \textsc{partition} problem.
	Thus, computing an optimal solution to \DPM ~for $\Gamma=1$ is  
	NP-hard, which concludes the first part of the proof. \\	
	
	We now extend the result to any fixed $\Gamma\geq 1$.
	Given $\Gamma > 1$, we add $\Gamma-1$ parallel $s$-$t$-arcs to $G$. 
	Each of these additional arcs has capacity $u\equiv 2$, 
	travel time $\tau\equiv 0$ and delay $\Delta\tau\equiv T$, 
	i.e. the capacity of these arcs is larger than the capacity of any arc in 
	$G$ and the delay of an arc again corresponds to arc failure. 
	Let $\xoptdpm$ be an optimal solution to \DPM ~on this extended instance 
	and let $\lambda^*$ be the maximum amount of flow that can be deleted by delaying 
	$\Gamma$ arcs. By maximality of $\lambda^*$ and $\xoptdpm$, in the worst case, 
	all of the $\Gamma-1$ parallel $s$-$t$-arcs and one arc out of the original arcs of $G$ are delayed. 
	Hence, by solving \DPM ~on the extended instance with additional arcs for 
	some fixed $\Gamma$, we solve \DPM ~on $G$ for $\Gamma=1$ and 
	thus solve the according \textsc{partition} problem. 
	It follows that solving \DPM ~is NP-hard for any fixed $\Gamma\geq 1$.
\end{proof}

The optimal solutions to the instances in the proof of \Cref{thm:DPM:gamma1} 
are temporally repeated flows. 
\begin{corollary}\label{cor:DPM:fixedgamma-temprep}
  Computing a robust optimal temporally repeated flow to 
  \DPM ~is NP-hard for any fixed $\Gamma\geq 1$.
\end{corollary}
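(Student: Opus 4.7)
The plan is to reuse the same reduction from \textsc{partition} as in the proof of \Cref{thm:DPM:gamma1} without modification, and simply observe that both directions of the equivalence survive when we restrict the solution space to temporally repeated flows. Let $\foptdpmtr$ denote the maximum robust value attained by a temporally repeated feasible solution to \DPM ~on the constructed instance.

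For the forward direction, suppose the \textsc{partition} instance is a 'YES'-instance. In the proof of \Cref{thm:DPM:gamma1} one explicitly constructs two arc-disjoint $s$-$t$-paths $P_1,P_2$ and a feasible flow $x$ with $x(P_1,\theta)=x(P_2,\theta)=1$ for every $\theta\in\T$ whose robust value is at least $1$. Since the inflow rate on each path is constant over $\T$, this solution is already temporally repeated by definition, so $\foptdpmtr\geq 1>0$.

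For the backward direction, suppose $\foptdpmtr>0$. Every temporally repeated flow is in particular a feasible solution to \DPM, hence $\foptdpm\geq\foptdpmtr>0$. The backward argument in the proof of \Cref{thm:DPM:gamma1} does not use anything about the structure of the witnessing flow beyond its positive robust value and feasibility; it only extracts two arc-disjoint $s$-$t$-paths of travel time at most $T-1$ from the existence of any positive-value solution and then derives the partition $I$ via the contradiction argument on $\tau_{P_1},\tau_{P_2}$. Applying that argument to any positive-value temporally repeated flow therefore yields a valid partition, so the \textsc{partition} instance is a 'YES'-instance. The extension from $\Gamma=1$ to arbitrary fixed $\Gamma\geq 1$ proceeds exactly as in \Cref{thm:DPM:gamma1}: the $\Gamma-1$ additional parallel $s$-$t$-arcs can carry a constant (and hence temporally repeated) inflow rate of $2$ each without altering the worst-case scenario, so the reduction preserves temporal repetition.

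There is no real obstacle beyond these two observations; the corollary is essentially a meta-remark about the proof of \Cref{thm:DPM:gamma1}, namely that the witness flow exhibited in the \textsc{partition}-to-\DPM ~direction happens to be temporally repeated, while the converse direction is agnostic to the form of the flow. Hence computing a robust optimal temporally repeated flow is NP-hard for every fixed $\Gamma\geq 1$.
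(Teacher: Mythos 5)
Your proposal is correct and takes essentially the same route as the paper, whose entire argument for this corollary is the one-line observation preceding it that the optimal (witness) solutions in the instances from \Cref{thm:DPM:gamma1} are temporally repeated; you simply spell out the two directions — the constant-rate witness in the 'YES' case is temporally repeated, and any positive-value temporally repeated flow is in particular a feasible \DPM ~solution, so the backward argument of \Cref{thm:DPM:gamma1} applies unchanged, as does the extension to fixed $\Gamma>1$. If anything, your version is slightly cleaner because it only needs the optimal values to cross the same threshold rather than the stronger claim that the robust optimal solutions themselves are temporally repeated.
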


On the other hand, \cite{gottschalk} show that an optimal temporally repeated flow to \DPM ~can be computed in polynomial time on instances satisfying the 
T-bounded path length property. 
This property on an instance requires that the travel time including the possible delay of each \stpath ~is less than or equal to the time horizon $T$.

\subsection*{Complexity of \DAM}
The static version, \SAM, is solvable in polynomial time, as shown in \cite{bertsimas}. 
We prove a similar result for the dynamic case.
\begin{theorem}\label{thm:DAM:reform}
	A robust optimal solution $\xoptdam \in \R_{\geq0}^{|A|T}$ to \DAM ~can 
	be computed by solving the linear program
	\begin{subequations}\label{dynamic:arc-reform}
	\begin{alignat}{2}
    \max_{\mathclap{x,\eta, \lambda,\mu,\nu}} \quad
    	&\sum_{\mathclap{a\in\dinA(t)}} \hs{0.8em}
      	\sum_{\theta\in\T}~x(a,\theta-\tau_a)-
      	\sum_{a\in\dinA(t)}\nu(a)-\Gamma\mu\\[5pt]
    \st \quad
    	&\sum_{\mathclap{a\in\dinA(v)}}~~(x(a,\theta -\tau_a)
    		- \lambda_\theta(a))-\Gamma \eta_{v,\theta} 
    		\geq ~~\sum_{\mathclap{a\in\doutA(v)}} ~~x(a,\theta), \quad
    		&&\forall v\in V\setminus\{s,t\},\theta\in\T,\\
    	&\eta_{v,\theta}+\lambda_{\theta}(a)
    		\geq x(a,\theta-\tau_a)-x(a,\theta-\tau_a-\Delta \tau_a),\qquad
    		&&\forall v\in V\setminus \{s,t\},a\in\dinA(v), \theta \in \T, \\
    	&\mu+\nu(a)\geq \sum_{i\in [\Delta\tau_a]}x(a,T-\tau_a-(i-1)),
    		&&\forall a\in\dinA(t), \\
    	&x(a,\theta)\leq u_a, &&\forall a\in A,\theta\in\T, \\[3pt]
    	&x,\eta,\lambda,\mu,\nu \geq 0,&&
	\end{alignat}
	\end{subequations}
	with $\eta\in\R^{T(|V|-2)}$, 
	$\lambda\in\R^{T(|A|-|\dinA(t)|)}$, 
	$\mu \in \mathbb{R}$ , and
	$\nu \in \mathbb{R}^{|\dinA(t)|}$. 
	
	Hence, \DAM ~can be solved in time polynomial in the size of the input 
	$(G,\Gamma,T)$.
\end{theorem}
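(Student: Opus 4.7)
The plan is to observe that both the inner $\min_{z\in\Lambda}$ in the objective of \DAM and the universally quantified robust flow conservation \eqref{dynamic:arc:rob-constraint} reduce to small fractional-knapsack problems in $z$, each of which admits an exact LP-dualization. Substituting those duals produces exactly \eqref{dynamic:arc-reform}.

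First, I would rewrite the two $z$-dependent expressions to separate $x$ from $z$. Using $x(a,\theta_z(a))=(1-z(a))\,x(a,\theta-\tau_a)+z(a)\,x(a,\theta-\tau_a-\Delta\tau_a)$, the robust flow conservation at $(v,\theta)$ becomes $\sum_{a\in\dinA(v)}x(a,\theta-\tau_a)-\max_{z\in\Lambda}\sum_{a\in\dinA(v)}z(a)\,d_{v,\theta,a}(x)\geq\sum_{a\in\doutA(v)}x(a,\theta)$, with $d_{v,\theta,a}(x)=x(a,\theta-\tau_a)-x(a,\theta-\tau_a-\Delta\tau_a)$. Similarly, the amount of flow lost on $a\in\dinA(t)$ when $z(a)=1$ equals $c_a(x):=\sum_{i\in[\Delta\tau_a]}x(a,T-\tau_a-(i-1))$, so the $\min_{z\in\Lambda}$ in the objective equals $\sum_{a\in\dinA(t)}\sum_{\theta\in\T} x(a,\theta-\tau_a)-\max_{z\in\Lambda}\sum_{a\in\dinA(t)}z(a)\,c_a(x)$.

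Second, I would dualize each inner maximum. All of them have the same polytope $\Lambda$, whose LP relaxation $\{z\in[0,1]^{|A|}:\mathbf 1^\top z\leq\Gamma\}$ has integral vertices (the constraint matrix is totally unimodular), so its LP value equals the integer maximum for any cost vector regardless of signs. The LP dual of $\max\{c^\top z:0\leq z\leq\mathbf 1,\,\mathbf 1^\top z\leq\Gamma\}$ is $\min\{\Gamma\mu+\sum_a\nu(a):\mu,\nu\geq 0,\,\mu+\nu(a)\geq c_a\;\forall a\}$. Introducing one such dual pair $(\mu,\nu)$ for the objective and one $(\eta_{v,\theta},\lambda_\theta(\cdot))$ for each $(v,\theta)$-constraint produces the upper bounds $\Gamma\mu+\sum_a\nu(a)$ and $\Gamma\eta_{v,\theta}+\sum_a\lambda_\theta(a)$ appearing in \eqref{dynamic:arc-reform}.

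Third, I would verify the equivalence in both directions: any $x$ feasible for \DAM extends to a feasible point of \eqref{dynamic:arc-reform} with the same objective by choosing optimal dual multipliers in every inner LP, and conversely the $x$-component of any feasible $(x,\eta,\lambda,\mu,\nu)$ of \eqref{dynamic:arc-reform} satisfies the constraints of \DAM with \DAM-objective at least the LP value, since $\Gamma\mu+\sum_a\nu(a)$ over-estimates the worst-case loss at $t$ and $\Gamma\eta_{v,\theta}+\sum_a\lambda_\theta(a)$ over-estimates the worst-case deficit at $(v,\theta)$. Since \eqref{dynamic:arc-reform} has $\mathcal O(T(|A|+|V|))$ variables and constraints, it is solvable in polynomial time, which produces the desired $\xoptdam$. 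The main care-points are purely bookkeeping — the convention $x(a,\theta)=0$ for $\theta\notin\T$ and the restriction of $\lambda$-variables to arcs in $\dinA(v)$ for $v\in V\setminus\{s,t\}$ (hence the stated dimension $T(|A|-|\dinA(t)|)$) — together with noting that the integrality argument above remains valid even when some $d_{v,\theta,a}(x)$ are negative, in which case the corresponding $z(a)$ is simply $0$ at the optimum of the inner LP.
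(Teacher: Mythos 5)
Your proposal is correct and follows essentially the same route as the paper's proof: rewrite the binary dependence on $z$ via the identity $x(a,\theta_z(a))=x(a,\theta-\tau_a)-z(a)\bigl(x(a,\theta-\tau_a)-x(a,\theta-\tau_a-\Delta\tau_a)\bigr)$, note that the inner maximization over the cardinality-constrained binary set can be relaxed to its LP relaxation without changing the value, dualize each inner LP to obtain the multipliers $(\eta,\lambda)$ and $(\mu,\nu)$, and substitute to obtain the polynomial-size linear program. Your explicit remarks on integrality of the relaxation (even with negative coefficients) and on the two-directional equivalence via weak/strong duality only make explicit what the paper states tersely.
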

The proof utilizes standard techniques from robust optimization, such as dualizing the inner minimization problems, and can be found in \Cref{proofthm:DAM:reform}.

\subsection*{Complexity of \DGM}
For a feasible solution $x$ to \DPM ~or \DGM, we denote the first point in time at 
which flow reaches $t$ in any scenario, i.e. the point of earliest arrival, by $\thea(x)$:
\begin{align*}
  \thea(x)\coloneqq \min\{\theta\in\T\defsep 
    \forall z\in\Lambda~\exists P_z\in\dinPt: 
    x(P_z,\theta-\tau_{P_z}- \Delta_z(P_z))>0\}
\end{align*}
with $\thea(x):=\infty$ if no such $\theta\in\T$ exists.
Intuitively, it is obvious that considering \DGM ~compared to \DPM ~does not
speed up the flow since the only difference is that we allow flow to 
'seep away' at intermediate nodes in \DGM. 
We will use this idea as auxiliary lemma in the following and prove it formally.
\begin{lemma}\label{lemma:dynamic:earliestarrival}
Let $\xdgm$ be a feasible solution to \DGM. 
Then, there exists a feasible solution $\xdpm$ to \DPM ~on the same instance with 
$\thea(\xdpm)\leq \thea(\xdgm)$.
\end{lemma}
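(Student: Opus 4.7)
The plan is: for each scenario $z\in\Lambda$, construct a simple $s$-$t$-path $P_z\in\Pa$ and start time $\theta_z\in\T$ satisfying $\theta_z+\tau_{P_z}+\Delta_z(P_z)=\theta^*:=\thea(\xdgm)$, and define $\xdpm$ by placing a sufficiently small $\epsilon>0$ on each pair $(P_z,\theta_z)$ and zero elsewhere. The case $\theta^*=\infty$ is trivial ($\xdpm\equiv 0$), so assume $\theta^*<\infty$.

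For each scenario $z$, I would build $P_z$ by tracing $\xdgm$ backwards from $t$. By the definition of $\thea$, there is $Q^{(0)}\in\dinPt$ with $\xdgm(Q^{(0)},\theta^*-\tau_{Q^{(0)}}-\Delta_z(Q^{(0)}))>0$. Whenever the current subpath starts at some $v\neq s$, the positive outflow on this subpath at $v$ combined with the robust flow conservation \eqref{dynamic:general-rob-constraint} at $v$ in scenario $z$ yields an incoming subpath $R\in\dinP(v)$ with positive $\xdgm$-value at the matching earlier time; prepend $R$ and iterate. Because $s$ has no incoming arcs, no subpath in $\barP$ ends at $s$, so the iteration must terminate at a finite step with starting node equal to $s$. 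Concatenating the chosen subpaths yields an $s$-$t$-walk $W$ in $G$; its travel time in scenario $z$ equals $\theta^*-\theta_{\mathrm{start}}$, where $\theta_{\mathrm{start}}\geq 1$ since $\xdgm$ is supported only on times in $\T$. Standard loop removal by repeated node occurrences converts $W$ into a simple $s$-$t$-path $P_z\in\Pa$ with $\tau_{P_z}+\Delta_z(P_z)\leq\theta^*-\theta_{\mathrm{start}}$; setting $\theta_z:=\theta^*-\tau_{P_z}-\Delta_z(P_z)$ then gives $\theta_z\in[\theta_{\mathrm{start}},\theta^*]\subseteq\T$, as required.

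Finally, choose $\epsilon>0$ with $\epsilon|\Lambda|\leq\min_{a\in A}u_a$ and define $\xdpm(P_z,\theta_z):=\epsilon$ for every $z\in\Lambda$ and $\xdpm\equiv 0$ elsewhere. Any DPM capacity constraint collects at most $|\Lambda|$ nonzero contributions of size $\epsilon$, so feasibility of $\xdpm$ follows. Since $\xdpm(P_z,\theta^*-\tau_{P_z}-\Delta_z(P_z))=\epsilon>0$ for every $z\in\Lambda$, the defining condition of $\thea(\xdpm)$ is met at $\theta=\theta^*$, giving $\thea(\xdpm)\leq\theta^*=\thea(\xdgm)$. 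The main obstacle is the backward construction combined with loop removal: termination of the iteration relies on the structural fact that no subpath ends at $s$, while the validity of $\theta_z\in\T$ after loop removal uses that cutting out cycles can only shorten travel time, so the adjusted departure time stays at or after the in-$\T$ starting time of the original walk.
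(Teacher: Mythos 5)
Your proposal is correct in substance and rests on the same structural fact as the paper's proof: for every scenario $z$, robust flow conservation forces the existence of an \stpath~whose scenario-$z$ travel time permits departure at some time in $\T$ and arrival by $\thea(\xdgm)$. The difference lies in how the \DPM~witness is built. The paper avoids any per-scenario construction of the flow values: it sets $\xdpm(P,\theta)=c$ for \emph{all} $P\in\Pa$ and all $\theta\in\T$, with $c>0$ the largest feasible constant, so feasibility and positivity on whichever early path exists are automatic; it then argues for the earliest-arriving subpath $P_z\in\dinP(t)$ that a sufficiently short $s$-$v$-prefix $P'_z$ exists and concatenates. Your sparse $\epsilon$-flow on explicitly constructed pairs $(P_z,\theta_z)$ works just as well (the bound $\epsilon|\Lambda|\le\min_{a\in A}u_a$ dominates every capacity constraint, since there are at most $|\Lambda|$ nonzero entries), and your explicit loop removal, using that travel times and delays are nonnegative, addresses a point the paper glosses over when it treats $P'_z\cup P_z$ as a simple path.

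The one step that needs repair is the termination of your backward trace. Because $\tau_a=\Delta\tau_a=0$ is allowed, the departure times along the trace need not strictly decrease, so the observation that no subpath in $\barP$ ends at $s$ only shows the iteration can stop nowhere except $s$, not that it stops: in principle the trace could circulate forever among zero-length flow-carrying subpaths within a single time step. The claim you need is nevertheless true and can be secured, e.g., by summing the conservation constraints \eqref{dynamic:general-rob-constraint} at the fixed time over the set of nodes that are backward-reachable through zero-length flow-carrying subpaths: the positive outflow leaving that set (toward $t$, toward a later time, or on the already-built walk) makes a self-sustaining zero-length circulation contradict conservation, so the trace must escape to an earlier time or to $s$; an induction on $\theta$ then finishes the argument. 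Note that the paper's own proof compresses exactly this recursion into a single appeal to conservation at $v$, so your write-up is, if anything, more explicit about where the work lies.
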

\begin{proof}
  We define a feasible solution $\xdpm$ to \DPM ~by
  \begin{align*}
    \xdpm(P,\theta):=c \quad\forall P\in\mathcal{P},~\theta\in\T,
  \end{align*}
  where $c>0$ is the maximum value such that $\xdpm$ is feasible.
  Such a $c>0$ exists since all capacities $u$ are positive.
  
  Let $\xdgm$ be a feasible solution to \DGM. 
  Let $z\in\Lambda$ be arbitrary and 
  ${P_z\in\dinP(t)\subseteq\barP}$ be a path on which flow of $\xdgm$ arrives 
  at the sink at the earliest time in scenario $z$.
  If $P_z$ is an \stpath, it follows directly that 
  $\xdpm(P_z,\theta)=c$ for all $\theta\in\T$ for which $\xdgm(P_z,\theta)>0$. 
  If $P_z$ is a $v$-$t$-path for any $v\in V\setminus\{s,t\}$, 
  there exists an $s$-$v$-path $P'_z$ with 
  \begin{align*}
    1+\tau_{P'_z}+ \Delta_z(P'_z))\leq \thea(\xdgm)-\tau_{P_z}- \Delta_z(P_z),
  \end{align*}  
  as otherwise $\xdgm$ would violate robust flow conservation  
  \eqref{dynamic:general-rob-constraint} at $v$.
  
  Hence, for the \stpath ~$P''_z=P'_z\cup P_z$ we have
  \begin{align*}
    \thea(\xdgm)-\tau_{P''_z}-\Delta_z(P''_z) 
      = \thea(\xdgm)-\tau_{P_z}-\Delta_z(P_z)-\tau_{P'_z}-\Delta_z(P'_z)\geq 1
  \end{align*}
  with $\xdpm(P''_z,\thea(\xdgm)-\tau_{P''_z}-\Delta_z(P''_z))=c$.
  Thus, for each $z\in\Lambda$ we find an \stpath ~fulfilling the property 
  in the definition of $\thea(\xdgm)$ for $\xdpm$. 
  It follows that $\thea(\xdpm)\leq \thea(\xdgm)$.
\end{proof}

We now extend the complexity result of \DPM ~for fixed $\Gamma\geq 1$ to \DGM.
\begin{theorem}\label{thm:DGM:fixedgamma}
	\DGM ~is NP-hard for fixed $\Gamma\geq 1$.
\end{theorem}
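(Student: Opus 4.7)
I would reduce from \textsc{partition} by reusing the \DPM ~instance $(G,\Gamma,T)$ constructed in the proof of \Cref{thm:DPM:gamma1} (including, for fixed $\Gamma>1$, the augmentation by $\Gamma-1$ parallel $s$-$t$-arcs). The goal is to show that $\foptdgm>0$ on this instance if and only if $\foptdpm>0$; combined with the equivalence already established in \Cref{thm:DPM:gamma1}, this gives NP-hardness of \DGM ~for every fixed $\Gamma\geq 1$. The forward implication $\foptdpm>0\Rightarrow\foptdgm>0$ is immediate since every feasible solution to \eqref{dynamic:path} extends by zero to a feasible solution to \eqref{dynamic:general} with the same robust objective value, as discussed after the introduction of \DGM.

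For the reverse implication, I would adapt the construction from the proof of \Cref{lemma:dynamic:earliestarrival}. Given a feasible $\xdgm$ with $f_R(\xdgm)>0$, define $\xdpm(P,\theta):=c$ for every $P\in\Pa$ and $\theta\in\T$, where $c>0$ is the largest value keeping $\xdpm$ feasible for \eqref{dynamic:path} (positive because all capacities are positive). Fix any scenario $z\in\Lambda$. Since $f_R(\xdgm)>0$, there exist a path $P_z\in\dinPt$ and $\theta_z\in\T$ with $\xdgm(P_z,\theta_z)>0$ and $\theta_z+\tau_{P_z}+\Delta_z(P_z)\leq T$. If $P_z$ does not start at $s$, I would invoke the robust flow conservation constraint \eqref{dynamic:general-rob-constraint} at the start node of $P_z$ and time $\theta_z$ in scenario $z$ to obtain a preceding subpath $Q$ ending at that node with $\xdgm(Q,\theta_z-\tau_Q-\Delta_z(Q))>0$ and entry time in $\T$. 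Iterating this step terminates at $s$ because the instance is a DAG and every non-source node has positive in-degree, yielding an $s$-$t$-path $P_z''$ with $\tau_{P_z''}+\Delta_z(P_z'')\leq T-1$. Consequently, $\xdpm$ sends at least $c>0$ units of flow via $P_z''$ in scenario $z$, so $f_R(\xdpm)\geq c>0$, as required.

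The main obstacle is carrying out the iterative back-construction cleanly: at every step the robust flow conservation constraint supplies a positive-flow predecessor, but one must verify that the accumulated entry time stays in $\T$ and that the concatenation yields the travel-plus-delay bound $T-1$ in the fixed scenario $z$. Both facts follow by telescoping, since the entry time of each successive predecessor is the previous entry time minus that predecessor's travel plus its delay in $z$ and remains at least $1$ by positivity of $\xdgm$ on $\T$, while the final arrival time is exactly $\theta_z+\tau_{P_z}+\Delta_z(P_z)\leq T$. A secondary technical point is that the augmentation by parallel $s$-$t$-arcs used to lift the argument to fixed $\Gamma>1$ transfers to \DGM ~without change, since these added arcs form single-arc $s$-$t$-paths that appear identically in \DPM ~and \DGM, so the worst-case scenario and the structural equivalence to the $\Gamma=1$ subproblem remain the same.
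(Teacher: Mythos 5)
Your proposal is correct and follows essentially the same route as the paper: reduce via the instance of \Cref{thm:DPM:gamma1}, note the trivial direction by zero-extension, and for the converse construct the uniform constant-rate \stpath~flow and trace positive flow back to $s$ through robust flow conservation \eqref{dynamic:general-rob-constraint}. The only difference is that you re-derive this back-tracing argument inline (scenario by scenario, iteratively) instead of invoking \Cref{lemma:dynamic:earliestarrival} via the earliest-arrival time $\thea$, which is the same idea in slightly different packaging.
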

\begin{proof}
  Let $\xoptdgm$ be an optimal solution to \DGM ~on the instance from 
  \Cref{thm:DPM:gamma1} for $\Gamma=1$.
  If $\foptdgm>0$, we know that $\theta_{\textit{ea}}(\xoptdgm)\leq T$.
  By \Cref{lemma:dynamic:earliestarrival}, it follows that there exists a 
  feasible solution $\xdpm$ to \DPM ~with 
  $\theta_{\textit{ea}}(\xdpm)\leq \theta_{\textit{ea}}(\xoptdgm)$.
  Thus, in an optimal solution to \DPM, flow reaches the sink in time and 
  $\foptdpm>0$.
  On the other hand, if $\foptdgm=0$, we directly obtain $\foptdpm=0$ since 
  $\xoptdpm$ is feasible for \DGM. 
  Hence, by solving \DGM ~on the instance from \Cref{thm:DPM:gamma1} for 
  $\Gamma=1$, we can solve the according partition problem.
  
  Analogously to \DPM, we can extend the instance for $\Gamma=1$ to any fixed 
  $\Gamma\geq 1$.
\end{proof}

The previous results show that \DPM ~and \DGM ~are NP-hard even for $\Gamma=1$ 
although their static counterparts are solvable in polynomial time for that special case.

If $\Gamma$ is part of the input, \Cref{thm:static:general-gammainput} can be extended to the dynamic setting since \SGM ~is a special case of \DGM.

\begin{theorem}\label{thm:DGM:gammainputNP}
	\DGM ~is strongly NP-hard if $\Gamma$ is part of the input.
\end{theorem}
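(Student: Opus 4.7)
The plan is to piggy-back on \Cref{thm:static:general-gammainput}, which states that \SGM{} is strongly NP-hard when $\Gamma$ is part of the input. The key observation, already recorded in the paper after the statement of \DGM, is that \SGM{} arises as a special case of \DGM{} via a simple instance transformation: given any instance of \SGM, set $\tau\equiv 0$, $\Delta\tau\equiv 1$, and $T=1$. Because the time horizon equals one and every arc has travel time zero, a unit of flow sent on a subpath $P$ reaches the end of $P$ within the horizon if and only if no arc of $P$ is delayed; that is, delay acts exactly like arc failure in the static model.

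First I would write down this transformation formally and show that it is polynomial in the size of the original \SGM{} instance (only the parameters $\tau,\Delta\tau,T$ are added, with constant bit-length). Next I would verify that the feasible region and objective of the resulting \DGM{} instance coincide, up to the trivial time-indexing with $\theta=1$, with those of the original \SGM{} instance: robust flow conservation \eqref{dynamic:general-rob-constraint} at $\theta=1$ reduces to \eqref{static:general-rob-constraint}, the capacity constraints collapse to \eqref{static:general-capacity}, and the objective \eqref{dynamic:general-obj} counts exactly the flow on paths in $\dinPt$ whose arcs are not delayed, matching the robust static objective in \SGM. Consequently, an optimal solution to the constructed \DGM{} instance yields, in polynomial time, an optimal solution to the original \SGM{} instance with the same value.

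Finally, I would conclude: since deciding \SGM{} with $\Gamma$ as part of the input is strongly NP-hard by \Cref{thm:static:general-gammainput}, and since the reduction just described is polynomial and value-preserving while using only integer data whose magnitudes are polynomially bounded in the input size, \DGM{} with $\Gamma$ part of the input is strongly NP-hard as well. There is no real obstacle here beyond carefully checking that the discretized dynamic constraints indeed degenerate to the static ones at $T=1$ with $\tau\equiv 0$, $\Delta\tau\equiv 1$; the bulk of the hardness is inherited directly from the static result.
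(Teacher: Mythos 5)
Your proposal is correct and matches the paper's argument: the paper proves this theorem precisely by invoking the embedding of \SGM{} into \DGM{} via $\tau\equiv 0$, $\Delta\tau\equiv 1$, $T=1$ (stated in \Cref{sec:dynamic:models}) together with \Cref{thm:static:general-gammainput}. Your additional verification that the dynamic constraints and objective collapse to the static ones at $T=1$ just makes explicit what the paper asserts in its remark on static models being special cases of the dynamic ones.
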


\begin{remark}
  Note that the graph in the proof of \Cref{thm:DPM:gamma1} is a series-parallel 
  graph and has unit capacities on all arcs. 
  Thus, \DPM ~and \DGM ~are NP-hard for fixed $\Gamma= 1$ even when 
  restricted to instances with $u\equiv 1$. 
  This stands in contrast to the static setting where \SPM ~and \SGM ~are 
  solvable in polynomial time when restricted to instances with $u\equiv 1$, 
  even if $\Gamma$ is part of the input.
\end{remark}

\subsection*{Integral flows}
\label{sec:dynamic-complexity-integral}
While there are no results on the complexity of integral \DAM ~so far, 
\cite{gottschalk} prove that integral \DPM ~is NP-hard and inapproximable within 
any factor, even for $\Gamma=1$. 
For integral \DGM, we obtain NP-hardness by 
using the instance in the proof of \Cref{thm:DPM:gamma1}.
\begin{theorem}
	For fixed $\Gamma\geq 1$, integral \DGM ~is NP-hard.
\end{theorem}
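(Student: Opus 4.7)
The plan is to reuse essentially verbatim the partition reduction constructed in the proof of \Cref{thm:DPM:gamma1} and then argue that the integrality constraint does not change the equivalence between partition and positive optimum. Recall that the instance constructed there has unit capacities on every arc of $G$, and delays equal to $T$ (so delaying an arc functionally removes it). In particular, every flow feasible for integral \DGM takes values in $\{0,1\}$ on each (subpath,time)-pair on the arcs of $G$.

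For the forward direction, starting from a \textsc{partition} YES-instance, I take the two arc-disjoint \stpaths{} $P_1,P_2$ constructed in \Cref{thm:DPM:gamma1} and set $x(P_1,\theta)=x(P_2,\theta)=1$ for all $\theta\in\T$ on all other subpaths the value $0$. This is an integer, feasible flow for \DGM (unit capacity constraints are satisfied, and robust flow conservation holds trivially since flow is sent only on complete \stpaths{}), and its robust value is exactly $1$. Hence $\foptdgm\geq 1>0$ in the integer setting.

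For the reverse direction, assume the integer optimum of \DGM on the constructed instance is positive; then, by integrality, it is at least $1$, so in particular it is strictly positive. Now I apply \Cref{lemma:dynamic:earliestarrival}: from the optimal integer $\xoptdgm$ with $\thea(\xoptdgm)\leq T$, I obtain a (fractional) feasible solution $\xdpm$ to \DPM{} with $\thea(\xdpm)\leq \thea(\xoptdgm)\leq T$, which gives $\foptdpm>0$. At this point the argument from \Cref{thm:DPM:gamma1} applies unchanged: $\foptdpm>0$ implies the existence of two arc-disjoint \stpaths{} of travel time at most $T-1$, which yields an index set $I\subseteq[n]$ with $\sum_{i\in I}b_i=L$, i.e.\ a YES-instance of \textsc{partition}. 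Together with the forward direction, this proves NP-hardness of integral \DGM{} for $\Gamma=1$.

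To extend to any fixed $\Gamma\geq 2$, I mimic the extension at the end of the proof of \Cref{thm:DPM:gamma1}: add $\Gamma-1$ parallel $s$-$t$-arcs with integer capacity $2$, travel time $0$, and delay $T$. Since these arcs dominate the original arcs in capacity but are subject to the same failure mechanism, the worst-case adversary in any integer optimum will delay all $\Gamma-1$ extra arcs together with exactly one arc of $G$, so solving integer \DGM{} on the extended instance with budget $\Gamma$ is equivalent to solving integer \DGM{} on $G$ with budget $1$. The main obstacle is the (small) point of ensuring that the YES-direction construction is genuinely integer-feasible for \DGM{} (not merely for \DPM{}), and that the partition argument carries through even though \DGM{} permits flow on proper subpaths; both are handled by the observation that the constructed witness uses only \stpaths{} and by \Cref{lemma:dynamic:earliestarrival}, which lets subpath flow be replaced by \stpath{} flow without worsening the earliest arrival time.
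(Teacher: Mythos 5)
Your proposal is correct and follows essentially the same route as the paper's proof: reuse the instance from \Cref{thm:DPM:gamma1}, observe that the YES-direction witness is an integral \stpath{} flow, use \Cref{lemma:dynamic:earliestarrival} to pass from a positive integral \DGM{} optimum back to a positive (fractional) \DPM{} optimum and hence to a \textsc{partition} YES-instance, and extend to fixed $\Gamma\geq 2$ exactly as in \Cref{thm:DPM:gamma1}. You merely spell out details the paper leaves implicit, so there is nothing to add.
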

\begin{proof}
	Recall that in the proof of \Cref{thm:DPM:gamma1} we have shown
	that the optimal value of \DPM ~on a constructed instance with $\Gamma=1$ 
	is greater than zero if and only if a given \textsc{partition} instance 
	is a 'YES'-instance. 
	On this constructed instance, there is an optimal solution to \DPM ~which 
	has only integral values.
	Thus, the optimal value of integral \DPM ~on the constructed instance 
	is greater than zero if and only if the \textsc{partition} instance is 
	a 'YES'-instance.
	 Applying the idea from \Cref{lemma:dynamic:earliestarrival}, it follows directly
	that also the optimal value of integral \DGM ~on the constructed instance 
	is greater than zero if and only if the \textsc{partition} instance is 
	a 'YES'-instance.
	The extension to $\Gamma>1$ works analogously as in the proof of 
	\Cref{thm:DPM:gamma1}.
\end{proof}


\bookmarksetup{depth=subsection}
\subsection{Comparison of the solution quality between the robust dynamic flow models} \label{sec:dynamic:bounds}

In the following, we briefly compare the dynamic models to each other.

We can extend the instances in the proofs
of \Cref{Prop:gapSPM_SAM} and \Cref{Prop:gapSAM_SPM} to the dynamic setting by defining
$\tau\equiv 0$ and $\Delta\tau\equiv T$ with $T\in\N$. With analogous proofs, we obtain the following corollaries.
 
\begin{corollary}\label{prop:gapDPM_DAM}
  For any $\Gamma,T\in\N$ and $\alpha\in\R$ there are instances such that 
	${\foptdpm > \alpha \foptdam}$ and consequently ${\foptdgm>\alpha \foptdam}$.
\end{corollary}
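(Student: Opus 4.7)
The plan is to port the construction from the proof of \Cref{Prop:gapSPM_SAM} directly to the dynamic setting, using the data suggested immediately before the statement: $\tau \equiv 0$ and $\Delta\tau \equiv T$. Concretely, I would work with the graph on node set $V = \{s, v_1, \ldots, v_{\Gamma+1}, t\}$ and parallel arcs $a'_i = (s, v_i)$, $a''_i = (v_i, t)$ of unit capacity for $i \in [\Gamma+1]$. The role of choosing $\Delta\tau_a = T$ is to make a delayed arc behave like a failed arc within the time horizon: any flow traversing such an arc has its effective inflow time pushed outside $\T$.

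First I would argue that $\foptdam = 0$. Fix $i \in [\Gamma+1]$ and consider the scenario $z$ in which only $a'_i$ is delayed, which is admissible since $\Gamma \geq 1$. Because $\dinA(v_i) = \{a'_i\}$ and $\doutA(v_i) = \{a''_i\}$, the robust flow conservation constraint \eqref{dynamic:arc:rob-constraint} at $v_i$ collapses to $x(a'_i, \theta - T) \geq x(a''_i, \theta)$ for every $\theta \in \T$. Since $\theta - T \leq 0$, the convention $x(a, \cdot) \equiv 0$ outside $\T$ forces $x(a''_i, \theta) = 0$ for every $\theta$. As $\dinA(t) = \{a''_1, \ldots, a''_{\Gamma+1}\}$, every feasible solution to \DAM ~has objective value $0$, so $\foptdam = 0$.

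Next I would exhibit a feasible solution $x$ to \DPM ~of robust value at least $T$. Set $x(P_i, \theta) = 1$ for each $P_i = \{a'_i, a''_i\}$ and each $\theta \in \T$. The capacity constraint reduces to $1 \leq 1$ since each arc lies on a unique path. For the objective, under any $z \in \Lambda$ a delayed path contributes an empty sum because $\tau_{P_i} + \Delta_z(P_i) \geq T$, while an undelayed path contributes exactly $T$; since $\Gamma$ delays hit at most $\Gamma$ of the $\Gamma+1$ paths, at least one $P_j$ remains undelayed, giving $\foptdpm \geq T$. Combining these two bounds yields $\foptdpm \geq T > 0 = \foptdam$, so $\foptdpm > \alpha \foptdam$ holds trivially for every $\alpha \in \R$. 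The statement for \DGM ~then follows from $\foptdgm \geq \foptdpm$, which holds because every feasible solution to \DPM ~extends to \DGM ~with the same value, as noted in \Cref{sec:dynamic:models}. The argument is essentially mechanical; the only point requiring care is verifying that $\Delta\tau = T$ indeed collapses a delayed arc to an effectively failed one both in the inflow-time shift $\theta - T$ used in \DAM ~and in the path-travel-time truncation used in \DPM.
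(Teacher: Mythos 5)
Your proposal is correct and follows essentially the same route as the paper, which obtains the corollary by taking the instance from \Cref{Prop:gapSPM_SAM} with $\tau\equiv 0$ and $\Delta\tau\equiv T$ and running the static argument analogously: robust flow conservation in \DAM\ forces $x(a''_i,\cdot)\equiv 0$, so $\foptdam=0$, while the path solution survives any $\Gamma$ delays and gives $\foptdpm>0$, hence $\foptdgm\geq\foptdpm>\alpha\foptdam$ for every $\alpha$.
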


\begin{corollary}\label{prop:gapDAM_DPM}
	Let $\Gamma,T \in \N$. 
	Then, for any $\alpha = \Gamma+1-\nicefrac{1}{\beta}$ with 
	$\beta\in \N$ there are instances such that 
	${\foptdam=\alpha \foptdpm}$ and 
	${\foptdgm = \alpha \foptdpm}$. 
\end{corollary}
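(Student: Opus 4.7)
The plan is to adapt the three-node parallel-arc construction from the proof of \Cref{Prop:gapSAM_SPM} to the dynamic setting, taking advantage of the observation at the top of \Cref{sec:dynamic:bounds}: choosing $\tau \equiv 0$ and $\Delta\tau \equiv T$ makes a delayed arc effectively equivalent to a failed arc, because a flow unit entering any delayed arc at time $\theta \ge 1$ would require travel time $T$, so it would arrive at its endpoint at time $\theta + T > T$ and thus cannot reach $t$ within the horizon. Concretely, I would fix $\beta \in \N$, set $\eta := \beta(\Gamma^2+\Gamma)$, take $V=\{s,v,t\}$ with $\Gamma+1$ parallel arcs $a'_i$ from $s$ to $v$ of capacity $\eta$ and $\eta$ parallel arcs $a''_j$ from $v$ to $t$ of unit capacity, all with $\tau\equiv 0$, $\Delta\tau\equiv T$.

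Next I would compute the three optimal values on this instance, essentially by running the static argument of \Cref{Prop:gapSAM_SPM} at each fixed time step $\theta \in \T$ and summing. For \DAM, robust flow conservation \eqref{dynamic:arc:rob-constraint} at $v$ forces, at each $\theta$, the outflow of $v$ to be distributed evenly over the $\eta$ unit-capacity arcs to $t$; since $\Gamma$ of these can be delayed simultaneously, the surviving per-step throughput is $\eta-\Gamma$, yielding $\foptdam = (\eta-\Gamma)T$. For \DPM, the symmetric optimal temporally repeated flow saturates each $v$-$t$-arc and distributes its preimage uniformly over the $\Gamma+1$ parallel $s$-$v$-arcs, so that each $s$-$v$-arc carries $\eta/(\Gamma+1)$ units at each time step and delaying $\Gamma$ of them removes $\Gamma\eta/(\Gamma+1)$ units per step; this gives $\foptdpm = \nicefrac{\eta}{\Gamma+1}\cdot T$. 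Since the \DAM-optimal solution is feasible for \DGM\ and no solution of \DGM\ can exceed $(\eta-\Gamma)T$ on this instance (the total $v$-$t$ capacity is $\eta$ and at least $\Gamma$ unit arcs are deleted in the worst case), we also get $\foptdgm = (\eta-\Gamma)T$.

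Combining these values yields
\[
  \frac{\foptdam}{\foptdpm} = \frac{\foptdgm}{\foptdpm}
  = \frac{(\Gamma+1)(\eta-\Gamma)}{\eta}
  = \Gamma+1 - \frac{\Gamma^2+\Gamma}{\eta}
  = \Gamma+1-\frac{1}{\beta}=\alpha,
\]
which proves the claim. The only step that requires genuine attention, rather than a mechanical transfer from the static proof, is the verification that the static optimum really lifts to a time-repeated dynamic optimum and that no non-repeated flow does better. This is where the choices $\tau\equiv 0$ and $\Delta\tau \equiv T$ pay off: travel times are zero so there is no cross-time coupling through arc delays, capacities decouple the time steps, and the delay is large enough that the worst-case scenario at every time step is the same as in the static instance. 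I therefore expect no real obstacle beyond spelling out this separability.
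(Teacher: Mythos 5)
Your proposal is correct and is essentially the paper's own argument: the paper also extends the instance of \Cref{Prop:gapSAM_SPM} to the dynamic setting by setting $\tau\equiv 0$ and $\Delta\tau\equiv T$, so that a delay acts as an arc failure, and then notes that the static computations carry over analogously (scaled by $T$), exactly as you do with the temporally repeated optimal solutions and the time-summed averaging bounds.
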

Hence, as in the static case, neither of the two models, \DAM ~or \DPM, can be considered preferably 
in general in terms of the robust optimal value. 


\subsection*{Price of robustness}\label{sec:dynamic:por}
\bookmarksetup{depth=part}

Next, we compare the nominal optimal flow value $\foptnom$ to the nominal value of 
the robust optimal flow solutions for the different dynamic models, i.e. the PoR. 
First, since on any instance it holds that $\foptnom\geq \foptdamnd$, 
from \Cref{prop:gapDPM_DAM} it follows:

\begin{corollary}\label{prop:gapNom_DAM}
	For any $\Gamma,T\in\N$ and $\alpha \in \R$, there are instances such that 
	$\foptnom >\alpha f(\xdam)$ for all robust feasible solutions $\xdam$ to \DAM.
\end{corollary}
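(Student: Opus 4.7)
The plan is to exhibit a single family of instances on which \emph{every} robust feasible solution to \DAM ~has nominal value zero, while $\foptnom$ is strictly positive. Once this is shown, the inequality $\foptnom > \alpha \fnom(\xdam)$ trivially holds for every $\alpha \in \R$ and every such $\xdam$, since the right-hand side equals $0$.

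Concretely, I would reuse the graph from the proof of \Cref{Prop:gapSPM_SAM} (cf. \Cref{fig:gapSPM_SAM}) and embed it in the dynamic setting exactly as done for \Cref{prop:gapDPM_DAM}: take the nodes $s, t, v_1, \dots, v_{\Gamma+1}$, parallel $s$-$v_i$-$t$ paths through unit-capacity arcs $a'_i$ and $a''_i$, and set $\tau \equiv 0$, $\Delta\tau \equiv T$. The key structural property is that each intermediate node $v_i$ has a unique incoming arc $a'_i$, so the single-delay scenario $z$ with $z(a'_i)=1$ lies in $\Lambda$ for any $\Gamma\geq 1$.

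The crucial computation is to apply robust flow conservation \eqref{dynamic:arc:rob-constraint} at $v_i$ under this scenario: for each $\theta \in \T$ it reads
\begin{equation*}
x\bigl(a'_i,\theta-\tau_{a'_i}-\Delta\tau_{a'_i}\bigr)\;=\;x(a'_i,\theta-T)\;\geq\;x(a''_i,\theta).
\end{equation*}
Since $\theta-T \leq 0$ and the convention sets $x(a,\theta)=0$ for $\theta \notin \T$, this forces $x(a''_i,\theta)=0$ for every $\theta \in \T$ and every $i \in [\Gamma+1]$. Hence no flow can reach $t$ under any robust feasible $\xdam$, giving $\fnom(\xdam)=0$. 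Meanwhile, the nominal dynamic problem admits the solution that saturates each $s$-$v_i$-$t$ path at every time step, so $\foptnom \geq (\Gamma+1)T > 0$. Combining these two facts closes the argument.

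There is no real obstacle here; the only subtlety is correctly invoking the convention $x(a,\theta)=0$ for $\theta \in \Z\setminus\T$ inside the robust flow conservation constraint, which is pure bookkeeping. The statement could alternatively be phrased as a direct consequence of \Cref{prop:gapDPM_DAM} together with the trivial bound $\fnom(\xdam) \leq \foptnom$, but the explicit instance above yields the stronger conclusion that $\fnom(\xdam)$ is identically zero on the entire feasible set, which is exactly what the corollary requires.
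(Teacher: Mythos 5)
Your proposal is correct and follows essentially the same route as the paper: the paper obtains \Cref{prop:gapNom_DAM} from the instances of \Cref{prop:gapDPM_DAM}, i.e.\ the dynamic extension (with $\tau\equiv 0$, $\Delta\tau\equiv T$) of the instance in \Cref{fig:gapSPM_SAM}, on which robust flow conservation forces every feasible solution to \DAM ~to have nominal value zero while $\foptnom>0$. Your explicit verification of the constraint at the nodes $v_i$ is exactly the detail the paper leaves implicit, so there is nothing to fix.
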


Second, we establish a lower bound on the PoR. 
This bound is stronger than the lower bound in the static setting, cf. \Cref{Prop:gapNom_SPMNom}. 
\begin{prop}\label{prop:gapNom_DPM}
	Let $\Gamma \in \N$. For any $\alpha \in [1,\Gamma +1)\cap \Q$ there exists a 
	$T>\Gamma+1$ such that there are instances with  
	${\foptnom= \alpha \foptdpmnd= \alpha \foptdgmnd}$,
	where $\xoptdpm$ is an optimal solution to \DPM ~with maximal nominal value 
	and $\xoptdgm$ is an optimal solution to \DGM ~with maximal nominal value.
\end{prop}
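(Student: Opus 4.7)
My plan is to exhibit, for every rational $\alpha = p/q \in [1,\Gamma+1)$, an explicit instance together with a time horizon $T>\Gamma+1$ so that the nominal optimum is $\alpha$ times the nominal value of the nominally-maximal robust optimal solution to both \DPM ~and \DGM. The structural reason a large price of robustness is achievable in the dynamic setting but not in the static one (cf. the strictly smaller bound in \Cref{Prop:gapNom_SPMNom}) is that, by choosing the delay of an arc equal to $T$, the delay behaves as a full arc failure over the whole horizon. This opens the door to constructions similar in spirit to the partition instance used in \Cref{thm:DPM:gamma1} and the parallel arc instance used in \Cref{Prop:gapSAM_SPM}.

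First, I would set up a ``gap gadget'' consisting of $\Gamma+1$ arc-disjoint $s$-$t$ paths, each of unit capacity and delay $T$, whose travel times are chosen so tightly that flow sent on each path barely arrives at $t$ within the horizon. On the gadget alone, $\foptnom=\Gamma+1$ (one unit of flow per path, sent at time $\theta=1$), while any robust optimal solution must spread the unit of flow per path equally (otherwise an adversary concentrates the deletion where most flow sits), so $\foptdpm=1$, $\fnom(\xoptdpm)=\Gamma+1$, and the price of robustness equals $\Gamma+1$. This gives the extreme case.

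Second, to tune the ratio to an arbitrary rational $\alpha=p/q<\Gamma+1$, I would attach to the gadget an ``inflation'' structure consisting of additional parallel $s$-$t$ arcs with short travel time, large enough capacity, and delay $0$ (so they are unaffected by any scenario). Flow through the inflation structure contributes identically to the nominal value and to the nominal value of the robust optimal solution. If $N$ denotes the flow that the inflation structure can push across the horizon, the ratio on the combined instance becomes
\begin{equation*}
  \frac{\foptnom}{\fnom(\xoptdpm)} \;=\; \frac{(\Gamma+1)+N}{1+N}.
\end{equation*}
By taking appropriate capacities and a sufficiently large $T$, the quantity $N$ can be tuned to any non-negative rational, and in particular $N=(\Gamma+1-\alpha)/(\alpha-1)$ (rescaled to integers) realizes ratio exactly $\alpha$. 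The special case $\alpha=1$ is trivial (take any instance and $N$ large). I would then verify the claimed values of $\foptnom$, $\foptdpm$, and $\fnom(\xoptdpm)$ by an explicit flow decomposition and cut analysis on the combined instance.

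Third, to promote the bound to \DGM, I would argue that on the constructed instance a subpath-based flow cannot do strictly better in nominal value than the corresponding path-based flow while staying robust optimal. Since \DGM ~is a relaxation, $\foptdgm\geq\foptdpm$; and \Cref{lemma:dynamic:earliestarrival} together with the fact that in the gap gadget the only way to deliver flow to $t$ in time is along the chosen $\Gamma+1$ arc-disjoint \stpaths ~forces any robust optimal flow in \DGM ~to reduce to a flow on \stpaths, so $\fnom(\xoptdgm)=\fnom(\xoptdpm)=\foptnom/\alpha$ as well.

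The main obstacle is the last step: covering every rational $\alpha \in [1,\Gamma+1)$, not just a discrete family such as $\{(\Gamma+k)/k:k\in\N\}$. Naïve gadgets only produce ratios of this restricted form, so the inflation gadget has to be parametrised carefully (both capacities and a sufficiently large $T$) so that an arbitrary rational can be hit exactly, while the worst-case analysis of $\foptdpm$ and $\foptdgm$ on the combined instance remains tight.
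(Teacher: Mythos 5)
Your construction measures the wrong quantity, and this breaks the whole argument. The proposition is about the \emph{price of robustness}, i.e. the ratio $\foptnom/\foptdpmnd$ where $\foptdpmnd=\fnom(\xoptdpm)$ is the \emph{nominal} value of a robust optimal solution, not the robust value $\foptdpm$. On your gap gadget of $\Gamma+1$ arc-disjoint unit-capacity paths with delay $T$, the robust optimal solution spreads one unit of flow on every path, and by your own computation $\fnom(\xoptdpm)=\Gamma+1=\foptnom$; hence the price of robustness on the gadget is $1$, not $\Gamma+1$ (you computed the gap $\foptnom/\foptdpm$ instead). Consequently your inflation formula should have $(\Gamma+1)+N$ in \emph{both} numerator and denominator, and the combined instance never yields any ratio larger than $1$. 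This is not a repairable detail of the same construction: when delays simply act as failures, hedging is achieved by spreading flow at no nominal cost, exactly as in the static setting, where the paper shows the PoR for the path model is bounded away from $\Gamma+1$ (cf. \Cref{Prop:gapNom_SPMNom}); so no failure-type gadget can approach the claimed bound.

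The mechanism that actually produces a PoR approaching $\Gamma+1$ is genuinely dynamic and relies on travel times, which your proposal never exploits. The paper's instance is a chain of $\Gamma$ pairs of parallel arcs between $v_i$ and $w_i$: a ``safe'' arc with travel time $\nicefrac{1}{\Gamma+1}-\eta$ and no delay, and a ``fast'' arc with travel time $0$ but delay $\nicefrac{1}{\Gamma+1}$, all with unit capacity and $T=1$ (scaled afterwards to integral data with $T>\Gamma+1$). The nominal optimum uses the fast arcs and delivers flow during the whole horizon, $\foptnom=1$, while the (unique) robust optimal solution must use the safe arcs; its path travel time $\nicefrac{\Gamma}{\Gamma+1}-\Gamma\eta$ shrinks the usable time window, so even nominally it delivers only $\nicefrac{1}{\Gamma+1}+\Gamma\eta$, and tuning the rational $\eta$ hits any $\alpha\in[1,\Gamma+1)\cap\Q$ exactly; uniqueness also settles the ``maximal nominal value'' clause and the transfer to \DGM. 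If you want to salvage your plan, you would have to rebuild the gadget so that robustness forces a strictly slower routing (not merely a redistribution over parallel failure-prone paths), which is precisely the paper's idea.
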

\begin{proof}
	For given $\Gamma\geq 1$, we initially set $T= 1$ and construct an instance 
	$I=(G,\Gamma,T)$ as depicted in \Cref{fig:gapNom_DPM}. 
	\begin{figure}[h]
	\centering
    \resizebox{1\textwidth}{!}{
    \begin{tikzpicture}
        \node[circle,draw=black,thick,minimum size=8mm](s)at(-8.5,0){$s$};
        \node[circle,draw=black,thick,minimum size=8mm](v1)at(-5.5,0){$v_1$};
    	\node[circle,draw=black,thick,minimum size=8mm](v2)at(-2.5,0){$w_1$};
    	\node (P1) at (-0.2,0)[fill, circle, inner sep = 0.5pt] {};
    	\node (P2)at (0,0)[fill, circle, inner sep = 0.5pt] {};
    	\node (P3) at (0.2,0)  [fill, circle, inner sep = 0.5pt] {};
    	\node[circle,draw=black,thick,minimum size=8mm](v3)at(2.5,0){$v_{\Gamma}$};
    	\node[circle,draw=black,thick,minimum size=8mm](v4)at(5.5,0){$w_{\Gamma}$};
    	\node[circle,draw=black,thick,minimum size=8mm](t)at(8.5,0){t};
        \draw[thick,->](s)edge node[above]{\small{$(a_1^*,0,0,1)$}}(v1);
        \draw[thick,->,bend left=25](v1)edge node[above]
        	{\small{$\left(a_1^1,\frac{1}{\Gamma+1}-\eta,0,1\right)$}}(v2);
        \draw[thick,->,bend right=25](v1)edge node[below] 
        	{\small{$\left(a_1^2,0,\frac{1}{\Gamma+1},1\right)$}}(v2);
        \draw[thick,->](v2)edge node[above]{\small{$(a_2^*,0,0,1)$}}(-0.5,0);
        \path[thick,->](0.5,0)edge node[above]
        	{\small{$(a_{\Gamma}^*,0,0,1)$}}(v3);
        \draw[thick,->, bend left=25](v3)edge node[above] 
        {\small{$\left(a_{\Gamma}^1,\frac{1}{\Gamma+1}-\eta,0,1\right)$}}(v4);
        \draw[thick,->, bend right=25] (v3) edge node[below] 
        	{\small{$\left(a_{\Gamma}^2,0,\frac{1}{\Gamma+1},1\right)$}}(v4);
        \draw[thick,->](v4)edge node[above]{\small{$(a_{\Gamma+1}^*,0,0,1)$}}(t);
    \end{tikzpicture}}
	\vspace{-0.6cm}
    \caption{Instance with $\foptnom=1$ and 
    		$\foptdpmnd=\foptdgmnd=\nicefrac{1}{(\Gamma +1)}+\Gamma\eta$ 
    		in the proof of \Cref{prop:gapNom_DPM}, 
    		i.e. $\foptnom\geq \foptdpmnd$ and  $\foptnom\geq\foptdgmnd$.}
    \label{fig:gapNom_DPM}
	\end{figure}
	We define
	\begin{equation*}
	\eta:=\frac{(\Gamma+1)-\alpha}{\Gamma\alpha(\Gamma+1)}.
	\end{equation*}

  Note that $0 < \eta \leq \nicefrac{1}{(\Gamma+1)}$.
  The graph $G$ is composed of a source $s$, a sink $t$ and the nodes 
  $v_{i}$, $w_{i}$, $i\in[\Gamma]$. 
  There are two parallel arcs, $a_i^1$ and $a_i^2$, from $v_i$ to $w_i$ for all 
  $i\in[\Gamma]$, with travel times 
  $\tau_{a_i^1}=\nicefrac{1}{(\Gamma+1)}-\eta$ and $\tau_{a_i^2}=0$,
  and delays $\Delta\tau_{a_i^1}=0$ and 
  $\Delta\tau_{a_i^2}=\nicefrac{1}{(\Gamma+1)}$.
  For all $i\in[\Gamma-1]$, $w_i$ is connected to $v_{i+1}$ 
  by an arc $a_{i+1}^*$ with $\tau_{a_{i+1}^*}=\Delta\tau_{a_{i+1}^*}=0$. 
  In addition, each $w_i$ is connected to $v_{i+1}$ 
  by an arc $a_{i+1}^*, ~i\in[\Gamma-1]$. 
  Besides, there is an arc $a_{1}^*$ from $s$ to $v_1$ and 
  an arc $a_{\Gamma+1}^*$ from $w_{\Gamma}$ to $t$. 
  All of these arcs $a_i^*,~i\in [\Gamma+1]$, have travel time and delay 
  equal to zero.
  All arcs have unit capacity.

  We define 
  $P_1\coloneqq\{a_1^*,a_1^1, \ldots, a_\Gamma^*, a_\Gamma^1, a_{\Gamma+1}^*\}$ 
  and  
  $P_2\coloneqq\{a_1^*,a_1^2, \ldots, a_\Gamma^*, a_\Gamma^2, a_{\Gamma+1}^*\}$.    
   
  As the nominal travel time of the arcs $a_i^2,~i\in [\Gamma]$ is zero, 
  the nominal optimal solution sends one unit of flow on $P_2$ for all 
  $\theta\in\T$ resulting in an optimal value $\foptnom=T=1$.
     
  Moreover, we note that only the arcs $a_i^2,~i\in [\Gamma]$, have nonzero delay. 
  Hence, in the worst case, all of these $\Gamma$ arcs are delayed, 
  resulting in a travel time of 
  $\tau_{a_i^2}+\Delta\tau_{a_i^2}=\nicefrac{1}{(\Gamma+1)} > 
  \nicefrac{1}{(\Gamma+1)}-\eta=\tau_{a_i^1}+\Delta\tau_{a_i^1}, i\in[\Gamma]$.
  Thus, the optimal solution $\xoptdpm$ to \DPM ~sends one unit of flow 
  on path $P_1$ in each time step $\theta\in\T$ yielding an objective value of
  ${\foptdpm=1-\left(\nicefrac{\Gamma}{(\Gamma+1)}-\Gamma\eta\right)=
  \nicefrac{1}{(\Gamma+1)}+\Gamma\eta}$.
  As $\xoptdpm$ only uses arcs with delay equal to zero, $\foptdpmnd=\foptdpm$. 
  This robust optimal solution can be shown to be unique, so that the described solution has 
  maximal nominal value.
  Thus, combining the above proves the first assertion as
  \begin{align*}
    \frac{\foptnom}{\foptdpmnd}
  	= \frac{1}{\frac{1}{(\Gamma+1)}+\Gamma\eta}
  	= \frac{1}{\frac{1}{(\Gamma+1)}+\Gamma 
  			\left( \frac{(\Gamma+1)-\alpha}{\Gamma\alpha(\Gamma+1)}\right)}
  	= \frac{1}{\frac{1}{(\Gamma+1)} + \frac{1}{\alpha} 
  			- \frac{1}{(\Gamma+1)}}
  	=\alpha.
  \end{align*}	
  The unique optimal solution $\xoptdpm$ to \DPM ~is also the unique optimal 
  solution to \DGM ~yielding the second assertion.
	
	We further note that the constructed instance contains arcs with 
	non-integral travel times or delays. 
	However, since $\alpha$ and therefore $\eta$ are rational, 
	we can w.l.o.g. scale the time horizon $T$ and the instance to an 
	instance with integral travel times 
	as well as integral delays giving the same bound.
\end{proof}


\bookmarksetup{depth=subsection}
\subsection{Approximation approaches}\label{sec:dyn:approx}
In the following, we present two solution concepts for \DPM ~and \DGM.
\subsection*{Temporally repeated flows}\bookmarksetup{depth=part}
As mentioned in \Cref{sec:dynamic:models}, temporally repeated flows are a 
classical solution concept for the nominal dynamic maximum flow problem. 
We recall the results from \cite{gottschalk} which show that temporally repeated 
flow solutions do not solve \DPM ~to optimality in general and give bounds on the
corresponding approximation factor. In the following, we denote the optimal value of a temporally repeated flow by $\foptdpmtr$. 

\begin{theorem}\cite[Proposition 7, Theorem 3]{gottschalk}
	Let $\Gamma \in \N$ be arbitrary. We set $T\coloneqq\Gamma+1$. 
	Then, for any $\alpha \in [1,T]$ there are 
	instances such that $\foptdpm \geq \alpha \foptdpmtr$. 
	On the other hand, for all instances the inequality 
	$\foptdpm \leq \eta k \log(T) \foptdpmtr$ holds, where $k$ and $\eta$ 
	are parameters depending on the given graph.
\end{theorem}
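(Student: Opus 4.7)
The statement has two parts: a lower bound asserting instances with gap up to $T=\Gamma+1$, and an upper bound giving a logarithmic-in-$T$ gap with graph-dependent constants. I would address them separately.

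For the lower bound, the plan is to construct a parametric family of instances indexed by $\alpha \in [1,T]$ on which temporally repeated flows are provably suboptimal by a factor of at least $\alpha$. The key intuition is that a temporally repeated flow commits to one fixed path-inflow distribution across all time steps, so the adversary can always delay the $\Gamma$ arcs carrying the largest fraction of flow and remove a correspondingly large portion of the total. By contrast, an arbitrary \DPM~solution can rotate its flow between essentially disjoint bundles of paths in different time windows, thereby spreading out the effect of any fixed $\Gamma$-arc delay set. Concretely, I would build a graph consisting of $\Gamma + 1$ groups of parallel \stpaths, with travel times tuned so that an optimal \DPM~flow uses a single group in each of $\Gamma+1$ disjoint time windows; the adversary can only kill the contribution from one such window, while a temporally repeated flow, forced to use all groups simultaneously at all times, loses the contribution from $\Gamma$ of them. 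Adjusting the number of groups and the ratio of their capacities would allow one to realize any target $\alpha \in [1,T]$.

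For the upper bound, the plan is to start from an optimal solution $\xoptdpm$ to \DPM~and extract a temporally repeated flow whose robust value is within a factor $\eta k \log(T)$ of $\foptdpm$. The natural route is a dyadic/doubling decomposition: partition $[T]$ into $O(\log T)$ subintervals of geometrically increasing length and examine the restriction of $\xoptdpm$ to each one. By a pigeonhole/averaging argument, on at least one subinterval the time-averaged inflow rates along \stpaths~capture a constant fraction of the value of $\xoptdpm$ on that subinterval; promoting this average to a temporally repeated flow on the full horizon then yields the claimed logarithmic-factor approximation. The graph-dependent parameters $\eta$ and $k$ would account for the loss incurred when rescaling the averaged flow to satisfy capacity and robust feasibility on the entire time horizon, and would naturally depend on quantities like the number of relevant simple \stpaths, the maximum travel time of a path, or the width of the graph.

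The main obstacle is in the upper bound, specifically in showing that the averaging step preserves robust feasibility against delaying any $\Gamma$ arcs without losing more than a $\log(T)$ factor. The adversary's worst delay set may shift over time for $\xoptdpm$, so the averaged candidate could in principle face a much worse adversarial response than the average of the per-subinterval losses. Overcoming this requires a careful interchange-of-max argument, or a randomized rounding over the dyadic decomposition combined with a capacity-rescaling argument that localizes the contributions of the graph parameters $\eta$ and $k$. Once this is in place, the lower bound family from the first part certifies that the logarithmic factor (modulo graph-dependent terms) cannot be avoided.
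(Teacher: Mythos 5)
First, note what you are comparing against: the paper gives no proof of this statement at all, it is imported verbatim from \cite[Proposition 7, Theorem 3]{gottschalk}, so the only question is whether your sketch would itself constitute a proof, and it would not. The lower-bound construction is flawed in its core mechanism. You take $\Gamma+1$ arc-disjoint (``parallel'') groups of \stpaths~and claim the adversary can destroy only one of the $\Gamma+1$ time windows used by the time-varying solution. But the scenario is a time-invariant set of at most $\Gamma$ delayed arcs: if a single delayed arc suffices to ruin a group, the adversary can place one delay in each of $\Gamma$ distinct groups and thereby wipe out $\Gamma$ of your $\Gamma+1$ windows, so the adaptive solution keeps roughly the same $\nicefrac{1}{(\Gamma+1)}$ fraction you ascribe to the repeated flow and no gap appears; if instead each group consists of many arc-disjoint unit paths so that $\Gamma$ delays barely hurt it, they barely hurt the temporally repeated flow as well. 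Worse, if the groups share no arcs there is no capacity conflict at all, so the temporally repeated flow running every path at its bottleneck rate at every time step is feasible and, in \emph{every} scenario simultaneously, collects the maximum amount that can arrive by $T$ on each path; on such instances $\foptdpmtr=\foptdpm$ and the gap is $1$. The known gap instances rely on precisely the ingredient your construction lacks: a shared bottleneck whose capacity the time-varying solution reuses for different continuations (with staggered travel times and delays) in different time windows, which a constant-rate flow cannot imitate. The claim that adjusting group sizes realizes every $\alpha\in[1,T]$ is likewise asserted rather than argued.

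For the upper bound, your dyadic averaging scheme is a programme, not a proof: the step you yourself flag as the obstacle --- that the time-averaged candidate remains robust feasible and that the adversary's best response to it is controlled by the per-interval losses of $\xoptdpm$, since the worst delay set may differ across intervals and may differ again for the averaged flow --- is the entire content of the $\eta k \log(T)$ bound, and nothing in the sketch indicates how the graph-dependent parameters $k$ and $\eta$ would emerge from it. As it stands neither half of the statement is established; in the context of this paper the correct treatment is the citation of \cite{gottschalk}, or a proof that actually follows that reference's argument.
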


It is clear that the lower bound in this proposition translates to \DGM. 
In detail, we observe that the instances used to obtain \Cref{prop:gapDPM_DAM} and \Cref{prop:gapDAM_DPM}, both have optimal solutions to \DPM ~that are temporally repeated. This leads to the following result.  
\begin{corollary}\label{Cor:gapDGM_DPMrep-DAM_DPMrep-DPMrep_DAM}
	Let $\Gamma\in\N$.
	For any $\alpha \in[1,\Gamma+1)$ there are instances such that 
	${\foptdgm \geq \alpha \foptdpmtr}$ and ${\foptdam \geq \alpha \foptdpmtr}$.
	Moreover, for any $\alpha \in \R$ there are instances such that 
	${\foptdpmtr \geq \alpha \foptdam}$.
\end{corollary}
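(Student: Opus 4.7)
The plan is to transfer the instances constructed in the proofs of Corollaries \ref{prop:gapDPM_DAM} and \ref{prop:gapDAM_DPM} to this setting, using the fact that on both instances the optimal \DPM{} solutions can be chosen to be temporally repeated. Recall that these instances are obtained from the static ones in \Cref{Prop:gapSPM_SAM} and \Cref{Prop:gapSAM_SPM} by setting $\tau\equiv 0$ and $\Delta\tau\equiv T$ for some $T\in\N$. This choice decouples the problem across time steps, so any static flow solution extends to a temporally repeated dynamic one with robust value exactly $T$ times the static robust value, and vice versa.

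For the first two inequalities, I would invoke \Cref{prop:gapDAM_DPM}, which on its constructed instance yields $\foptdam=\alpha'\foptdpm$ and $\foptdgm=\alpha'\foptdpm$ for any $\alpha'=\Gamma+1-\nicefrac{1}{\beta}$ with $\beta\in\N$. Since $\foptdpmtr\leq\foptdpm$ always holds, we immediately get
\begin{equation*}
\foptdam \;=\; \alpha'\foptdpm \;\geq\; \alpha'\foptdpmtr,
\end{equation*}
and analogously for $\foptdgm$. Given any $\alpha\in[1,\Gamma+1)$, choosing $\beta$ large enough so that $\alpha'\geq\alpha$ yields the desired bounds. (Thus for this direction we do not even need that the optimal \DPM{}-solution is temporally repeated, only the trivial inequality $\foptdpmtr\leq\foptdpm$.)

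For the third inequality, I would use the instance from \Cref{prop:gapDPM_DAM} (the $\Gamma+1$ disjoint parallel $s$-$t$-paths with $\tau\equiv 0$ and $\Delta\tau\equiv T$). By the same argument as in the static case, robust flow conservation \eqref{dynamic:arc:rob-constraint} forces $\foptdam=0$. On the other hand, sending one unit of flow on each of the $\Gamma+1$ paths at every time step $\theta\in[T]$ is a feasible temporally repeated solution with robust value $T$, since at most $\Gamma$ arcs can be delayed, killing flow on at most $\Gamma$ of the arc-disjoint paths. Hence $\foptdpmtr\geq T>0=\foptdam$, so $\foptdpmtr\geq\alpha\foptdam$ holds for every $\alpha\in\R$.

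The only substantive step is verifying that the natural temporally repeated candidate on each instance is indeed a feasible solution realising the promised robust value; this is essentially the static analysis of \Cref{Prop:gapSPM_SAM} and \Cref{Prop:gapSAM_SPM} applied time step by time step, and poses no difficulty beyond careful bookkeeping of delayed arcs and capacities. The main conceptual point—namely that $\foptdpmtr$ coincides with $\foptdpm$ on these specific instances—reduces to the observation that the problem is time-separable under $\tau\equiv 0$ and $\Delta\tau\equiv T$, so no non-temporally-repeated strategy can outperform the repeated one.
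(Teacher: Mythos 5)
Your proposal is correct and follows essentially the paper's route: the paper derives the corollary from the same two instances (the dynamic versions of \Cref{Prop:gapSPM_SAM} and \Cref{Prop:gapSAM_SPM}) together with the observation that their optimal \DPM-solutions are temporally repeated. Your only deviation is the harmless shortcut of using the trivial bound $\foptdpmtr\leq\foptdpm$ for the first two inequalities instead of the equality $\foptdpmtr=\foptdpm$, and your explicit temporally repeated construction with value $T$ versus $\foptdam=0$ for the last inequality matches the intended argument.
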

Hence, it does not hold in general that ${\foptdam > \foptdpmtr}$ or ${\foptdpmtr > \foptdam}$.


\subsection*{Temporally increasing flows}
The previous paragraph shows that the simple concept of temporally repeated 
flows satisfies approximation guarantees for \DPM. 
However, we are interested in the question whether there is a similar approach 
for \DGM ~also fulfilling approximation guarantees or even being optimal.
We first state a natural extension of temporally repeated flows, which we call 
temporally increasing flows. 
In detail, they fulfill the condition
\begin{align*}
  x(P,\theta)\leq x(P,\theta+1)\quad 
    \forall P \in \barP,\theta \in [T-1].
\end{align*}
We denote an optimal temporally increasing flow 
by~$\xoptti$ and the corresponding objective value by~$\foptti$.
Clearly, all temporally repeated flows are also temporally increasing flows.

The intuitive idea for temporally increasing flows is
that we choose a (sub)path decomposition and send as much flow as possible on 
the chosen paths satisfying the capacity and robustness constraints.
However, an optimal temporally increasing flow is not 
necessarily an optimal solution to \DGM, 
i.e. we sometimes have to reroute flow within the time horizon to solve \DGM ~to optimality.
The following example demonstrates such a case.

\begin{example}\label{ex:gapDGM_DGMti}
  	Let $\Gamma=1$ and $T=2$. We construct a graph $G$ consisting of 
  	three nodes $\{s,v,t\}$ on which we have 
  	$\foptdgm=\foptdpm=\foptdam=2$ and $\foptti=1.5$, 
	cf. \Cref{fig:gapDGM_DGMti}.
	In $G$, there is one arc $a_1$ from $s$ to $v$ with 
	$\tau_{a_1}=\Delta\tau_{a_1}=0$. 
	$v$ is connected to $t$ by one arc $a_2$ with 
	$\tau_{a_2}=0,~\Delta\tau_{a_2}=2$ 
	and by one arc $a_3$ with $\tau_{a_3}=1,~\Delta\tau_{a_3}=0$. 
	Besides, there is one arc $a_4$ from $s$ to $t$ with 
	$\tau_{a_4}=\Delta\tau_{a_4}=1$. All arcs have unit capacity.
	\begin{figure}[h]
	\centering
    \begin{tikzpicture}[scale=.8]
        \node(s)at(-6.5,0) [circle,draw=black, thick, minimum size=8mm] {$s$};
    	\node(v)at(-3,0) [circle,draw=black, thick, minimum size=8mm] {$v$};
    	\node(t)at(1,0) [circle,draw=black, thick, minimum size=8mm] {$t$};
        \draw[thick,->] (s) edge node[above] {$(a_1,0,0,1)$} (v);
        \draw[thick,->,bend left=30](v)edge node[above]{$(a_2,0,2,1)$}(t);
        \draw[thick,->,bend right=0](v)edge node[below]{$(a_3,1,0,1)$}(t);
        \draw[thick,->,bend right=35](s) edge node[above]{$(a_4,1,1,1)$}(t);
    \end{tikzpicture}
    \caption{Instance with $\foptdgm\hs{-0.2em}=\hs{-0.1em}\foptdam\hs{-0.2em}=
    		\hs{-0.1em}\foptdpm\hs{-0.2em}=\hs{-0.1em}2$ and 
    		$\foptti\hs{-0.1em}=\hs{-0.1em}1.5$ in \Cref{ex:gapDGM_DGMti}.}
    \label{fig:gapDGM_DGMti}
	\end{figure}

	Obviously, $\xoptdgm(a_1,\theta)=1$ 
    and $\xoptdgm(a_4,\theta)=1$ for all $\theta\in\T$.
    For $\theta=1$, the flow unit arriving at $v$ is sent 
    via $a_3$ to $t$ and arrives in any case within the time 
    horizon $T=2$ as $\Delta\tau_{a_3}=0$. 
    For $\theta=2$, the flow unit arriving at $v$ is sent to $t$ via $a_2$.
    In the worst case, $a_2$ or $a_4$ is delayed so that no flow arrives at 
    the sink via that arc. 
    Regardless of which of these two arcs is delayed, one unit of flow arrives 
    at $t$ within the time horizon $T$ via the remaining arc, which results in 
    an objective value of $2$.
       
    As in the general solution, $\xoptti(a_1,\theta)=\xoptti(a_4,\theta)=1$ 
    for all $\theta\in\T$.
    In each time step, the optimal temporally increasing flow solution sends 
    $0.5$ flow units via each of the arcs $a_2$ and $a_3$.
    In any case, $0.5$ flow units arrive at the sink via $a_3$ as 
    $\Delta\tau_{a_3}=0$. 
    Regardless of which of the other arcs, $a_2$ and $a_4$, is delayed, 
    one unit of flow arrives at $t$ within the time horizon via the remaining, 
    non-delayed arc. 
    This results in an optimal value of $\foptti=1.5$.

	We note that the optimal solution $\xoptdgm$ to \DGM ~is also feasible 
	to \DAM, resulting in the same objective value. 
	Combining $a_1$ and the $v$-$t$-arcs to \stpaths ~while sending 
	the same amount of flow as in $\xoptdgm$, the solution is feasible for 
	\DPM ~and yields the same objective value. 	
	
	We remark that the example can be extended to arbitrary $\Gamma\geq 1$ 
	and $T\geq 2$. 
\end{example}

In contrast to the previous example, there are also instances on which 
the best temporally increasing flow has a greater objective value than 
the optimal solutions to \DAM ~and \DPM, cf. the instances in \Cref{sec:dynamic:bounds}.
It remains an open question how to compute an optimal temporally increasing flow. 
In contrast to temporally repeated flows, for which a linear programming 
formulation exists if the paths in the graph are T-bounded, there is no 
trivial optimization model for temporally increasing flows. 
Also, it is unclear whether there is an upper bound on the approximation ratio,
a question which required a quite tedious proof for temporally repeated 
flows for \DPM, cf. \cite{gottschalk}.  
Finding an efficient algorithm for temporally increasing flows or other 
approximation approaches as well as proving approximation guarantees
could be an important focus for future research.

\bookmarksetup{depth=subsection}
\section{Conclusion and outlook}\label{sec:conclusion}
In this paper, we have introduced new models for the robust maximum flow problem 
as well as for the robust maximum flow over time problem. 
The main advantage of the proposed general models, \SGM ~and \DGM, is
to unify the known robust flow models in order to obtain less
conservative solutions. 
We provided a thorough analysis of the complexity of these new models 
and investigated the solution quality in comparison to 
existing models, showing e.g. that the new general models yield better 
robust optimal values than the known models. 
In particular, we also showed that \SGM ~remains solvable in polynomial time in 
the special cases for which \SPM ~is solvable in polynomial time, 
for example if only one arc may fail.
These results highlight the advantages of the new general models compared to 
the previously known models.

We have pointed out several open questions, and highlight some of them here.
We know that \SPM ~is NP-hard if the number of failing arcs $\Gamma$ is part 
of the input.
However, it is unknown whether the problem is already NP-hard for ${\Gamma=2}$, 
cf. \cite{disser}. 
Thus, the complexity for fixed $\Gamma$ is still an open question, for \SPM ~as well as 
for \SGM. 
Furthermore, the complexity of the integral versions of the arc models is unclear whereas we clarified the complexity of the other integral models.

We have established several lower bounds on possible gaps and the price of
robustness. 
It stands out that most of these lower bounds are ${\Gamma+1}$.
An open question is whether there exist lower bounds in the
dynamic setting that depend on the time horizon~$T$.
Finally, we have proven an upper bound of two on the gap between 
\SGM ~and \SPM, or \SAM ~and \SPM ~respectively, 
for the case of one failing arc, i.e. ${\Gamma=1}$.
Since the general models, \SGM ~and \DGM, are computationally hard to solve
in general, a further investigation of those gaps and a generalization
of our result to cases with ${\Gamma>1}$ could lead to approximation 
guarantees for the general models.

\section*{Acknowledgments}
\label{sec:acknowledgements}
The authors thank the DFG for their support within Projects B06 and Z01 in CRC TRR 154.
This research has partially been performed as part of the Energie Campus
Nürnberg (EnCN) and is supported by funding of the Bavarian State
Government.
\subsection*{Declarations}
The authors have no competing interests to declare. 

\section*{References}
\printbibliography[heading=none]

\vspace{1cm}
\appendix
\bookmarksetup{depth=subsection}
\section{Proofs of \texorpdfstring{\Cref{sec:static}}{Section \ref{sec:static}}}

\subsection{Proof of \texorpdfstring{\Cref{lemma:static:gamma1-maxflow}}{Theorem \ref{lemma:static:gamma1-maxflow}}}\label{prooflemma:static:gamma1-maxflow}
\begin{proof}
  Let $x$ be an optimal solution to \SGM ~with maximal nominal flow value 
  $\fnom(x)$ and assume for contradiction that $x$ is not nominal optimal, i.e. 
  $\fnom(x)<\foptnom$. 
  We split the flow into $x=g+h$, where $g$ denotes the flow on paths ending 
  at $t$, i.e. $g(P)=0$ for all $P\notin\dinPt$ 
  and $h$ denotes the flow on all other subpaths, i.e. $h(P)=0$ for all 
  $P\in\dinPt$.
  In order to illustrate the subsequent steps of the proof, we provide a sketch in 
  \Cref{fig:thm_nominal}. 
  
  Since, by assumption, $x$ is not nominal  optimal, there exists an 
  augmenting acyclic $s$-$t$-path $P$
  in the residual graph of $g$, which we denote by $G_g$. 
  If this augmenting path existed in $G_x$, it would be possible to increase the 
  nominal flow value of $x$ without reducing the robust flow value. 
  This would contradict the assumption that $x$ maximizes $\fnom(x)$. 
  
  Therefore, there is at least one forward-arc of $P$ in $G_g$ that does not  
  exist in $G_x$. 
  Due to $x=g+h$, this arc thus carries flow that is part of $h$. 
  Let $\bar{a}$ be the last arc on $P$ such that there exists a path $P'\in\barP$ 
  with $h(P')>0$ and $\bar{a}\in P'$. Let $P'$ end at node $w$, 
  i.e. $P'\in\dinP(w)$.
  Furthermore, let $P_{\leq \bar{a}}$ and $P_{>\bar{a}}$ denote the subpaths of 
  $P$ that end in the end node of $\bar{a}$ and 
  start at the end node of $\bar{a}$, respectively, i.e. $P=P_{\leq \bar{a}}\cup P_{>\bar{a}}$ and 
  $\bar{a}\in P_{\leq \bar{a}}$, $\bar{a}\notin P_{>\bar{a}}$.
  Analogously, we use this notation for $P'$.
  \begin{figure}[h]
  	\begin{tikzpicture}[scale=.7]
  	\node(s) at (-6,3.5) [circle,draw=black,thick,minimum size=6mm] {$s$};
  	\node(va) at (-2.5,3.5)
  		[circle,inner sep=0pt,fill=black,thick,minimum size=2mm] {};
  	\node(wa) at (-0.5,3.5) 
  		[circle,inner sep = 0pt,fill = black,thick,minimum size=2mm] {};
  	\node(v) at (-4.75,1.2) 
  		[circle,inner sep = 0pt,fill = black,thick,minimum size=2mm] {};
  	\node(w) at (2,1.5) [circle,draw=black, thick, minimum size=6mm] {$w$};
  	\node(t) at (5,3) [circle,draw=black, thick, minimum size=6mm] {$t$};
  	\node(v1) at (-1,0.5) 	
  		[circle,inner sep = 0pt,fill = black,thick,minimum size=2mm] {};
  	\node(w2) at (4.5,2){};
   	\node(w3) at (4.5,-0.25){};
  	\draw[line width=3pt,black!25,decorate, decoration={snake,amplitude=2mm,
  		segment length=15mm,post length=3mm}](v)--(va);
  	\draw[line width=3pt,black!25](va)edge node[below]{}(wa);
	\draw[line width=3pt, black!25, decorate, decoration={snake,amplitude=1mm,
		segment length=18mm,post length=3mm}](wa)--(t);  	
  	\draw[thick,->,bend left =0](va)edge node[below]{$\bar{a}$}(wa);
  	\draw[thick, dashed,decorate, decoration={snake,amplitude=2mm,
  		segment length=15mm,post length=3mm},->](v)--(va);
	\draw[thick, dashed,decorate, decoration={snake,amplitude=2mm,
		segment length=15mm,post length=3mm},->](wa)--(w);
	\draw[thick, decorate, decoration={snake,amplitude=1mm,
		segment length=18mm,post length=3mm},->](s)--(va);
	\draw[thick, decorate, decoration={snake,amplitude=1mm,
		segment length=18mm,post length=3mm},->](wa)--(t);
	\draw[thick, decorate, decoration={snake,amplitude=1mm,	
		segment length=15mm,post length=3mm},->](v1)--(w);
	\draw[thick, decorate, decoration={snake,amplitude=1mm,
		segment length=18mm,post length=3mm},->](w)--(w2);
  	\node[above, black!30] at (3,3.3){$P''$};
  	\node[above, ] at (0.25,1.9){$P'$};
  	\node[above, ] at (0.95,0.35){$P_1$};
  	\node[above, ] at (3.45,1){$P_2$};
   	\node[above, ] at (-3,3.75){$P$};
   	\node[above, ] at (-3.5,1.5){$P'$};
   	\node[above, black!30] at (-3.95,2.1){$P''$};
  	\end{tikzpicture}
  	\vspace{-0.5cm} 
  	\caption{Sketch for the proof of \Cref{lemma:static:gamma1-maxflow}. }
  	\label{fig:thm_nominal}
  \end{figure}
  
  We will now construct a new solution $x'=g'+h'$ that increases the 
  nominal value without reducing the robust value compared to $x$, 
  which then contradicts our assumption. 
  The main idea of the following construction is to reduce the flow on $P'$ 
  by some positive amount $\epsilon$ while increasing flow along $P_{>\bar{a}}$. 
  
  First, we increase flow along $P_{>\bar{a}}$: the set of arcs that fail in the 
  worst cases, which can be written as
  $\{a\in A\defsep a\in \argmax \sum_{P\in\dinPt:a\in P}g(P)\}$, 
  is independent of the specific path decomposition of $g$, as $\Gamma=1$.
  Therefore, we can consider the underlying arc flow of $g$ and execute an 
  augmentation step on this flow. 
  In particular, we augment the underlying arc flow of $g$ by a small value $\epsilon>0$ 
  on $P''=P'_{\leq \bar{a}}\cup P_{>\bar{a}}$. 
  This increase is possible since $h(P')>0$, implying that the arcs of 
  $P'_{\leq\bar{a}}$ are in $G_g$, and since $P$ was assumed to be in $G_g$.
  As argued above, we can now choose an arbitrary $\dinPt$-path decomposition 
  of this new flow and obtain an interim $\tilde{g}$. 
  The above modification, namely to replace $g$ by $\tilde{g}$, strictly increases 
  the nominal flow value by $\varepsilon$ without decreasing the robust flow value.
  
  Second, we reduce the flow on $P'$: we reduce the flow $h(P')$ 
  by $\varepsilon$, i.e. we set $\tilde{h}(P')= h(P)-\varepsilon$. 
  We distinguish the two cases that the current new flow, $\tilde{x}=\tilde{g}+\tilde{h}$, 
  is feasible and that it is not: 
  i) If $\tilde{x}=\tilde{g}+\tilde{h}$ is feasible for \SGM, we set $x' = \tilde{x}$ and 
  the construction is finished.
  
  ii) If $\tilde{x}=\tilde{g}+\tilde{h}$ is not feasible for \SGM, we construct a new 
  feasible solution $x'$ as follows.  
  The reduction of $h(P')$ to $\tilde{h}(P')$ can only lead to infeasibility in 
  terms of robust flow conservation \eqref{static:general-rob-constraint} 
  at the node $w$.
  Let $P_2$ be a path starting at $w$ with $x(P_2)>0$.
  Such a path exists due to robust infeasibility of $\tilde{x}$ at $w$. 
  Since $x$ is robust feasible, there is a path $P_1$, which is arc-disjoint 
  to $P'$ and ends at $w$ with $\tilde{h}(P_1) = h(P_1)>0$. 
  In order to ensure robust feasibility at $w$ and all other nodes, 
  we use the concatenation of the paths $P_1$ and $P_2$ and send a small amount 
  of flow on this concatenation instead of the individual paths.  
  In detail, 
  \begin{equation}
  x'(P_1)=\tilde{h}(P_1)-\varepsilon, \quad
  x'(P_2)=\tilde{x}(P_2)-\epsilon, \quad x'(P_1\cup P_2)=\tilde{x}(P_1\cup P_2)+\epsilon.
  \end{equation}
  The resulting flow $x'$ is decomposed, as before, into flow on $\dinPt$-paths, 
  called $g'$, and the remaining flow, called $h'$. 
  If $P_2\in\dinPt$, we thus have $g'(P_1\cup P_2)=x'(P_1\cup P_2)$ 
  and hence $g'$ compared to $g$ has been increased by $\varepsilon$ on all 
  arcs of $P_1$.
  In the first construction step above, $g$ has been increased by $\epsilon$ on 
  $P''$ in order to obtain $\tilde{g}$. 
  By construction, $P_1$ is arc-disjoint to $P'$ and thus to $P''_{\leq\bar{a}}$.
  Further, since $P''_{>\bar{a}}=P_{>\bar{a}}$ does not contain any arc $a$ with 
  $h(a)>0$, $P_1$ is arc-disjoint to $P''$. 
  Thus, $g'$ has not been increased by more than $\epsilon$ on any arc 
  compared to $g$. 
  
  In both cases, i) and ii), we have constructed a new feasible solution $x'$ with
  \begin{align*}
    \sum_{\substack{P\in\dinPt:\\a\in P}}g'(P)
      -\sum_{\substack{P\in\dinPt:\\a\in P}}g(P)\leq\epsilon
      \qquad\forall a\in A,
  \end{align*}
  
  and $\fnom(x')=\fnom(x)+\varepsilon$. 
  Thus,
  \begin{align*}
    f_R(x')=\fnom(x')-\max_{a\in A}\sum_{\substack{P\in\dinPt:\\a\in P}}g'(P)
    &=\fnom(x)+\epsilon-\max_{a\in A}\sum_{\substack{P\in\dinPt:\\a\in P}}g'(P)\\
    &\geq \fnom(x)+\varepsilon
      -(\varepsilon+\max_{a\in A}\sum_{\substack{P\in\dinPt:\\a\in P}}g(P))
    ~=f_R(x).
  \end{align*}  
  Hence, $x'$ has a strictly larger nominal flow value than $x$ and at least 
  the same robust flow value as $x$, contradicting the initial assumption on $x$.
\end{proof}

\subsection{Proof of \texorpdfstring{\Cref{lem:SGM_capacity1infty_NP}}{Proposition \ref{lem:SGM_capacity1infty_NP}}}\label{prooflem:SGM_capacity1infty_NP}
\begin{proof}
	We show the statement by a reduction from the 
	static robust flow problem with arbitrary capacities.
	Thereby, we construct from an instance with arbitrary capacities an 
	instance with capacities in $\{1,\infty\}$. 
	This construction has been proposed in \cite[Lemma 5, proof]{disser} 
	for the path model and it remains to be proven that equivalence also 
	holds for the general model. 
	Let $\Gamma\geq 1$ and $G=(V,A,s,t,u)$ be an arbitrary instance for \SGM. 
	We construct a graph $G'=(V',A',s,t,u')$ from $G$ by adding 
	a node $v_a$ for every $a=(v,w)\in A$ and replacing $a$ by 
	an arc $a'=(v,v_a)$ with infinite capacity 
	and $u_a$ parallel arcs $a_i''=(v_a,w), i \in [u_a]$ 
	with unit capacity (cf. \Cref{fig:GraphTransformation_u=(1_infty)}). 
	The arc set $A'$ thus consists of one arc $a'$ and $u_a$ arcs $a''_i$ 
	for each arc $a\in A$. 
	We denote the set of scenarios and the set of all subpaths on 
	$G'$ by $\Sc'$ and $\barP'$, respectively.
	Given a \vwpath ~$P$ in $G$ with $v,w\in V$, 
	we denote the set of all corresponding \vwpaths ~in $G'$ by $\barP'(P)$,
	meaning that every path $P'\in \barP'(P)$ uses for 
	every $a\in P$ the corresponding arc $a'$ and one of the arcs $a''_i$, $i\in[u_a]$.
	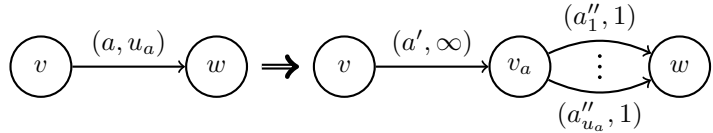
\begin{figure}[h]
		\centering
		\begin{tikzpicture}[scale=.7]
		\node(v) at(-4.5,0)[circle,draw=black,thick,minimum size=8mm] {$v$};
		\node(w) at(-1.2,0)[circle,draw=black,thick,minimum size=8mm] {$w$};
		\node(vl) at(1.2,0)[circle,draw=black,thick,minimum size=8mm] {$v$};
		\node(va) at(4.5,0)[circle,draw=black,thick,minimum size=8mm] {$v_a$};
		\node(wl) at(7.5,0)[circle,draw=black, thick, minimum size=8mm] {$w$};
		\draw[thick,double,distance=7pt,->] (-0.35,0) -- (0.35,0);
		\draw[thick,->](v)edge node[above]{$(a,u_a)$}(w);
		\draw[thick,->](vl)edge node[above]{$(a',\infty)$}(va);
		\draw[thick,->,bend left=25](va)edge node[above]{$(a''_1,1)$}(wl);
		\draw[thick,->,bend right=25](va)edge node[below]
		{$(a''_{u_a},1)$}(wl);
		\node(P1) at (6,0.2)[fill, circle, inner sep = 0.5pt] {};
		\node(P2) at (6,0)[fill,circle,inner sep=0.5pt]{};
		\node(P3) at (6,-0.2)[fill, circle, inner sep = 0.5pt] {};
		\end{tikzpicture}
		\caption{Construction in the proof of \Cref{lem:SGM_capacity1infty_NP}.
			This construction has originally been proposed in \cite[Lemma 5, proof]{disser}.}
		\label{fig:GraphTransformation_u=(1_infty)}
	\end{figure}	
	
	Let $\xgm'$ be an optimal solution to \SGM ~on $G'$.
	Additionally, we define a vector $\xoptgm\in\R^{|\barP|}_{\geq 0}$ by
	\begin{align*}
    \xoptgm(P)\coloneqq \sum_{P'\in \barP'(P)}\xgm'(P') 
    	\qquad \forall P\in\barP.
	\end{align*}
	
	In the following, we show that $\xoptgm$ is an optimal solution to 
	\SGM ~on $G$. 
	First, from \Cref{cor:static:aux-lemma} it follows that $\xgm'(P')=0$ 
	for all paths starting at the intermediate nodes $v_a$, 
	i.e. for all $P'\in\delta_{\barP'}^{+}(v_a)$ with $a\in A$. 
	Additionally, we observe that for any $v\in V\setminus \{s\}$ there 
	is at least one scenario 
	\begin{align*}
	  S^*\in\argmax_{S'\in\Sc'}
	    \sum_{\substack{P'\in \delta_{\barP'}^{-}(v):\\P'\cap S'\neq \emptyset}}
	    \xgm'(P')
	\end{align*}
	such that $S^*$ only contains arcs of the type $a'$ and not arcs of the type 
	$a''_i$. 
	Hence, we can write the loss that is caused by the realization of the worst 
	case as follows: 
	\begin{align*}
	  \max_{S'\in\Sc'}
	    \sum_{\substack{P'\in\delta_{\barP'}^{-}(v):\\P'\cap S'\neq\emptyset}}
	    \xgm'(P')
	  =\max_{S\in\Sc}
	    \sum_{\substack{P\in \dinP(v):\\P\cap S\neq \emptyset}}
	    \sum_{P'\in \barP'(P)} \xgm'(P').
	\end{align*}
	Combining the above, it follows that for any $v\in V\setminus \{s\}$ we have 
	\begin{align*}
	  &\sum_{P'\in \delta_{\barP'}^{-}(v)} \xgm'(P') 
	    -\max_{S'\in\Sc'}
	      \sum_{\substack{P'\in \delta_{\barP'}^{-}(v):\\P'\cap S'\neq \emptyset}} 
	      \xgm'(P')\\
	  = &\sum_{P\in \dinP(v)}\sum_{P'\in \barP'(P)} \xgm'(P')
	    - \max_{S\in\Sc}
	      \sum_{\substack{P\in\dinP(v):\\P\cap S\neq \emptyset}}
	      \sum_{P'\in \barP'(P)} \xgm'(P')\\
	  = &\sum_{P\in \dinP(v)} \xoptgm(P) 
	    -\max_{S\in\Sc}
	      \sum_{\substack{P\in \dinP(v):\\P\cap S\neq \emptyset}} 
	      \xoptgm(P).
	\end{align*}
	Thus, $\xoptgm$ is a feasible solution to \SGM ~on $G$ with same optimal 
	value as $\xgm'$ on $G'$.
	Furthermore, from any feasible solution $x$ to \SGM ~on $G$ we can 
	trivially construct a feasible solution $x'$ to \SGM ~on $G'$ with at least 
	the same robust value by uniformly splitting the flow $x(P)$ into the paths 
	that correspond to $P$. 
	It follows that $\xoptgm$ is optimal for \SGM ~on $G$.
	Thus, it is at least as hard to solve \SGM ~on a graph with capacities 
	constrained to $u_a\in\{1,\infty\}$ as on a graph with arbitrary capacities, 
	which is NP-hard.
	Note that we can substitute the arcs in $G'$ with capacity $\infty$ 
	by arcs with capacity ${u_{max}=\max\{u_a:a\in A\}}$.
\end{proof}

\subsection{Proof of \texorpdfstring{\Cref{lem:SGM_integralSolution_NP}}{Theorem \ref{lem:SGM_integralSolution_NP}}}\label{prooflem:SGM_integralSolution_NP}
\begin{proof}
  	The proof is based on \cite[Theorem 8, proof]{disser}. 
	They set $\Gamma=2$ and construct a graph $G$ for integral \SPM ~from a given 
	\textsc{arc-disjoint paths} instance $G'$ 
	and show that an optimal solution to integral \SPM ~has an objective 
	value $\foptpm$ of at least $3$ if and only if there exist two arc-disjoint 
	paths in $G'$.
	Hence, it is NP-hard to distinguish instances of integral \SPM ~with optimal 
	value at least $3$ from those with optimal value at most $2$, which implies 
	hardness of the corresponding approximation for the problem. 
  
  \begin{figure}[h]
	\begin{tikzpicture}[scale=.7]
	\node(s) at (-5.25,-1.5) [circle,draw=black,thick,minimum size=8mm] {$s$};
	\node(v) at (-3.5,-0.5) [circle,draw=black,thick,minimum size=8mm] {$v$};
	\node(s1) at (-2,-1.5) [circle,draw=black,thick,minimum size=8mm] {$s_1$};
	\node(s2) at (-2,-3) [circle,draw=black,thick,minimum size=8mm] {$s_2$};
	\node(v2prime)at(-1.5,0.5)[circle,draw=black,thick,minimum size=8mm]{$v''$};
	\node(vprime)at(-1.5,1.75)[circle,draw=black,thick,minimum size=8mm]{$v'$};
	\node(t1) at (2.5,-3) [circle,draw=black, thick, minimum size=8mm] {$t_1$};
	\node(t2) at (2.5,-1.5) [circle,draw=black, thick, minimum size=8mm] {$t_2$};
	\node(w) at (4.25,-3.5) [circle,draw=black, thick, minimum size=8mm] {$w$};
	\node(t) at (5.25,-2) [circle,draw=black, thick, minimum size=8mm] {$t$};
	\draw[thick,->,bend left =0](s)edge node[below right]{$3$}(v);
	\draw[thick,->,bend left =30](vprime)edge node[above]{$2$}(t);
	\draw[thick,->,bend left =30](v2prime)edge node[above]{$2$}(t);
	\draw[thick,->,bend right=0](w) edge node[below right]{$2$}(t);
	\draw[thick,->,bend right=0](t1)edge node[above ]{$1$}(w);
	\draw[thick,->,bend right=40](s) edge node[above]{$1$}(w);
	\draw[thick,->,bend right=0](t2)edge node[above]{$1$}(t);
	\draw[thick,->,bend right=0](v)edge node[above]{$1$}(vprime);
	\draw[thick,->,bend right=0](v)edge node[below]{$1$}(v2prime);
	\draw[thick,->,bend left=35](s)edge node[above]{$1$}(v2prime);
	\draw[thick,->,bend left=35](s)edge node[above]{$1$}(vprime);
	\draw[thick,->,bend right=0](v)edge node[below left]{$1$}(s1);
	\draw[thick,->,bend right=0](s)edge node[below]{$1$}(s2);
	\draw[thick, dashed,decorate, decoration=
		{snake,amplitude=2mm,segment length=15mm,post length=3mm},->](s1)--(t1);
	\draw[thick,dashed,decorate, decoration=
		{snake,amplitude=2mm,segment length=18mm,post length=3mm},->](s2)--(t2);
	\draw[thick, dashed] (-2.68,-3.65) rectangle ++(5.83,2.80);
	\node[above] at (0.25, -1.75){$G'$};
	\end{tikzpicture}
	\vspace{-0.5cm} 
	\caption{Instance in the proof of \Cref{lem:SGM_integralSolution_NP}. 
		The arc labels denote the arc capacities. 
		This construction has originally been proposed in \cite{disser}.}
		\label{fig:integralDisser}
	\end{figure}
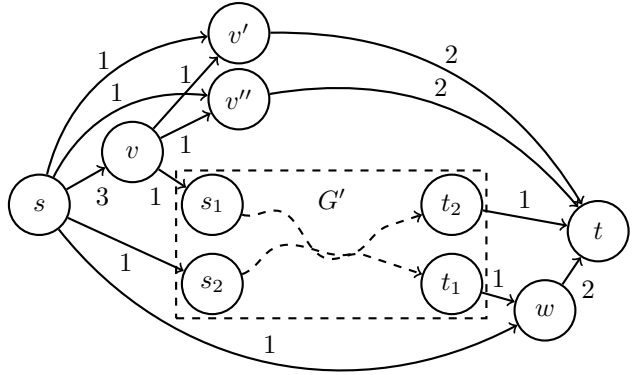
	
	Given an \textsc{arc-disjoint paths} instance $G'=(V',A')$, 
	we construct an instance $I=(G,\Gamma=2)$ for integral \SGM 
	~analogously to \cite[Theorem 8, proof]{disser}. 
	The resulting graph, cf. \Cref{fig:integralDisser}, consists of two parts:
	the inner part $G'$ with given start and end nodes $s_1, s_2, t_1, t_2$
	and the subgraph which we denote by $H$ induced by the added node set 
	$V_H=\{s,v,v',v'',w,t\}$. Note that for all nodes 
	$v \in V_H\setminus\{t\}$, we have $\dinA(v)\leq2=\Gamma$. 
	We now show that any optimal solution $\xoptpm$ to integral \SPM ~on the 
	constructed instance also solves integral \SGM ~to optimality. 
	Let $\xoptgm$ be an optimal solution to integral \SGM ~on the constructed 
	instance. 
	As $\dinA(v)\leq 2$ for all $v\in V_H\setminus\{t\}$, 
	it follows from \Cref{cor:static:aux-lemma} that all flow on subpaths in 
	$H$ can be deleted resulting in another optimal 
	solution to integral \SGM ~with flow only on entire \stpaths ~in~$H$. 
	
	Besides the subgraph $H$, we consider $G'$ and the subgraph 
	which we denote by $G'_{st}$ induced by the node set $V'_{st}\coloneqq V'\cup\{s,v,w,t\}$. 
	At most one unit of flow arrives at each of the nodes $s_1$ and $s_2$ in $G'$ 
	due to the capacities of the incoming arcs $(v,s_1)$ and $(s,s_2)$. 
	Such a flow unit cannot be distributed among different paths, 
	as an integral solution is required.
	This implies that there are for each node $v$ in $G'_{st}$ at most two 
	incoming paths carrying flow. 
	As $\Gamma=2$, it follows directly from \Cref{cor:static:aux-lemma} 
	that all flow on $s$-$v$-paths with $v\in V'_{st}\setminus\{t\}$ can be 
	deleted resulting in another optimal flow solution with flow only on 
	\stpaths.
	Thus, there is an optimal solution to integral \SGM ~which sends 
	flow only on \stpaths ~in $G$ and is hence feasible for integral \SPM.

	This concludes the proof: any optimal solution to the integral path model is 
	also optimal for the integral general model as there is a feasible solution 
	to the integral path model that solves the integral general model to 
	optimality and each solution to integral \SPM ~is feasible for integral \SGM.
	Now, we can use the result of \cite[Theorem 8, proof]{disser} to draw an 
	analogous conclusion for our general model: the optimal objective value 
	$\foptgm$ of integral \SGM ~is at most two if no two arc-disjoint paths 
	exist in $G'$ and is at least three if two arc-disjoint paths exist in $G'$.
	Deciding whether two such paths exist is NP-hard so that the claim follows. 
\end{proof}

\subsection{Proof of \texorpdfstring{\Cref{lemma:reducedflowproperty}}{Lemma \ref{lemma:reducedflowproperty}}}\label{prooflemma:reducedflowproperty}

\begin{proof}
  \newcommand{\tx}{\tilde{x}}
  Let $\tildex$ be a feasible solution to \SGM ~that does not satisfy 
  \eqref{eq:reducedflowproperty} for all nodes $v\in V\setminus\{s,t\}$. 
  Recall that we want to construct a feasible solution $x$ satisfying
  \begin{enumerate}
  	\item[(i)] ${\sum_{P:a\in \barP}x(P)\leq \sum_{P:a\in \barP}\tildex(P)}$
  	for all ${a\in A}$,
  	\item[(ii)] ${x(P)=\tildex(P)}$ for all ${P\in\dinPt}$, implying
  	  ${\fnom(x)=\fnom(\tildex)}$ and ${f_R(x)=f_R(\tildex)}$,
  \end{enumerate}
  and
  \begin{align}
  	\sum_{\substack{P\in\dinP(v):\\a\in P}}x(P)\leq \sum_{P\in\doutP(v)}x(P)
  	\quad\quad \forall a\in A
  	\tag{\ref{eq:reducedflowproperty}}
  \end{align}
  at every node $v\in V\setminus\{s,t\}$.
  
  As $G$ is a DAG, we can order the set of nodes 
  ${V=\{s=v_0,v_1,\dots,v_n,t=v_{n+1}\}}$ such that there is no directed 
  path in $G$ from $v_i$ to $v_j$ if $i>j$. 
  Additionally, there is a strict partial order 
  on the arcs such that $a=(v,w)\prec a'=(v',w')$ if $w=v'$ or if there is a 
  directed path from $w$ to $v'$ in $G$. 
  Let $\ell\in[n+1]$ be the minimal number such that there exists a feasible 
  solution that satisfies (i), (ii) and \eqref{eq:reducedflowproperty} at all 
  nodes $v_i$ with $i\in[n]$ and
  $i\geq \ell$, and we assume for contradiction that $\ell>1$. 

  We fix the node $v:=v_{\ell-1}$ for the remainder of this proof
  and introduce some further notation.
  Let $x$ be any feasible solution to \SGM.
  We denote the amount of incoming and outgoing flow at $v$ by 
  \begin{equation*}
  	f^-(x):=\sum_{P\in\dinP(v)}x(P) ~\text{ and }~ 
  	f^+(x):=\sum_{P\in\doutP(v)}x(P),
  \end{equation*}
  respectively.
  For an arc $a\in A$, we denote the set of paths that carry flow to $v$ 
  and use $a$ by 
  \begin{equation*}
  	\barP(x,a):=\{P\in\dinP(v)\defsep a\in P,~x(P)>0\}.
  \end{equation*}
  Additionally, for every arc we denote the flow ending at $v$ and using or not 
  using $a$ by
  \begin{align*}
  	y(x,a):=\sum_{P\in \barP(x,a)}x(P)
    	=\sum_{\substack{P\in\dinP(v):\\a\in P}}x(P) ~\text{ and }~ 
  	y^c(x,a):=\sum_{\substack{P\in\dinP(v):\\a\notin P}}x(P),
  \end{align*}
  respectively.
  The maximum of $y(x,a)$ over the arcs is denoted by $\bar{y}(x):=\max_{a\in A}y(x,a)$
  and we define the corresponding set of maximizing arcs by
  \begin{align*}
  	\bar{A}(x)&:=\{a\in A\defsep y(x,a)=\bar{y}(x)\}
  \end{align*}
  with $k(x):=|\bar{A}(x)|$. 
  With this notation, \eqref{eq:reducedflowproperty} reads
  \begin{align}\label{eq:reducedflowproperty-y}
  	\bar{y}(x)\leq f^+(x).
  \end{align}
  Robust flow conservation
  \eqref{static:general-rob-constraint} at $v$ reads 
  \begin{align}
    y^c(x,a)\geq f^+(x), \quad \forall a\in A,\label{app:robconservation}
  \end{align}
  since $\Gamma=1$. 
  Furthermore, we have 
  \begin{align}
    f^-(x)=y(x,a)+y^c(x,a),\quad \forall a\in A.\label{app:f-decomp}
  \end{align}

  Let $x'$ be one of the solutions that satisfy (i), (ii) and 
  \eqref{eq:reducedflowproperty} at the nodes $v_i$, $i\in[n]$ and $i\geq\ell$,
  and which, among them, minimizes the value
  \begin{align*}
    R(x'):=\sum_{a\in A}\max\{y(x',a)-f^+(x'), ~0\}>0.
  \end{align*}  
  
  We first show that it holds for $x'$  that
  \begin{itemize}
     \item[(a)] $f^-(x')=\bar{y}(x')+f^+(x')$ and
     \item[(b)] for all $a',a''\in\bar{A}(x')$, there exists a path $P\in\dinP(v)$ 
     with $x'(P)>0$ and $a',a''\in P$.
  \end{itemize}
  We arbitrarily choose an arc $a'\in\bar{A}(x')$.
  Due to the assumption that $x'$ minimizes $R(\cdot)$, we know that 
  modifying $x'$ only by reducing flow on paths in $\barP(x',a')$ results 
  in a flow that is infeasible in terms of robust flow conservation at $v$.
  The previous implies the following for every $P\in\barP(x',a')$: 
  there exists an arc $a''\notin P$ such that by reducing flow on $P$ by 
  any positive amount we would violate robust flow conservation 
  \eqref{app:robconservation} at $v$ for the scenario in which $a''$ fails.
  Hence, in the scenario in which $a''$ fails robust flow conservation is 
  fulfilled with equality, i.e. $y^c(x',a'')=f^+(x')$. 
  Now, we have
  $\bar{y}(x') = y(x', a')$ since $a'\in\bar{A}(x')$, $y^c(x',a')\geq f^+(x')$ 
  due to robust feasibility of $x'$, 
  the equality \eqref{app:f-decomp} for $a'$ and $a''$, 
  ${y(x', a'')\leq\bar{y}(x')}$ and $y^c(x',a'')=f^+(x')$.
  Combining this in the same order, we obtain
  \begin{align}\label{eq:equality_y_bar}
    \bar{y}(x')+f^+(x')\leq y(x',a')+y^c(x',a')=f^-(x')
      =y(x',a'')+y^c(x',a'')\leq \bar{y}(x')+f^+(x').
  \end{align}
  Thus, (a) holds. 
  
  Furthermore, it follows that $y(x',a'')=\bar{y}(x')$, i.e. $a''\in\bar{A}(x')$ 
  as well as $y^c(x',a')=f^+(x')$, i.e., robust flow conservation is satisfied 
  with equality in the scenario in which $a'$ fails. 
  Additionally, since \eqref{eq:reducedflowproperty-y} is not satisfied, the above 
  equations yield $y^c(x',a')=f^+(x')<y(x',a'')=\bar{y}(x')$. 
  Hence, there exists at least one path that ends at $v$, carries flow, and 
  contains $a'$ as well as $a''$ as otherwise $y^c(x',a')\geq y(x',a'')$ 
  would hold, which would contradict the former inequality. 
    
  Since $a'\in\bar{A}(x')$ has been chosen arbitrarily, 
  it follows that for any $a',a''\in\bar{A}(x')$ 
  there exists a path $P\in \dinP(v)$ with $a',a''\in P$ and $x'(P)>0$, 
  i.e. (b) holds.
  
  Note that for every feasible solution $x$ 
  with $x(P)=x'(P)$ for all $P\notin\dinP(v)$ and ${y(x,a)=y(x',a)}$ for all 
  $a\in A$
  the properties (a) and (b) hold and additionally $\bar{A}(x)=\bar{A}(x')$.\\
  Since $G$ is a DAG, property (b) implies that the strict partial order on $A$ induces 
  a strict total order on $\bar{A}(x')$ 
  and in particular that there exists a path $P'\in\dinP(v)$ such that 
  $\bar{A}(x')\subseteq P'$ and $y(x',a)>0$ for all $a\in P'$.

  There is a path decomposition $x_{pd}$ of $y(x',\cdot)$ with $x_{pd}(P'')>0$ 
  for some path $P''\in\dinP(v)$ with $P'\subseteq P''$ due to the following:
  the vector $y(x',\cdot)$ can be extended to a nominal $s$-$v$-flow with strict 
  flow conservation on an extended graph
  by introducing some artificial arcs $a_w=(s,w)$ for all $w\in V\setminus\{s,v,t\}$
  and defining 
  \begin{align*}
    y(x',a_w):= \sum_{a\in\doutA(w)}y(x',a)-\sum_{a\in\dinA(w)}y(x',a).
  \end{align*}
  Thus, on the extended graph there exists such a path decomposition which obviously
  induces a path decomposition of $y(x',\cdot)$ in $G$.  
  
  We now define a new feasible solution $x''$ to \SGM ~by
  \begin{align*}
    x''(P):=
      \begin{cases}
        x'(P),&P\notin\dinP(v),\\
        x_{pd}(P),&P\in\dinP(v).      
      \end{cases}
  \end{align*}
  This implies that $x''$ still satisfies (a) and that 
  $x''(P'')>0$ for the path $P''$ with $\bar{A}(x')=\bar{A}(x'')\subset P''$.
  Hence, we can reduce the flow on $P''$ by some positive amount $\epsilon>0$
  since for all $a\notin P''$ it holds that $y(x'',a)<\bar{y}(x'')$ 
  and thus $y^c(x'',a)>f^+(x'')$ due to (a) and \eqref{app:f-decomp}. 
  By this reduction we obtain a new feasible solution $x'''$ 
  which still satisfies (i), (ii) and 
  \eqref{eq:reducedflowproperty} at the nodes $v_i$, $i\in[n]$ and $i\geq\ell$.
  Additionally, due to $|\bar{A}(x')|=|\bar{A}(x'')| = k(x')$, we have
  \begin{align*}
    R(x''')\leq R(x')-k(x')\varepsilon < R(x'),
  \end{align*}   
  contradicting the assumption that $x'$ minimizes $R(\cdot)$.
  This contradicts the initial assumption $\ell>1$ so that
  there exists a solution $x$ satisfying (i), (ii), 
  and \eqref{eq:reducedflowproperty} at every node $v\in V\setminus\{s,t\}$.
\end{proof}

\subsection{Proof of \texorpdfstring{\Cref{Prop:gapNom_SPMNom}}{Proposition \ref{Prop:gapNom_SPMNom}}}\label{proofProp:gapNom_SPMNom}
\begin{proof}
	For an arbitrary $\Gamma\geq 2$, we construct an instance $I=(G,\Gamma)$ 
	that consists of four nodes $\{s,v_1,v_2,t\}$, 
	cf. \Cref{fig:gapNOM_SPMnom}.
	Let 
  	\begin{align*}
		\eta:=\frac{\Gamma(\alpha-2)+\alpha}{(\Gamma-1)(\alpha-2)}\in \Q.
	\end{align*}
  We note that $0<\eta\leq 1$.
	There are $\Gamma-1$ parallel arcs with capacity $1-\eta$ 
	from $s$ to $v_1$ $(a'_i)$, from $v_1$ to $v_2$ $(a''_i)$ and from 
	$v_2$ to $t$ $(a'''_i)$, where $i\in [\Gamma-1]$. 
	Besides, there are two additional arcs, $a'_{\Gamma}$ from $s$ to $v_1$ 
	and $a'''_{\Gamma}$ from $v_2$ to $t$ with unit capacity. 
	Furthermore, there is arc $a_1$ from $s$ to $v_2$ and arc $a_2$ from
	$v_1$ to $t$, each with capacity equal to $\Gamma-(\Gamma-1)\eta$.
  
	The nominal optimal  solution $\xoptnom$ fully utilizes the capacity of 
	all paths $P^2_i=\{a'_i,a_2\},~ i\in [\Gamma]$ and 
	$P^1_i=\{a_1,a'''_i\},~ i\in [\Gamma]$, resulting in an objective value 
	$\foptnom = 2(\Gamma-(\Gamma-1)\eta)$. 
	This is a nominal optimal flow solution, as 
	$\foptnom=\sum_{a\in\dinA(t)} u_a$. 
 
  	Next, we describe a robust optimal solution $\xoptpm$ to \SPM ~and prove 
  	optimality.
  	We set ${\xoptpm(P'_i)=1-\eta}$ for all paths $P'_i=\{a'_i,a''_i,a'''_i\},
  	~i\in [\Gamma-1]$. 
  	Besides, let $\xoptpm(P_1)=\xoptpm(P_2)=1$ for $P_1=\{a_1,a'''_{\Gamma}\}$ and 
  	$P_2=\{a'_{\Gamma},a_2\}$. 
  	In the worst case, the flow on the arcs $a_1,~a_2$ and 
  	$\Gamma-2$ of the arcs $a'''_i, ~i\in[\Gamma-1]$ is deleted, resulting in an optimal value 
  	of~$1-\eta$. 
    Since $\xoptpm$ saturates all arcs except $a_1$ and $a_2$, 
    an increase of the flow on one of these two arcs requires a decrease on one 
    of the paths $P'_i$ to satisfy the capacity constraints. 
  	However, the paths containing $a_1$ and $a_2$ carry the largest amounts of flow
  	and therefore are deleted in the original worst case scenarios 
  	and also the resulting worst case scenarios. 
  	Thus, such an increase on $a_1$ or $a_2$ would decrease the objective value 
  	and hence the above flow solution is robust optimal
  	with maximal nominal value. 
	The nominal value of this solution is $\foptpmnom=2+(\Gamma-1)(1-\eta)$.
  
 	Combining the above proves the first assertion since
  \begin{align*}
    	\frac{\foptnom}{\foptpmnom}
    	= \frac{2(\Gamma-(\Gamma-1)\eta)}{2+(\Gamma-1)(1-\eta)} 
    	=\frac{2(\Gamma-\Gamma-\frac{\alpha}{(\alpha-2)})}
    	  {1+\Gamma-\Gamma-\frac{\alpha}{(\alpha-2)}}
      = \frac{\frac{-2\alpha}{(\alpha-2)}}
    		{1-\frac{\alpha}{(\alpha-2)}}
    	= \alpha.
	\end{align*}
	We observe that $|\dinA(v)|\leq \Gamma$ holds for all $v\in V$. 
	Hence, it directly follows from \Cref{cor:static:aux-lemma} that 
	the optimal solution $\xoptpm$ to \SPM ~with maximal nominal value is
	also an optimal solution to \SGM ~maximizing the nominal value, yielding
	the second assertion.
  
	We further note that the constructed instance contains arcs with non-integer 
	capacities. 
	However, since $\alpha$ and therefore $\eta$ are rational, the capacities
	are all rational and we can w.l.o.g. scale the instance to an instance with 
	integer capacities.
	\begin{figure}[h]
    \begin{tikzpicture}[scale=.9]
      \node(s) at (-3.5,0) [circle,draw=black, thick, minimum size=8mm]{$s$};
      \node(v1) at (0,1.7) [circle,draw=black, thick, minimum size=8mm]{$v_1$};
      \node(v2) at (3.5,0)[circle,draw=black,thick,minimum size=8mm]{$v_2$};
      \node(t) at (7,1.7) [circle,draw=black, thick, minimum size=8mm] {$t$};
      \node at (-1.68,0.75)[fill, circle, inner sep = 0.5pt] {};
      \node at (-1.75,0.85)[fill,circle,inner sep=0.5pt]{};
      \node at (-1.82,0.95)[fill, circle, inner sep = 0.5pt] {};
      \node at (1.68,0.75)[fill, circle, inner sep = 0.5pt] {};
      \node at (1.75,0.85)[fill,circle,inner sep=0.5pt]{};
      \node at (1.82,0.95)[fill, circle, inner sep = 0.5pt] {};
      \node at (5.18,0.95)[fill, circle, inner sep = 0.5pt] {};
      \node at (5.25,0.85)[fill,circle,inner sep=0.5pt]{};
      \node at (5.32,0.75)[fill, circle, inner sep = 0.5pt] {};
      \path[thick,->,bend left=40]([xshift=0.05cm]s.north)
      			edge node[above,rotate=33]{$\left(a'_{\Gamma},1\right)\hs{2em}$}
      			([xshift=-0.25cm]v1.north);
      \path[thick,->]([yshift=0.25cm]s.east)edge node[above,rotate=33]
      			{$\left(a'_{\Gamma-1},1-\eta\right)\hs{0.85em}$}
      			([yshift=0.25cm]v1.west);
      \path[thick,->]([yshift=-0.25cm]s.east)edge node[below,rotate=33]
      			{$\hs{1.6em}\left(a'_1,1-\eta\right)$}
      			([yshift=-0.25cm]v1.west);
      \path[thick,->]([xshift=0.25cm]s.south)edge node[below]
      			{$\left(a_1,\Gamma-(\Gamma-1)\eta\right)$}
      			([xshift=-0.25cm]v2.south);
      \path[thick,->]([xshift=0.25cm]v1.north)edge node[above]
      			{$\left(a_2,\Gamma-(\Gamma-1)\eta\right)$}
      			([xshift=-0.25cm]t.north);
      \path[thick,->]([yshift=0.25cm]v1.east)edge node[above,rotate=-33]
      			{$\hs{1em}\left(a''_1,1-\eta\right)$}
      			([yshift=0.25cm]v2.west);
      \path[thick,->]([yshift=-0.25cm]v1.east)edge node[below,rotate=-33]
      			{$\left(a''_{\Gamma-1},1-\eta\right)\hs{1.3em}$}
      			([yshift=-0.25cm]v2.west);
      \path[thick,->]([yshift=0.25cm]v2.east)edge node[above,rotate=33]
      			{$\hs{0.4em}\left(a'''_1,1-\eta\right)$}
      			([yshift=0.25cm]t.west);
      \path[thick,->]([yshift=-0.25cm]v2.east)edge node[below,rotate=33]
      			{$\hs{0.85em}\left(a'''_{\Gamma-1},1-\eta\right)$}
      			([yshift=-0.25cm]t.west);  
      \path[thick,->,bend right=40]([xshift=0.25cm]v2.south)
      		edge node[below,rotate=33]{$\hs{0.3em}\left(a'''_{\Gamma},1\right)$}
      			([xshift=-0.05cm]t.south);	
    \end{tikzpicture}
    \caption{Instance with $\foptnom\hs{-0.2em}\geq\hs{-0.2em}\foptpmnom$ 
    	and $\foptnom\hs{-0.2em}\geq\hs{-0.2em}\foptgmnom$  
    	in the proof of Prop.~\ref{Prop:gapNom_SPMNom}.}
    \label{fig:gapNOM_SPMnom}
	\end{figure}
\end{proof}

\section{Proofs of \texorpdfstring{\Cref{sec:dynamic}}{Section \ref{sec:dynamic}}}
\subsection{Proof of \texorpdfstring{\Cref{lem:DAM/DGM:pw-constant}}{Proposition \ref{lem:DAM/DGM:pw-constant}}}\label{prooflem:DAM/DGM:pw-constant}
\begin{proof}
	The proof works analogously to the proof of 
	\cite[Proposition~1]{gottschalk}.
	
	(i)	Let $\tildex$ be an optimal solution to the continuous 
	dynamic general model. 
	In order to 
	generate a piecewise constant function from the solution $\tildex$ for each 
	path $P\in\barP$, the time interval $[0,T)$ is cut into unit intervals 
	$[b-1,b)~\forall b \in \T$. 
	Define $x(P,\theta) \fa P\in\barP$ as follows:
	\begin{displaymath}
	x(P,\theta):=\left\{\begin{array}{ll}\int\limits_{b-1}^{b}\tildex(P,t)~dt, 
	&\theta \in [b-1,b), b\in\T\\ 0,&\text{else}. 
	\end{array}\right. 
	\end{displaymath}
	By definition, the objective values of $\tildex$ and $x$ are equal and the 
	non-negativity constraint is satisfied.
	We show by contradiction that $x$ fulfills the capacity constraint. 
	If the amount of flow on an arbitrary arc $a$, $\sum_{{P\in\barP: a\in P}} 
	x(P,\theta_{z,a}(P))$, violates the capacity constraint 
	for any point in time $\theta \in [b-1,b)$, then 
	it violates the capacity constraint  for all 
	$\theta \in [b-1,b)$ due to the integrality of $\tau_a$ and $\Delta \tau_a$.
	As for each path $P\in\barP$, the flow $x(P,\theta)$ is defined as 
	integral of $\tildex$ in the time 
	interval $[b-1,b)$ of length $1$, there is at least one point in time,
	$\theta \in [b-1,b)$, such that $\tildex(P,\theta)$ is
	greater than or equal to $x$. 
	Hence, the capacity constraint would be violated by $\tildex$ as well. 
	As a consequence, since the capacity constraint is fulfilled by 
	$\tildex$ for all points in time, it is also satisfied by $x$.
	Analogous arguments can be employed in order to show that the piecewise 
	constant solution $x$ fulfills robust flow conservation.
	
	(ii) The proof can be conducted analogously to the proof of 
	part (i). 
\end{proof}

\subsection{Proof of \texorpdfstring{\Cref{thm:DAM:reform}}{Theorem \ref{thm:DAM:reform}}}\label{proofthm:DAM:reform}
\begin{proof}
	Every robust feasible solution $x$ to \DAM ~satisfies 
	\eqref{dynamic:arc:rob-constraint}, which can be equivalently reformulated to
	\begin{equation}\label{thm:DAM:xfeasible1}
		\min_{z\in\Lambda}
			\left\{\sum_{a\in\dinA(v)}x(a,\theta-\tau_a-z(a)\Delta\tau_a)\right\}
    		\geq\sum_{a\in\doutA(v)} x(a,\theta), \quad
        	\forall v\in V \setminus\{s,t\}, \theta\in \T.
	\end{equation}
	For any $a\in A$, $z\in\Lambda$ and $\theta\in\T$, it holds that 
	\begin{align*}
	  x(a,\theta-\tau_a-z(a)\Delta\tau_a)
	    =x(a,\theta-\tau_a)-
	    z(a)(x(a,\theta-\tau_a)-x(a,\theta-\tau_a-\Delta\tau_a)).
	\end{align*}
	Thus, constraint \eqref{thm:DAM:xfeasible1} is equivalent to
	\begin{equation}\label{thm:DAM:xfeasible}
		\sum_{a \in \dinA(v)}x(a,\theta-\tau_a)-y_{v,\theta}(x)\geq 
			\sum_{a\in\doutA(v)} x(a,\theta), \qquad \qquad \qquad
			\forall v\in V\setminus \{s,t\}, \theta \in \T,
	\end{equation}
	with 
  	\vspace{-0.3cm}
	\begin{align*}
       y_{v,\theta}(x):=
        	&\max_{z_{v,\theta}\in\{0,1\}^{|\dinA(v)|}} \sum_{a\in\dinA(v)}
        		z_{v,\theta}(a)~(x(a,\theta-\tau_a)-x(a,\theta-\tau_a-
        		\Delta\tau_a))\\[3pt]
        	&\qquad\quad \st\qquad \sum_{a\in\dinA(v)}z_{v,\theta}(a)\leq\Gamma.
  	\end{align*}
  Since $x$ is a fixed parameter in this maximization problem, 
  ${z_{v,\theta}\in\{0,1\}^{|\dinA(v)|}}$ can be relaxed to 
  ${z_{v,\theta}\in[0,1]^{|\dinA(v)|}}$
  without changing the objective value $y_{v,\theta}(x)$. 
  Hence, this relaxed problem can be dualized and we obtain
  \begin{subequations}\label{thm:DAM:yvtheta}
    \begin{align}
    	y_{v,\theta}(x)=~ \min_{\eta,\lambda} \quad
    	  &\sum_{a\in\dinA(v)}\lambda_{\theta}(a)+\Gamma\eta_{v,\theta}\\[5pt]
        \st \quad &\eta_{v,\theta}+\lambda_{\theta}(a) \geq x(a,\theta-\tau_a) 
        -x(a,\theta-\tau_a-\Delta\tau_a), \quad &\forall a\in\dinA(v),\\
      &~\lambda_{\theta}(a) \geq 0, \quad &\forall a\in\dinA(v),\\
      &~\eta_{v,\theta} \geq 0.
    \end{align}
  \end{subequations}

	Similarly, for a feasible and fixed $x$ we can reformulate 
	the inner optimization problem of the objective function of \DAM, i.e.
	\begin{align}
	\min_{z\in\Lambda} \sum_{a\in\dinA(t)}
		\sum_{\theta\in [T-\tau_a-z(a)\Delta\tau_a]}x(a,\theta)
     ~=~ \sum_{a\in\dinA(t)}\sum_{\theta\in\T}~x(a,\theta-\tau_a)-y_t(x)	
			  \label{thm:DAM:xoptimal}	 
  \end{align}
  with 
  \vspace{-0.4cm}
  \begin{align*}
        y_t(x):= 
        	&\max_{z\in\{0,1\}^{|\dinA(t)|}} \sum_{a\in\dinA(t)}z(a) 
        		\left(\sum_{i\in [\Delta\tau_a]}x(a,T-\tau_a-(i-1))\right)\\[3pt]
        	& \qquad \st \qquad \sum_{a\in\dinA(t)}z(a)\leq\Gamma.
  \end{align*}
  Again, for the fixed $x$, the integrality of $z$ can be relaxed to 
  ${z\in[0,1]^{|\dinA(t)|}}$. Dualizing the relaxed problem gives
  \begin{subequations}\label{thm:DAM:yt}
  \begin{align}
    y_t(x)=~ &\min_{\mu,\nu} 
      \sum_{a\in\dinA(t)}\nu(a) + \Gamma \mu \\
    & \st \quad \mu+\nu(a)\geq 
      \sum_{i\in [\Delta\tau_a]}x(a,T-\tau_a-(i-1)),\quad 
      &\forall a\in\dinA(t),\\
    & \qquad \quad~ \nu(a) \geq 0, \qquad &\forall a\in\dinA(t), \\
    & \qquad \quad~ \mu \geq 0.
    \end{align}
  \end{subequations}
    
  A substitution of \eqref{thm:DAM:yvtheta} into \eqref{thm:DAM:xfeasible} 
  and \eqref{thm:DAM:yt} into the right hand side of \eqref{thm:DAM:xoptimal} 
  gives problem~\eqref{dynamic:arc-reform}. 
  This is a linear program with a polynomial number of constraints and a 
  polynomial number of variables, which concludes the proof.
\end{proof}

\end{document}